\documentclass[11pt,a4paper]{article}
\addtolength{\textheight}{60pt} \addtolength{\topmargin}{-30pt}
\textwidth 164mm \oddsidemargin -2.25mm


\usepackage{authblk}
\usepackage{graphicx}
\usepackage{amssymb}
\usepackage{amsmath}
\usepackage{amsthm}
\usepackage{booktabs}
\usepackage{paralist}
\usepackage{bm}
\usepackage{url}
\usepackage{color}
\usepackage{mathtools}
\usepackage{enumitem}
\usepackage{algorithm2e}
\usepackage{pgfplots}
\usepackage{array}
\usepackage{multirow}

\usepackage{lmodern}

\usepackage{todonotes}
\mathtoolsset{showonlyrefs}
\usepackage[justification=centering]{caption}
\usepackage{subcaption}
\usetikzlibrary{patterns}
\usepackage{tikz-network}

\usetikzlibrary{decorations.pathreplacing}

 \usepackage[breaklinks,bookmarks = false]{hyperref}
 \hypersetup{colorlinks, linkcolor = blue, citecolor = blue,
 	urlcolor = blue, pagecolor = blue, plainpages = false, pdfwindowui = false}

\tabcolsep 2.5pt


\newtheorem{lemma}{Lemma}[section]
\newtheorem{corollary}[lemma]{Corollary}
\newtheorem{proposition}[lemma]{Proposition}
\newtheorem{assumption}[lemma]{Assumption}
\newtheorem{theorem}[lemma]{Theorem}
\newtheorem{definition}[lemma]{Definition}
\newtheorem{remark}[lemma]{Remark}



\newcommand\Om{\Omega}
\newcommand\GD{{\Gamma_{\mathrm D}}} 
\newcommand\GN{{\Gamma_{\mathrm N}}} 


\newcommand\Ho{H^1(\Om)}
\newcommand\Hoo{H^1_0(\Om)}
\newcommand\HoD{H^1_{0,\mathrm{D}}(\Om)}

\newcommand\Hmo{H^{-1}(\Om)}



\newcommand\Th{\mathcal{T}_h}



\newcommand\tx{{\bm{x}}}


\newcommand\tb{\bm{b}}

\newcommand\tr{\bm{r}}

\newcommand\tv{\bm{v}}

\newcommand\tA{\bm{A}}

\newcommand\tG{\bm{G}}

\newcommand\tW{\bm{W}}


\newcommand{\bal}{{\bm{\alpha}}}


\newcommand\I{\mathcal{I}}
\newcommand\J{\mathcal{J}}

\newcommand\LL{\mathcal{L}}

\newcommand\N{\mathcal{N}}

\newcommand\R{\mathcal{R}}

\newcommand\V{\mathcal{V}}


\newcommand\RR{{\mathbb R}}
\newcommand\NN{{\mathbb N}}


\newcommand\dx{\, \mathrm{d} \tx}

\newcommand\dxx{\, \mathrm{d} x}

\definecolor{mycolor1}{RGB}{168,220,232}
\definecolor{mycolor2}{RGB}{168,207,184}
\definecolor{mycolor3}{RGB}{168,168,245}
\definecolor{mycolor4}{RGB}{220,168,232}


\title{Adaptive Deep Fourier Residual method via overlapping domain decomposition}

\author[1]{Jamie M. Taylor} 
\author[2]{Manuela Bastidas}
\author[3]{Victor M. Calo}
\author[2,4,5]{David Pardo}

\affil[1]{CUNEF Universidad, Madrid, Spain. \protect\\ \texttt{jamie.taylor@cunef.edu}}
\affil[2]{University of the Basque Country (UPV/EHU), Leioa, Spain.}
\affil[3]{Curtin University, Perth, WA, Australia.}
\affil[4]{Basque Center for Applied Mathematics (BCAM), Bilbao, Spain.}
\affil[5]{Ikerbasque (Basque Foundation For Sciences), Bilbao, Spain.}

\graphicspath{{Figures/}}

 \begin{document}
 \maketitle

 
 \begin{abstract}
	The Deep Fourier Residual (DFR) method is a specific type of variational physics-informed neural networks (VPINNs). It provides a robust neural network-based solution to partial differential equations (PDEs). The DFR strategy is based on approximating the dual norm of the weak residual of a PDE. This is equivalent to minimizing the energy norm of the error. To compute the dual of the weak residual norm, the DFR method employs an orthonormal spectral basis of the test space, which is known for rectangles or cuboids for multiple function spaces.
	
	In this work, we extend the DFR method with ideas of traditional domain decomposition (DD). This enables two improvements: (a) to solve problems in more general polygonal domains, and (b) to develop an adaptive refinement technique in the test space using a Döfler marking algorithm. In the former case, we show that under non-restrictive assumptions we retain the desirable equivalence between the employed loss function and the $H^1$-error, numerically demonstrating adherence to explicit bounds in the case of the L-shaped domain problem. In the latter, we show how refinement strategies lead to potentially significant improvements against a reference, classical DFR implementation with a test function space of significantly lower dimensionality, allowing us to better approximate singular solutions at a more reasonable computational cost.
	
 \end{abstract}

 \section{Introduction}
 
The design of an accurate scheme for solving partial differential equations (PDEs) with machine learning techniques, explicitly using artificial neural network models (NNs)~\cite{ lagaris1998artificial}, requires a proper NN architecture, optimization method, and loss function to minimize. When it comes to the latter aspect, determining a suitable loss function is often complicated, and different machine learning models used for solving PDEs employ distinct loss functions. For instance, in methods like PINNs (Physics-Informed Neural Networks)~\cite{ jagtap2020conservative, karniadakis2021physics, raissi2019physics, sirignano2018dgm}, the definition of the loss function relies on the strong formulation of the PDE. In contrast, in other methods like VPINNs, (Variational Physics-Informed Neural Networks)~\cite{ kharazmi2019vpinns, kharazmi2021hp, khodayi2020varnet, khodayi2020deep}, the loss function uses the weak formulation. All these strategies have their challenges; for example, the methods based on the strong formulation of the PDE, such as PINNs, fail when dealing with PDEs that inherently yield low-regularity solutions, and in the classical VPINNs, the PDE residual norms employed as a loss function are not consistent with the dual norm of the error; thus, their accuracy highly depends on the choice of the set of functions in the test space~\cite{ badia2024finite, berrone2022solving, berrone2022variational}. 
As a remedy, the authors in~\cite{ rojas2023robust} detail a theoretical framework in which an appropriate loss function for a VPINN-type model is a well-defined approximation of the dual norm of the PDE's weak residual. Specifically, many classical PDEs can be reformulated in a weak form, casting them in terms of the weak residual $\R: H \to H^*$, where $H$ is a Hilbert space and $H^*$ is the corresponding dual space. In fact, for many well-posed problems, like the classical Poisson and time-harmonic Maxwell's equations, the dual norm of the weak residual is a good estimate of the error's norm. In this sense, if $u^*$ is the exact solution of the PDE, then there exist constants $0 < \gamma < M$ such that 
\begin{equation}\label{eqUpLowerBd}
	\frac{1}{M} \| \R(u) \|_{H^*} \leq \|u- u^*\|_H \leq \frac{1}{\gamma} \| \R(u) \|_{H^*},
\end{equation}
where $u^*$ is the exact solution and $u\in H$ is arbitrary. In~\cite{ rojas2023robust}, it is shown that the choice of the loss function as an approximation of $\| \R(u) \|_{H^*}$ in a finite-dimensional subspace of $H^*$ leads to a Robust version of Variational Physics-Informed Neural Networks (RVPINNs), in which the discrete loss is correlated with the $H$-norm of the error up to an oscillation term.

The Deep Fourier Residual (DFR) method proposed in~\cite{ Manuela2023deep, taylor2023deep} is a particular case of  RVPINNs, in which the approximation of the dual norm of the weak residual $\| \R(u) \|_{H^*}$ is calculated via a spectral representation of the residual. The advantage of the DFR method is that it avoids the primary bottleneck of RVPINNs by eliminating the need to compute (and invert) the Gram matrix corresponding to the chosen basis of the finite-dimensional test space. The loss computation heavily relies on the knowledge of an orthonormal basis for the space $H$. In~\cite{ Manuela2023deep, taylor2023deep}, the basis functions are given as eigenvectors of the Laplacian, which are generally unknown/not available, with certain exceptions in simple geometries such as $n$-rectangular domains (i.e., rectangles in 2D or rectangular cuboids in 3D). Indeed, the construction of such a basis is non-trivial for general domains. 

Here, we present an approach that can be used beyond the setting of $n$-rectangular domains. Based on the ideas of traditional domain decomposition (DD) methods~\cite{ chan1994domain, dolean2015introduction, toselli2004domain}, we represent the PDE's domain $\Om$ by a finite union of overlapping $n$-rectangles. Through this construction, we define an auxiliary norm, the $\star$-norm, that approximates the dual norm of the weak residual and yields estimates (i.e.,~\eqref{eqUpLowerBd}) based on reasonable assumptions. We apply the DD over the \textit{test} space rather than over the \textit{trial} space. As a result, we do not iterate between subdomains or use transmission conditions to solve subproblems, unlike other PINN-like techniques that have used the ideas of DD methods~\cite{ heinlein2021combining, moseley2023finite, shukla2021parallel}. 

By partitioning the subdomain $\Om$ as an overlapping union of $n$-rectangles, we can also address PDEs with solutions with singularities or discontinuities due to factors like non-smooth parameters or localized sources. In those cases, the classic DFR method fails to provide accurate solutions because it requires many Fourier modes with support over the entire domain to represent phenomena that may occur only locally. However, by developing a subdomain decomposition of $\Om$, we overcome this problem and focus on the regions of the domain where errors are both local and high frequency (singularities, interfaces, etc.). We propose using an error indicator to drive local subdomain refinements that captures such errors. Our error indicator is a local contribution of the loss function making the training and refinement consistent, improving the structure of the initial subdomain decomposition of $\Om$, and reducing errors in regions with potential singularities, high gradients in the solution or high-frequency errors. Similar strategies for developing VPINNs combined with domain decomposition were proposed in~\cite{kharazmi2021hp}. Our contribution aligns with~\cite{kharazmi2021hp} in that adaptivity is performed over the test space. However, unlike in~\cite{kharazmi2021hp}, herein we consider spectral test functions; the residual is measured in a new $\star$-norm that we introduce in Section~\ref{sec:starnorm}, and we have overlapping subdomains. Additionally, in our implementations, we accelerate the training process by using a hybrid optimization method, where the weights in the last layer of the neural networks are found in one iteration by solving a least-squares (LS) system of equations~\cite{ cyr2020robust, huang2006extreme}. 

Integration errors are commonly encountered in PINN-related methodologies and usually lead to slow convergence of the optimization algorithm and inaccurate or unreliable values of the loss function~\cite{ weinan2021algorithms, rivera2022quadrature, wu2023comprehensive}. In the numerical examples with singularities, addressing the numerical integration problems becomes a sensitive matter. Thus, we employ overkill mesh-integration techniques, and we leave the design of more sophisticated quadrature rules as future work (see~\cite{ magueresse2023adaptive, wu2023comprehensive} as possible avenues to improve quadrature).

We structure this work as follows: Section~\ref{sec:preliminaries} describes the notation and the definition of the weak-form residual for problems with solutions in the space $\Ho$. Sections~\ref{sec:DFR0},~\ref{sec:discLoss} and~\ref{sec:optimis} detail the structure of the proposed NN models, the framework of the DFR method on rectangular domains and the optimization method. Section~\ref{sec:DFRDDteo} presents the theoretical basis for using domain decomposition ideas at the core of the DFR method. In Section~\ref{sec:extension}, we extend the DFR method to polygonal domains and in Section~\ref{sec:localRef}, we propose using local refinements to improve the method's accuracy on specific regions of $\Om$. In Section~\ref{sec:num}, we demonstrate the capabilities of the DFR method in 1D and 2D, solving problems with low-regularity solutions. Section~\ref{sec:conc} concludes and discusses future work. In the appendices~\ref{app:2},~\ref{app:1} and~\ref{app:3}, we outline some of the technical results related to norms we employ in our work.

\section{Variational formulation}\label{sec:preliminaries}

For general domains $\Om \subset \RR^n$ with $n =1, 2,$ or $3$ and boundary $\partial \Om$, we denote by $L^2(\Om)$ the space of squared-integrable real-valued functions equipped with the usual norm. For a general Hilbert space $H$, $(\cdot, \cdot)_H$ represents the inner product, the corresponding norm is $ \| v \| _H^2 := (v, v)_H$, and $\langle \cdot, \cdot \rangle_{H^*\times H}$ denotes the duality pairing between elements in $H$ and its dual space $H^*$.

The space $\Ho$ is the space of $L^2(\Om)$ functions having weak derivatives in $L^2(\Om)$. If the boundary of $\Om$ is Lipschitz and $\GD,\GN$ are disjoint subsets of $\partial\Om$ such that $\partial\Om=\overline{\GD}\cup\overline{\GN}$, there exist trace operators $\gamma_0(u) = u|_{\partial \Om}$ and $\gamma_\mathrm{D}(u) = u|_{\GD}$ of $u\in \Ho$ that allow these space definitions
\begin{equation}
		\Hoo :=  \left\lbrace u \in \Ho: \, \gamma_0(u) = 0 \right\rbrace, \quad \text{and} \quad
		\HoD :=  \left\lbrace u \in \Ho: \, \gamma_\mathrm{D}(u) = 0 \right\rbrace.
\end{equation}
We denote $\Hmo$ the dual space of $\Hoo$ and $H^{*}_{\mathrm{D}}(\Om)$ the dual space of $\HoD$. 
 


For the sake of simplicity, we concentrate our attention on general PDEs with solutions in the space $\HoD$. An example of such PDE is Poisson's equation that can be written in weak form as: find $u\in \HoD$ satisfying
\begin{equation}\label{eq:probpois}
	\int_\Om \nabla u\cdot \nabla v+f \cdot v\dx -\int_{\GN}g_u \cdot v \dx = 0 \qquad \forall v\in \HoD,
\end{equation}
where $f\in L^2(\Om)$ and $g_u$ is the Neumann boundary data. 

In general and in terms of operators, solving the weak formulation of a PDE with solution in the space $\HoD$ reduces to finding $u \in  \HoD$ such that
\begin{equation}\label{eq:res0}
	\langle\R(u),v\rangle_{H^{*}_\mathrm{D}\times H^1_{0,\mathrm{D}}} = b(u,v)-\ell(v) = 0 \qquad \forall v \in \HoD,
\end{equation} 
where $\R:\HoD \to H^{*}_\mathrm{D}$ is the weak residual operator, $b: \HoD\times \HoD \to \RR$ is a bilinear form and $\ell \in H^{*}_\mathrm{D}$.  Moreover, if $u^*$ is the exact solution of Eq.~\eqref{eq:res0} and the bilinear form $b$ satisfies the continuity condition
\begin{equation}\label{eqContLM1}
	|b(u,v)| \leq M \| u \| _{\HoD} \| v \| _{\HoD}  \qquad \forall u,v \in \HoD,
\end{equation}
and the inf-sup stability condition 
\begin{equation}
		\inf\limits_{u\in \HoD\setminus \{0\}} \, \sup\limits_{v\in \HoD\setminus \{0\}}\frac{|b(u,v)|}{ \| u \| _{\HoD} \| v \| _{\HoD}}\geq \gamma,
\end{equation}
with $0<\gamma<M$, one may estimate the error using the dual norm of the residual via Eq.~\eqref{eqUpLowerBd} (for detailed proofs of these bounds see, e.g.,~\cite{ ciarlet2002finite, ciarlet2013linear}).

Now, from Eq.~\eqref{eqUpLowerBd}, we obtain that the dual norm of the residual is equivalent to the norm of the error. Therefore, if one aims to minimize the error, the dual norm of the residual is an attractive choice of minimization target (loss function).

\section{Discretization over the trial space using Neural Networks}\label{sec:DFR0}

A fully-connected feed-forward neural network is a mathematical model that consists of a collection of interconnected nodes. Each node is defined by a non-linear map applied to an affine combination of inputs $\tx \in \RR^n$ to certain outputs. The nodes are arranged in ($L$) layers, and the output of each layer can be described as
\begin{equation}\label{eq:layerspar}
	\N_j = \sigma_j(\tW_j\N_{j-1} + \tb_j ) \quad \text{ for } j \in \{ 1 ,\dots, L \},
\end{equation}
where $\sigma_j$ represents non-linear activation functions for $j \in \{1,\dots, L\text{-}1\}$, and $\sigma_L$ is the identity function. We take $\N_{0} = \tx$ and  $\tb_L=\mathbf{0}$ and denote our candidate solution $\tilde{u} := \N_L$. The parameters of the neural network are the weights $\tW_j \in \RR^{d_j \times d_{j-1}}$ and the biases $\tb_j \in \RR^{d_j}$, with $d_j$ being the number of nodes on each layer. The weights and biases are determined during a training process, which involves the application of a gradient-based optimization algorithm to minimize a specific discretized loss function, denoted as $\LL(u)$. Both the loss function and the optimization method are explained below.

When imposing homogeneous Dirichlet boundary conditions on our candidate solutions, we use a non-trainable cut-off function $\chi: \overline{\Om} \to \RR$. If $\tilde{u}$ is the output of the fully connected feed-forward neural network, then our final output is $u=\chi\tilde{u}$ where $\chi$ is a function satisfying $\chi|_{\Gamma_D}=0$ and $\chi>0$ on $\bar{\Om}\setminus\Gamma_D$.

\section{Discretization over the test space}\label{sec:discLoss}

Given that the dual norm of the residual is an estimator of the energy norm of the error, as Eq.~\eqref{eqUpLowerBd} shows, our primary goal is to choose $\LL(u)$ such that it accurately approximates $ \| {\R(u)} \|_{H^{*}_\mathrm{D}}$. First, we define $\V$, a finite-dimensional subspace of $\HoD$, as the span of some basis functions $\{\Phi_k\}_{k\in\J}$, with $\J$ being a finite set of (sorted) indices. 

From the Riesz representation theorem~\cite{ oden2017applied}, we obtain that for each $\R(u) \in H^{*}_\mathrm{D}$ there exists a corresponding $u_{\R} \in \HoD$, known as the \emph{Riesz representative} of $\R(u)$, such that 
\[ (u_{\R} ,v)_{\HoD}= \langle \R(u), v \rangle_{H^{*}_\mathrm{D}\times H^1_{0,\mathrm{D}}}, \, \forall v\in \HoD,\]
and \[ \| {\R(u)} \| _{H^{*}_\mathrm{D}(\Om)} = \| u_{\R} \| _{\HoD}.\]

Next, we project the Riesz representative of $\R(u)$ onto $\V$ and define $\hat{u}_{\R} := \sum\limits_{k\in\J} \alpha_k(u) \Phi_k$, which is the unique element in $\V$ satisfying
\begin{equation}\label{eq:rieszrep}
	(\hat{u}_{\R},\Phi_k)_{\HoD} = 	\langle\R(u),\Phi_k \rangle_{H^{*}_\mathrm{D}\times H^1_{0,\mathrm{D}}}  \quad \forall k \in \J.
\end{equation} 

This equation can be expressed in vectorial form as follows:
\begin{equation}\label{eq:vects}
	\tG \bal(u) = \tr(u),
\end{equation} 
where $\tG := [(\Phi_i, \Phi_j)_{\HoD}]_{i,j \in \J}$ is the Gram matrix, $\bal(u) := [\alpha_j(u)]_{j \in \J}$ is the vector of unknown coefficients, and $\tr(u) := [\langle\R(u),\Phi_j \rangle_{H^{*}_\mathrm{D}\times H^1_{0,\mathrm{D}}}]_{j \in \J}$. Consequently, we have
\begin{equation}\label{eq:gram}
	 \| {\R(u)} \| _{H^{*}_\mathrm{D}(\Om)}^2 \approx \| \hat{u}_{\R} \|_{\HoD}^2 = \bal(u)^T \tG \bal(u) = 
	\tr(u)^T \tG^{-1} \tr(u)
\end{equation} 
where the quality of the approximation depends on the capacity of the space $\V$ to approximate $\hat{u}_{\R}$.

Therefore, an appropriate choice of a discrete loss function is
\begin{equation}\label{eq:losseq0}
	\LL(u) := 	\tr(u)^T \tG^{-1} \tr(u). 
\end{equation}

We refer to~\cite{ rojas2023robust} for more details on the construction and the analysis of the loss function~\eqref{eq:losseq0}. The computation of Eq.~\eqref{eq:losseq0} requires the (off-line) construction and inversion of the Gram matrix, which can be computationally expensive and numerically unstable for certain choices of the subspace $\V$ and its basis functions. We address this computational challenge by concentrating on the ideas of the DFR method applied to rectangular domains, in which the basis is orthonormal, and the inversion of the Gram matrix is trivial. The construction of such a basis is described in~\cite{ taylor2023deep} in the case of solving problems in the Hilbert space $\HoD$ and in~\cite{ Manuela2023deep} for the case of solving the time-harmonic Maxwell's equations. Section~\ref{sec:dfrrect} summarizes this construction for $\HoD$ on rectangular domains, while Section~\ref{sec:extension} extends it to the case of polygonal domains.

\subsection{The DFR method on rectangular domains}\label{sec:dfrrect}

Let $\Om$ be a rectangular subset of $\RR^n$ (referred to as an $n$-rectangular domain), which can be expressed as the Cartesian product of $n$ intervals $\Om := \prod\limits_{j=1}^n(a_j,b_j)$ (or its rotations). In this case, a well-known set of basis functions for the space $\HoD$ is built using the eigenvectors of $(1-\Delta)$ (a classical construction described in~\cite{ davies1995spectral, taylor2023deep}). Specifically, the orthonormal basis functions of $\HoD$ in $n$-dimensions are the tensor products of the following functions in 1D:
\begin{equation}\label{eq:basis}
		\Phi_k = \frac{\varphi_k}{\| \varphi_k \|_{\HoD}},  \text{ with } \, {\varphi}_k(x) = \sqrt{\frac{2}{b-a}} \left\{\begin{array}{l l}
			\sin\left(k\left(\frac{\pi(x-a)}{b-a}\right)\right) & \GD = \{a,b\},\\
			\sin\left(\left(k-\frac{1}{2}\right)\left(\frac{\pi(x-a)}{b-a}\right)\right) & \GD = \{a\},\\
			\cos\left(\left(k-\frac{1}{2}\right)\left(\frac{\pi(x-a)}{b-a}\right)\right) & \GD = \{b\},\\
			\cos\left((k-1)\left(\frac{\pi(x-a)}{b-a}\right)\right) & \GD = \emptyset,
		\end{array}\right. 
	\end{equation}
where $k\in \I$, with $\I$ being an infinite index set.

Given a cut-off frequency, we choose a finite index subset  $\J$ and define  $\V = \text{span}\left( \{\Phi_k\}_{k \in \J}\right)$. Leveraging the orthogonality of the functions in the set $\{\Phi_k\}_{k \in \J}$, the DFR method approximates the solution of Eq.~\eqref{eq:res0} using the following discrete loss function:
\begin{equation}\label{eq:losseq}
		\LL(u) =  \sum\limits_{k \in \J} \langle \R(u), \Phi_k \rangle_{H^{*}_\mathrm{D}\times H^1_{0,\mathrm{D}}}^2 =  \sum\limits_{k \in \J} \left( b(u, \Phi_k)-\ell(\Phi_k)\right)^2. 
\end{equation}

Equations~\eqref{eq:losseq0} and~\eqref{eq:losseq} are identical in the DFR method since its Gram matrix is the identity.

\begin{remark}[Numerical integration]
	We calculate $\LL(u)$ by numerically evaluating integrals involving the basis functions and their derivatives. Since the basis functions $\Phi_k$ in Eq.~\eqref{eq:basis} consist of products of sine and cosine functions, this naturally leads to the idea of using the Discrete Sine/Cosine Transforms (DST/DCT) as a quadrature rule~\cite{ britanak2010discrete}. Some scenarios, such as problems with low-regularity solutions, may require many integration points (and possibly non-uniformly distributed), translating to high computational costs. Nevertheless, we do not focus on designing efficient integration rules in this paper and leave this as part of our future work. 
\end{remark}

\section{The optimization method}\label{sec:optimis}

We use Adam~\cite{kingma2015ADAM} as the optimization method responsible for adjusting the hyper-parameters of our NNs. We enhance the optimizer performance calculating with a least squares (LS) approach the weights of the last hidden layer of the NN (see~\cite{ cyr2020robust, huang2006extreme} for further details of this hybrid approach).

When using the loss function~\eqref{eq:losseq} in a neural network of the form~\eqref{eq:layerspar}, one defines a quadratic system of equations with respect to the parameters $\tW_L$. Therefore, we can improve the solution by solving the following least-squares problem:
\begin{equation}\label{eq:lssys}
	\tW_L := \text{arg}\min_{\tW} \| \tA \tW -\tb  \|^2_{\ell^2},
\end{equation}
where $\tA:= [b(\N_{L\text{-}1}^i,\Phi_k)]_{i,k}$ is not necessarily a square matrix, $\N_{L\text{-}1}^i$ is the $i$-th node of the last hidden layer, and $\tb:=[\ell(\Phi_k)]_k$ is the right hand side vector.  
The solution to the least squares system~\eqref{eq:lssys} can be found by solving the normal equations. Thus, the weights of the final layer consistently correspond to optimal values, given the variable parameters $\tW_j$ and $\tb_j$ for $j \in \{1,\dots, L\text{-}1\}$, which are subsequently tuned through the gradient descent algorithm.

Figure~\ref{figArchitecture} sketches the details of the fully connected NN employed in this work. 

\begin{figure}[ht!]
	\centering
	\begin{tikzpicture}
	
	\draw (0,2.85) node{Input};
	\draw (0,2.5) node {$\overbrace{\hspace{1cm}}^{\hspace{1cm}}$} ;
	
	\draw (4,-1.5) node {Trainable layers};
	\draw (4,-1.2) node {$\underbrace{\hspace{6.7cm}}_\text{\hspace{1cm}}$} ;
	
	
	\Vertex[x=8.3,y=1,style={opacity=0.0,color=white!0},size=0.00000001]{inv1}
	\Vertex[x=8.3,y=-0.7,style={opacity=0.0,color=white!100}]{inv2}
	\draw (8.3,-0.7) node {LS};
	\Edge[Direct,lw=1pt](inv2)(inv1)
	
	\draw(11,2.85) node {Output};
	
	\draw (11,2.5) node {$\overbrace{\hspace{1cm}}^\text{\hspace{1cm}}$};
	\Vertex[x=10,y=1.5,style={opacity=0.0,color=white!0},size=0.00000001]{inv1}
	\Vertex[x=10,y=0,style={opacity=0.0,color=white!100}]{inv2}
	\draw (10,0) node {BC};
	\Edge[Direct,lw=1pt](inv2)(inv1)
	
	\draw (13,0.7) node {$\underbrace{\hspace{1cm}}^\text{\hspace{1cm}}$};
	\draw (13,0) node {Loss function};
	
	\draw[blue , dashed, line width=1pt] (13,-0.2) -- (13,-2.1) ;
	\draw[blue , dashed,line width=1pt] (13,-2.1) -- (4,-2.1);
	\draw[->,blue , dashed, line width=1pt] (4,-2.1) -- (4,-1.7);
	\draw (9,-1.8) node {{ADAM optimizer}};
	
	\Vertex[x=0,y=1.5,label=$x$,color=white,size=1,fontsize=\normalsize]{X}
	\Vertex[x=2,y=3,color=white,size=0.75]{A11}
	\Vertex[x=4,y=3,color=white,size=0.75]{A21}
	\Vertex[x=7,y=3,color=white,size=0.75]{A31}
	\Vertex[x=2,y=2,color=white,size=0.75]{A12}
	\Vertex[x=4,y=2,color=white,size=0.75]{A22}
	\Vertex[x=7,y=2,color=white,size=0.75]{A32}
	\Vertex[x=2,y=0,color=white,size=0.75]{A13}
	\Vertex[x=4,y=0,color=white,size=0.75]{A23}
	\Vertex[x=7,y=0,color=white,size=0.75]{A33}
	
	\begin{scope}[shift={(A11.center)}, scale=0.1]
		\draw[color=gray,<->,>={Stealth[length=0.5mm,width=0.5mm]}](-2,0) -- (2,0) ;
		\draw[color=gray,<->,>={Stealth[length=0.5mm,width=0.5mm]}](0,-1.5) -- (0,1.5) ;
		\draw[domain=-2:2,smooth,variable=\x,red,line width=1pt] plot ({\x},{tanh(\x)});
	\end{scope}
	\begin{scope}[shift={(A21.center)}, scale=0.1]
		\draw[color=gray,<->,>={Stealth[length=0.5mm,width=0.5mm]}](-2,0) -- (2,0) ;
		\draw[color=gray,<->,>={Stealth[length=0.5mm,width=0.5mm]}](0,-1.5) -- (0,1.5) ;
		\draw[domain=-2:2,smooth,variable=\x,red,line width=1pt] plot ({\x},{tanh(\x)});
	\end{scope}
	\begin{scope}[shift={(A31.center)}, scale=0.1]
		\draw[color=gray,<->,>={Stealth[length=0.5mm,width=0.5mm]}](-2,0) -- (2,0) ;
		\draw[color=gray,<->,>={Stealth[length=0.5mm,width=0.5mm]}](0,-1.5) -- (0,1.5) ;
		\draw[domain=-2:2,smooth,variable=\x,red,line width=1pt] plot ({\x},{tanh(\x)});
	\end{scope}
	\begin{scope}[shift={(A12.center)}, scale=0.1]
		\draw[color=gray,<->,>={Stealth[length=0.5mm,width=0.5mm]}](-2,0) -- (2,0) ;
		\draw[color=gray,<->,>={Stealth[length=0.5mm,width=0.5mm]}](0,-1.5) -- (0,1.5) ;
		\draw[domain=-2:2,smooth,variable=\x,red,line width=1pt] plot ({\x},{tanh(\x)});
	\end{scope}
	\begin{scope}[shift={(A13.center)}, scale=0.1]
		\draw[color=gray,<->,>={Stealth[length=0.5mm,width=0.5mm]}](-2,0) -- (2,0) ;
		\draw[color=gray,<->,>={Stealth[length=0.5mm,width=0.5mm]}](0,-1.5) -- (0,1.5) ;
		\draw[domain=-2:2,smooth,variable=\x,red,line width=1pt] plot ({\x},{tanh(\x)});
	\end{scope}
	\begin{scope}[shift={(A22.center)}, scale=0.1]
		\draw[color=gray,<->,>={Stealth[length=0.5mm,width=0.5mm]}](-2,0) -- (2,0) ;
		\draw[color=gray,<->,>={Stealth[length=0.5mm,width=0.5mm]}](0,-1.5) -- (0,1.5) ;
		\draw[domain=-2:2,smooth,variable=\x,red,line width=1pt] plot ({\x},{tanh(\x)});
	\end{scope}
	\begin{scope}[shift={(A23.center)}, scale=0.1]
		\draw[color=gray,<->,>={Stealth[length=0.5mm,width=0.5mm]}](-2,0) -- (2,0) ;
		\draw[color=gray,<->,>={Stealth[length=0.5mm,width=0.5mm]}](0,-1.5) -- (0,1.5) ;
		\draw[domain=-2:2,smooth,variable=\x,red,line width=1pt] plot ({\x},{tanh(\x)});
	\end{scope}
	\begin{scope}[shift={(A32.center)}, scale=0.1]
		\draw[color=gray,<->,>={Stealth[length=0.5mm,width=0.5mm]}](-2,0) -- (2,0) ;
		\draw[color=gray,<->,>={Stealth[length=0.5mm,width=0.5mm]}](0,-1.5) -- (0,1.5) ;
		\draw[domain=-2:2,smooth,variable=\x,red,line width=1pt] plot ({\x},{tanh(\x)});
	\end{scope}
	\begin{scope}[shift={(A33.center)}, scale=0.1]
		\draw[color=gray,<->,>={Stealth[length=0.5mm,width=0.5mm]}](-2,0) -- (2,0) ;
		\draw[color=gray,<->,>={Stealth[length=0.5mm,width=0.5mm]}](0,-1.5) -- (0,1.5) ;
		\draw[domain=-2:2,smooth,variable=\x,red,line width=1pt] plot ({\x},{tanh(\x)});
	\end{scope}

	\Vertex[x=9,y=1.5,label=$\tilde{u}(x)$,color=white,size=1,fontsize=\normalsize]{tildeu}
	\Vertex[x=11,y=1.5,label=$u(x)$,color=white,size=1,fontsize=\normalsize]{u}
	
	\Vertex[x=13,y=1.5,label=$\LL(u)$,color=white,size=1,fontsize=\normalsize]{lu}
	
	\Vertex[x=5.5,y=3,style={color=white},label=$\color{black}\hdots$,size=0.75]{Ah1}
	\Vertex[x=5.5,y=2,style={color=white},label=$\color{black}\hdots$,size=0.75]{Ah2}
	\Vertex[x=5.5,y=0,style={color=white},label=$\color{black}\hdots$,size=0.75]{Ah3}

	\Edge[Direct,lw=0.5pt](X)(A11)
	\Edge[Direct,lw=0.5pt](X)(A12)
	\Edge[Direct,lw=0.5pt](X)(A13)
	
	\Edge[color=white,label={$\color{black}\vdots$}](A12)(A13)
	\Edge[color=white,label={$\color{black}\vdots$}](A22)(A23)
	\Edge[color=white,label={$\color{black}\vdots$}](Ah2)(Ah3)
	\Edge[color=white,label={$\color{black}\vdots$}](A32)(A33)
	
	\Edge[Direct,lw=0.25pt](A11)(A21)
	\Edge[Direct,lw=0.25pt](A11)(A22)
	\Edge[Direct,lw=0.25pt](A11)(A23)
	\Edge[Direct,lw=0.25pt](A11)(A21)
	\Edge[Direct,lw=0.25pt](A12)(A21)
	\Edge[Direct,lw=0.25pt](A12)(A22)
	\Edge[Direct,lw=0.25pt](A12)(A23)
	\Edge[Direct,lw=0.25pt](A13)(A21)
	\Edge[Direct,lw=0.25pt](A13)(A22)
	\Edge[Direct,lw=0.25pt](A13)(A23)
	
	\Edge[Direct,lw=0.25pt](A23)(Ah1)
	\Edge[Direct,lw=0.25pt](A23)(Ah2)
	\Edge[Direct,lw=0.25pt](A23)(Ah3)
	\Edge[Direct,lw=0.25pt](A21)(Ah1)
	\Edge[Direct,lw=0.25pt](A21)(Ah2)
	\Edge[Direct,lw=0.25pt](A21)(Ah3)
	\Edge[Direct,lw=0.25pt](A23)(Ah1)
	\Edge[Direct,lw=0.25pt](A23)(Ah2)
	\Edge[Direct,lw=0.5pt](A23)(Ah3)
	
	\Edge[Direct,lw=0.5pt](Ah1)(A31)
	\Edge[Direct,lw=0.5pt](Ah1)(A32)
	\Edge[Direct,lw=0.5pt](Ah1)(A33)
	\Edge[Direct,lw=0.5pt](Ah2)(A31)
	\Edge[Direct,lw=0.5pt](Ah2)(A32)
	\Edge[Direct,lw=0.5pt](Ah2)(A33)
	\Edge[Direct,lw=0.5pt](Ah3)(A31)
	\Edge[Direct,lw=0.5pt](Ah3)(A32)
	\Edge[Direct,lw=0.5pt](Ah3)(A33)
	
	\Edge[Direct,lw=0.5pt](A31)(tildeu)
	\Edge[Direct,lw=0.5pt](A32)(tildeu)
	\Edge[Direct,lw=0.5pt](A33)(tildeu)
	
	\Edge[Direct,lw=0.5pt](tildeu)(u)
	\Edge[Direct,lw=0.5pt](u)(lu)
	
\end{tikzpicture} 
	\caption{NN architecture sketch. Non-trainable layers are:  the loss function calculation, the homogeneous Dirichlet boundary condition (BC) imposition, and the least-squares (LS) solution.}
	\label{figArchitecture}
\end{figure}
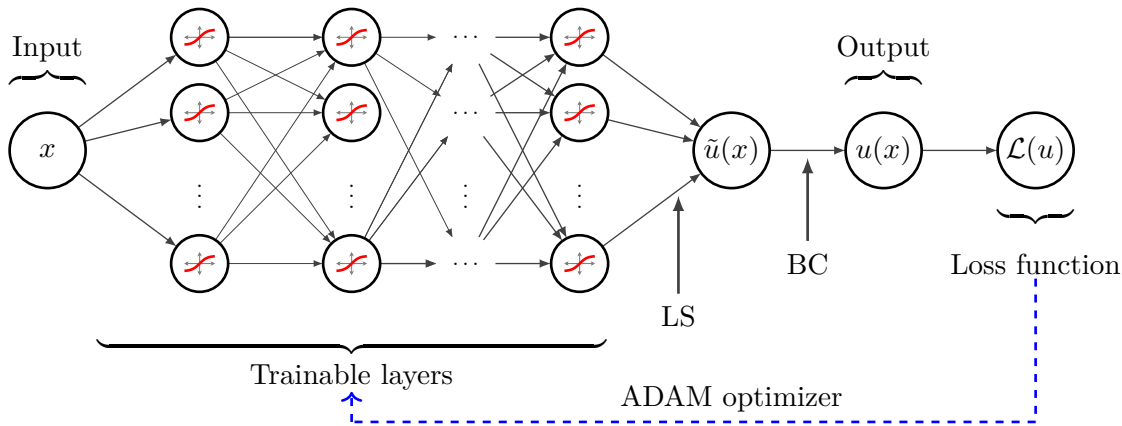

\section{The DFR-based Domain Decomposition method}\label{sec:DFRDDteo}

This section applies the DFR mathematical framework to solve problems of the form of Eq.~\eqref{eq:res0} through domain decomposition. Here, the analysis is limited to domains expressed as finite unions of $n$-rectangles, specifically, domains meeting the following assumption:
\begin{assumption}\label{as:a1}
	The open, bounded domain $\Om \subset \RR^n$ with $n=1, 2$ or $3$, is represented by a finite union of open $n$-rectangles, $\Om = \bigcup\limits_{i=1}^m\Om_{i}$ where each $\Om_{i}$ is a (possibly rotated) open $n$-rectangle.
\end{assumption}

If $\Om$ is connected, each subdomain must overlap with at least one other, and each interior point of $\Om$ is in the interior of at least one subdomain. Thus, Assumption~\ref{as:a1} describes an overlapping subdomain decomposition of $\Om$. Moreover, there exist $m_c \in \NN$ with $m_c \leq m$, such that
\[ \text{card}\{ i \, : \, x \in \Om_i \} \leq m_c \quad  \forall x \in \Om. \]

Following Section~\ref{sec:discLoss}, a possible solution strategy for the problem~\eqref{eq:res0} on a domain $\Om$, satisfying Assumption~\ref{as:a1}, is to construct a finite-dimensional subspace $\V$ within $\HoD$ as follows:
\begin{equation}
	\V = \text{span}\left( \bigcup\limits_{i=1}^m \{\Phi_k^{(i)}\}_{k \in \J^{(i)}} \right),
\end{equation}
where, for each $i \in \{1, \dots, m\}$, the set $\{\Phi_k^{(i)}\}_{k \in \J^{(i)}}$ is a chosen finite-dimensional subset of basis functions within the following space:
 \begin{equation}\label{eq:Hi}
	  H_i := \left\lbrace v \in H^1(\Om) : \text{supp}(v)\subset \bar{\Om}_i,\, \gamma_{\mathrm{D}}(v)=0\right\rbrace.
\end{equation}

Using this approach, one constructs the Gram matrix involving all the basis functions in $\V$ and employ Eq.~\eqref{eq:losseq0} as a loss function. However, a significant challenge arises due to the overlapping of the subdomains $\Om_i$. This overlapping results in the non-orthogonality of the basis functions and potentially a high condition number for the Gram matrix, leading to numerical instabilities and high computational costs. We avoid inverting the Gram matrix by extending  the DFR method using a mathematical construction that circumvents the inversion requirement. 

\begin{assumption}\label{as:a2}
	Consider a Hilbert space $H$ and a given set of closed subspaces $\{H_i\}_{i=1}^m$ within $H$. For all $v\in H$, there exists $\{v_i\}_{i=1}^m$ with $v_i \in H_i$ for all $i$, and $v=\sum\limits_{i=1}^m v_i$.
\end{assumption}

Guaranteeing Assumption~\ref{as:a2} on general domains is not trivial. Constructing such a decomposition requires careful consideration of the complexity of each geometry, including challenging features like (possibly multiple) re-entrant corners and singular points. Section~\ref{sec:extension} and Appendix~\ref{app:2}  describe an appropriate decomposition procedure for two cases of interest.

\subsection{The $\star$-norm}\label{sec:starnorm}

Now, our goal is to bound (above and below) up to a constant the dual norm of the residual $ \| {\R(u)} \| _{H^*_{\mathrm{D}}(\Om)}$ by the sum of dual norms over each of the subdomains. 
\begin{definition}
	Let $H$ be a Hilbert space with dual space $H^*$. Given a function $f\in H^*$, we define the $\star$-norm as
	\[ \left \| f\right \| _{\star} = \sqrt{\sum\limits_{i=1}^m \| f \| _{H^*_i}^2} \quad , \]
	where $\{H_i\}_{i=1}^m$ satisfies Assumption~\ref{as:a2} and $H^*_i$ denotes the dual space of $H_i$.
\end{definition}

$ \| \cdot \| _{\star}$ is an equivalent norm to the classical norm of the dual space.

\begin{proposition}\label{prop:star}
	Let $H$ be a Hilbert space with dual space $H^*$. If there exists a set of closed subspaces $\{H_i\}_{i=1}^m$ satisfying Assumption~\ref{as:a2},  then $f\in H^*$ satisfies
	\begin{equation}\label{eq:starbound}
		\frac{1}{\sqrt{m}} \| f \| _{\star} \leq \| f \| _{H^*} \leq\xi \left( \{H_i\}_{i=1}^m \right) \| f \| _{\star},
	\end{equation}
	with
	\begin{equation}
		\xi \left( \{H_i\}_{i=1}^m \right):=\sup\limits_{v\in H}\frac{1}{\|v\|_{H}}\min\left\{\sqrt{\sum\limits_{i=1}^m \|v_i\|_{H}^2}:v_i\in H_i \text{ and } v = \sum\limits_{i=1}^m v_i \right\}<+\infty. 
	\end{equation}
\end{proposition}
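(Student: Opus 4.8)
The plan is to prove the two inequalities in \eqref{eq:starbound} separately; the lower bound is elementary, whereas the upper bound—together with the finiteness of $\xi$—carries the real content.

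For the lower bound $\frac{1}{\sqrt m}\|f\|_\star \le \|f\|_{H^*}$, I would use that each $H_i$ is a subspace of $H$ equipped with the restricted norm $\|\cdot\|_H$, so testing $f$ only against elements of $H_i$ can only shrink the supremum defining the dual norm; that is, $\|f\|_{H_i^*}\le\|f\|_{H^*}$ for every $i$. Squaring and summing over $i$ gives $\|f\|_\star^2 = \sum_{i=1}^m\|f\|_{H_i^*}^2 \le m\,\|f\|_{H^*}^2$, and taking square roots yields the claim.

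For the upper bound, I would fix $v\in H\setminus\{0\}$ and, invoking Assumption~\ref{as:a2}, pick any decomposition $v=\sum_{i=1}^m v_i$ with $v_i\in H_i$. Linearity of the duality pairing gives $\langle f,v\rangle = \sum_i \langle f,v_i\rangle \le \sum_i \|f\|_{H_i^*}\|v_i\|_H$, and a Cauchy--Schwarz inequality in $\RR^m$ bounds this by $\|f\|_\star\big(\sum_i\|v_i\|_H^2\big)^{1/2}$. Since the left-hand side is independent of the chosen decomposition, I may minimize the right-hand side over all admissible decompositions, divide by $\|v\|_H$, and take the supremum over $v$; by the definition of $\xi$ this produces exactly $\|f\|_{H^*}\le \xi(\{H_i\})\,\|f\|_\star$. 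Here I would also note that the minimum is attained: the admissible decompositions form a nonempty closed affine subspace of the product Hilbert space $\prod_i H_i$, on which the strictly convex, coercive functional $(v_i)\mapsto\sum_i\|v_i\|_H^2$ has a unique minimizer.

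The step I expect to be the crux is showing $\xi(\{H_i\})<+\infty$, which I would obtain from the open mapping theorem. Consider the Hilbert space $\mathcal H := \prod_{i=1}^m H_i$ with norm $\|(v_i)\|_{\mathcal H}^2 = \sum_i\|v_i\|_H^2$ and the bounded linear summation map $T:\mathcal H\to H$, $T((v_i)) = \sum_i v_i$ (boundedness follows from Cauchy--Schwarz, with $\|T\|\le\sqrt m$). Assumption~\ref{as:a2} states precisely that $T$ is surjective, so restricting $T$ to the closed orthogonal complement $(\ker T)^\perp$ gives a bounded linear bijection $S:(\ker T)^\perp\to H$; by the bounded inverse theorem $S^{-1}$ is bounded. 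Since the minimal-norm decomposition of any $v$ is exactly $S^{-1}v$, one identifies $\xi(\{H_i\}) = \|S^{-1}\|<+\infty$, which both legitimizes the constant and completes the proof.
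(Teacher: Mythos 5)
Your proof is correct, and on the crux---the finiteness of $\xi$---it is the same argument as the paper's Lemma~\ref{prop:maps}: both of you form the product Hilbert space $\prod_{i=1}^m H_i$, note that the summation map is bounded and (by Assumption~\ref{as:a2}) surjective, restrict it to the orthogonal complement of its kernel, apply the bounded inverse theorem, and identify the minimal-energy decomposition with the resulting bounded inverse, whose operator norm is $\xi$. Where you genuinely diverge is in how the two inequalities of \eqref{eq:starbound} are extracted. The paper works at the operator level: it introduces the orthogonal projections $Q_i:H\to H_i$, uses $\|f\|_{H_i^*}=\|Q_if\|_{H^*}$ and $\|Q_i\|=1$ for the lower bound, and for the upper bound writes $f=\sum_i P_i^Tf=\sum_i P_i^TQ_if$, then applies the triangle inequality and Cauchy--Schwarz to operator norms, arriving at the constant $\bigl(\sum_i\|P_i\|^2\bigr)^{1/2}$, which it then asserts equals $\xi$. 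You instead test $f$ directly against an arbitrary admissible decomposition $v=\sum_i v_i$, apply Cauchy--Schwarz in $\RR^m$, and minimize over decompositions before taking the supremum in $v$. Your route buys two things: it is more elementary (no adjoints or projections are needed, and the lower bound is just monotonicity of suprema under restriction), and it is actually tighter, because the paper's identification $\bigl(\sum_i\|P_i\|^2\bigr)^{1/2}=\xi$ is in general only the inequality $\bigl(\sum_i\|P_i\|^2\bigr)^{1/2}\geq\xi$ (for two orthogonal subspaces of $\RR^2$ one has $\xi=1$ while $\sum_i\|P_i\|^2=2$), so the paper's chain as written proves the upper bound only with a possibly larger constant, whereas your pairing argument delivers exactly the constant $\xi$ claimed in the statement.
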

Appendix~\ref{app:1} detils the proof of Proposition~\ref{prop:star}.

From Eqs.~\eqref{eqUpLowerBd} and~\eqref{eq:starbound}, we estimate the error using the $\star$-norm of the residual as follows: 
\begin{equation}\label{eq:finalbound}
	\frac{1}{M \cdot \sqrt{m}} \| \R(u) \| _{\star}\leq \| u-u^* \| _{\HoD} \leq \frac{\xi \left( \{H_i\}_{i=1}^m \right)}{\gamma} \| \R(u) \| _{\star}.
\end{equation}

When constructing a subdomain decomposition $\{\Om_i\}_{i=1}^m$ satisfying Assumption~\ref{as:a1}, the overlap constant $m_c$ allows us to regard the union of non-overlapping subdomains as one. In the theory of domain decomposition methods, this \textit{coloring strategy}~\cite{badea2022convergence} allows us to sharpen the estimates~\eqref{eq:starbound} and~\eqref{eq:finalbound}.
From Eq.~\eqref{eq:finalbound}, we obtain that the ${\star}$-norm of the weak residual estimates the error, but we highlight that the quality of this estimate deteriorates as $m_c \to \infty$, making it desirable to use as few overlapping subdomains as possible to cover $\Om$.
Also, whilst our proof guarantees that $\xi$ is finite (see Appendix~\ref{app:2}), it may, in principle, be large. Providing estimations in general domains is beyond the scope of this work; however, in Theorem~\ref{thmConstantL}, we show that for the L-shaped domain, covered by two rectangular domains, the constant is at most $\sqrt{2}$.

\subsection{The DFR-DD loss function}

Applying the theory of Section~\ref{sec:dfrrect}, and considering that each subdomain $\Om_i$ is an $n$-rectangle, we can approximate the dual norm of the residual within each subdomain $\Om_i$. We use a suitable set of local basis functions to this aim, as Eq.~\eqref{eq:basis} defines. Subsequently, the local approximations of the dual norm of the weak residual are used to calculate the $\star$-norm.

Let $\Om = \bigcup\limits_{i=1}^m\Om_{i}$ where each $\Om_{i}$ is an $n$-rectangle, satisfying Assumption~\ref{as:a1}. Given $\{H_i\}_{i=1}^m$ satisfying Assumption~\ref{as:a2}, for each $i$, we choose a cut-off frequency and denote $\J^{(i)}$ the corresponding finite set of indices. The dual norm of the residual constrained to each subdomain $\Om_i$ is such that
\begin{equation}\label{eq:subresi}
	\| {\R(u)} \| _{H_i^*}^2 \approx \sum\limits_{k \in \J^{(i)}} \langle \R(u), \Phi_k^{(i)} \rangle_{H_i^*\times H_i}^2,
\end{equation}
where, for each $i$, $\{ \Phi_k^{(i)} \}_{k\in \J^{(i)}}$ is a set of orthonormal basis functions spanning a finite-dimensional subspace of $H_i$. Therefore, we compute the $\star$-norm of the residual operator as
\begin{equation}\label{eq:subresi}
	\| {\R(u)} \| _{\star}^2 = \sum\limits_{i=1}^m \| {\R(u)} \| _{H_i^*}^2 \approx \sum\limits_{i=1}^m \sum\limits_{k \in \J^{(i)}} \langle \R(u), \Phi_k^{(i)} \rangle_{H_i^*\times H_i}^2. 
\end{equation}

Finally, we minimize the following loss function:
\begin{equation}\label{eq:losseq2}
	{\LL}(u) := {\sum\limits_{i=1}^m \hat{\LL}(u ;\Om_i)}, \quad \text{ with } \quad \hat{\LL}(u;\Om_i) := \sum\limits_{k \in \J^{(i)}} \langle \R(u), \Phi_k^{(i)} \rangle_{H_i^*\times H_i}^2.
\end{equation}

The calculation of $\hat{\LL}(u;\Om_i) $ at each training iteration step is independent of the others. Therefore, the local loss functions can be computed in parallel. 

The accuracy of our DFR-based domain decomposition method relies on two crucial factors (apart from the errors introduced by numerical integration): the nature of the subdomain decomposition of $\Om$ and the truncation error associated with the number of Fourier modes (basis functions) used in the computation of Eq.~\eqref{eq:losseq2}. We elaborate on both aspects:
\begin{itemize}
	\item The accuracy of using $ \| \R(u) \| _{\star}$ as an estimate of the error $\| u-u^* \| _{\HoD}$ depends on the constants appearing in the discrete versions of~\eqref{eqUpLowerBd} and ~\eqref{eq:finalbound}. Note that, the constants bounding the norm of the error in~\eqref{eqUpLowerBd} are problem-dependent. 
	
	For instance, in Poisson's problem, the dual norm of the residual and norm of the error are equal, as $M=\gamma=1$  in~\eqref{eqUpLowerBd}. However, these constants may be highly disparate when the PDE is described by a bilinear form distinct from the canonical inner product on $H^1(\Om)$, as in the Helmholtz equation. On the other hand, the bounds in~\eqref{eqUpLowerBd} also depend on the number of overlapping subdomains used to cover $\Om$. If the subspaces $H_i$ are orthogonal, then it is straightforward to show that $\xi=1$, the $\star$-norm is precisely the dual norm. In our case of domain decompositions, the subdomains necessarily overlap, and thus, the spaces cannot be pairwise orthogonal. Therefore, we aim to compute the loss function~\eqref{eq:losseq2} using the minimum feasible number of overlapping rectangles.
	
	\item In Eq.~\eqref{eq:losseq2}, each term $ \hat{\LL}(u;\Om_i)$ approximates the norm $ \| {\R(u)} \| _{H_i^*}^2 $. For $i\in \{1, \dots, m\}$, we construct a finite set of indices $\J^{(i)}$ by choosing a cut-off frequency in each coordinate direction. This choice introduces a truncation error, which decreases only as additional Fourier modes are used in the calculation. On the other side, the computational cost of the method increases significantly as the number of Fourier modes increases. 
\end{itemize} 

\section{Applications}
\subsection{The extension of the DFR method to polygons}\label{sec:extension}

The above DFR-based domain decomposition method extends the applicability of the DFR method to domains other than the $n$-rectangles considered in~\cite{ taylor2023deep}. 

We consider open and bounded polygons (or polyhedrons) $\Om \subset \RR^n$ with $n=1, 2$ or $3$, with internal angles greater than or equal to $90$ degrees that meet Assumption~\ref{as:a1}. Figure~\ref{fig:pentagon} shows an example of a pentagon and one possible cover of rectangles $\{\Om_{i}\}_{i=1}^5$ for it.
\begin{figure}[ht!]
	\centering
	\includegraphics[width=0.95\linewidth]{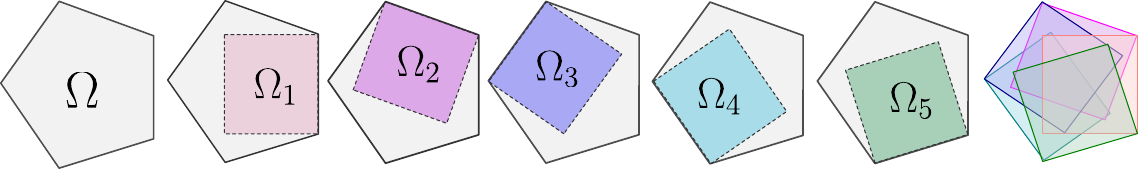}
	\caption{Example of a convex polygon $\Om$ (left), a simple cover of $\Om$ with five rotated rectangles $\Om_i$, and the superposition of all the subdomains (right). }
	\label{fig:pentagon}
\end{figure}

Assumption~\ref{as:a2}  guarantees we can apply the DFR method to rectilinear polygons. Partitions of unity are an attractive method of showing that the necessary decompositions exist; however, due to the geometric issues, the existence of sufficiently regular partitions of unity to guarantee Assumption~\ref{as:a2} holds is not immediate. In Appendix~\ref{app:2}, we provide sufficient results so that the specific cases we study can be easily addressed. In particular, we construct explicit decompositions with sufficient regularity for the L-shape and pentagonal domains, which will be used in our numerical experiments in Section~\ref{sec:num}. 

After ensuring Assumption~\ref{as:a2}, we can use Eq.~\eqref{eq:losseq2} to solve problems of the form~\eqref{eq:res0} on polygons. However, the quality of the loss function~\eqref{eq:losseq2} as an estimate of the energy norm of the error is highly related to the number of overlapping domains considered in the decomposition of $\Om$ and the choice of cut-off frequencies that approximate the dual norm of the residual locally. 

\subsection{Local refinement and adaptivity}\label{sec:localRef}

We now propose a strategy to increase the local accuracy of the DFR method. Since the subdomains $\Om_i$ in  Assumption~\ref{as:a1} may overlap, we propose a strategy that includes additional subdomains in the original decomposition of $\Om$. By doing so, we cover the potential singularities of the solution and increase the resolution of the approximation only where it is needed. 

\subsubsection{Local refinement}

Whilst the $\star$-norm is equivalent to the dual norm in certain circumstances, our capacity to evaluate it numerically is limited principally by using a finite set of spectral test functions on each subdomain. Considering a single rectangular domain, this issue would be apparent without domain decomposition. We may view the addition of higher frequency basis functions over the given subdomains as a case of $p$-refinement on a global level. However, the number of basis functions required for a particular problem, particularly in higher dimensional problems, may become numerically infeasible. As such, it would be more desirable to consider only local refinement to improve the computational costs of evaluating the loss function. We outline a heuristic argument in the following where, for simplicity, we consider only $\Hoo$ as our function space. 

Consider a set of subdomains $\Om_i$, each having a corresponding set of spectral basis functions  $\{ \Phi_k^{(i)} \}_{k\in \J^{(i)}}$, each over a finite index set $\J^{(i)}$ that may depend on $i$. Given a candidate solution $u$, as in Section~\ref{sec:discLoss}, let $\hat{u}_{\R}$ denote the Riesz representative of the residual. If the bilinear form $b$ is the canonical inner product on $\Hoo$, then $\hat{u}_{\R}=u-u^*$. Generally, however, the dependence will be non-local. Without integration errors, the component of the loss $\displaystyle \hat{\LL}(u;\Om_i)= \| Q_{\J^{(i)}} \hat{u}_R \|_{\Hoo}^2$, where $\displaystyle Q_{\J^{(i)}}:\Hoo\to \text{span}\left( \{ \Phi_k^{(i)} \}_{k\in \J^{(i)}} \right)$ is the orthogonal projection onto those basis functions.  

If $u$ has been trained so that the discretized loss function is sufficiently small, then, approximately speaking,  $\hat{u}_R$ is almost orthogonal to the entire set of basis functions $\bigcup\limits_{i=1}^m  \{ \Phi_k^{(i)} \}_{k\in \J^{(i)}}$. If $\hat{u}_R$ is sufficiently localized, in the sense that a significant fraction of its norm is contained in a subdomain that we denote $\Om_{m+1}$ with corresponding function space $H_{m+1}=H^1_0(\Om_{m+1})$, then it would be an obvious choice to include $\|\R(u)\|_{H^*_{m+1}}$ into the loss so that, in a second-stage training, these localized errors can be mitigated. Of course, we are limited again by the finiteness of our test function space, yielding a new set of basis functions $\J^{(m+1)}$ and discretized loss
\begin{equation}\label{eq:losseq3}
	{\LL}(u) := {\sum\limits_{i=1}^{m+1} \hat{\LL}(u ;\Om_i)}, \quad \text{ with } \quad \hat{\LL}(u;\Om_i) = \sum\limits_{k \in \J^{(i)}} \langle \R(u), \Phi_k^{(i)} \rangle_{H_i^*\times H_i}^2,
\end{equation} 
and
\begin{equation}
	\hat{\LL}(u;\Om_{m+1})=\sum\limits_{k\in\J^{(m+1)}}\langle \R(u), \Phi_k^{(i)}\rangle_{H^*_{m+1}\times H_{m+1}}^2=\| Q_{\J^{(m+1)}}\hat{u}_{\R}\|^2_{\Hoo}.
\end{equation}

This motivates the use of $\hat{\LL}(u;\Om_{m+1})$ as both an error indicator for localized errors of the residual -- recalling that these are, generally, not equivalent to localized errors of the solution -- as well as a new component added to the loss in a post-refinement training stage. 

However, there is a subtle issue behind this argument. The orthogonal projection $Q_{\J_i}\hat{u}_{\R}$ may be zero over a subdomain even if $\hat{u}_{\R}$ is non-zero, which will occur if, for example, $\hat{u}_{\R}$ is harmonic over the given region. Our error estimator is a local error estimator for the Riesz representative of the residual {\it modulo functions that are harmonic on the given subdomain}. If, however, $\hat{u}_{\R}$ is harmonic on the entirety of $\Om$, the total error is necessarily zero due to the imposed boundary conditions. Qualitatively, our refinement strategy is thus to identify spatial regions where the deviation of $\hat{u}_R$ from a harmonic function is large and include more localized test functions over this region into the loss to lead to a harmonic, and consequently zero, total error. 

Although the use of~\eqref{eq:losseq3} may deteriorate the correlation between the loss function and the error's energy norm, alternative strategies such as choosing Eq.~\eqref{eq:losseq} as minimization target result in a Gram matrix that is expensive to invert (or singular), rendering the RVPINN strategy infeasible and strengthening our approach. 

There are two options for refining a subdomain decomposition of $\Om$. The first and simplest is to define the location and the dimensions of the new $n$-rectangles \textit{ad-hoc}. This type of refinement is heavily influenced by prior knowledge about the problem and the solution. On the other hand, an automatic adaptive refinement of the subdomain decomposition is possible. That is, given an error indicator --in our case, the local contributions to the loss function-- one adds smaller subdomains in the decomposition only within the marked regions. This idea relates to the traditional mesh refinement techniques used in finite element methods~\cite{ babuvvska1978error,paszynski2021deep} and to recent refinement strategies using dual norms~\cite{ calo2019, rojas2021, cier2021}. Adding new subdomains to the initial domain decomposition of $\Om$ increases the accuracy of the approximation, as in $h$-refinement techniques (see~\cite{ babuvska11984performance,demkowicz1985h}). Conversely, one can add Fourier modes to specific subdomains, mimicking $p$-refinement techniques (see~\cite{ babuska1981p,babuvska11984performance}). Hence, our methodology may be placed within the context of both $h$- and $p$-refinements in the test space. 

\subsubsection{$h$-adaptive local refinement}\label{sec:adaptive}

We propose an automatic adaptive strategy based on using the individual contributions of the terms $\hat{\LL}(u;\Om_i)$ in Eqs.~\eqref{eq:losseq2} and~\eqref{eq:losseq3} as local error indicators. 

\subsubsection*{Subdomain decomposition of $\Om$}
Consider a conforming partition of $\Om$ into $n$-rectangular subdomains, denoted as $\Th$, where the diameter of the subdomains is bounded by $h$. The inclusion of inner vertices in $\Th$, and for each inner vertex $x_i$, we can construct the classical \textit{hat functions} $\nu_i$, which satisfy $\nu_i( x_j) = \delta_{ij}$ for all $i,j \in \{1, \dots, m\}$. Then, an initial decomposition of $\Om$ consists of the following subdomains: \[\Om_{i} = \text{supp}(\nu_i) \quad \text{ with } i \in \{1, \dots, m\}.\]

Figure~\ref{fig:ex1ab} sketches an initial subdomain decomposition of a 1D domain, including the vertices, the hat functions, and the obtained subdomains $\Om_i$, with $i \in \{1, \dots, 5\}$. 
\begin{figure}[ht!]
	\centering
	\begin{tikzpicture}[scale=0.9]
	\begin{axis}[
		width=0.8\textwidth,
		height=4cm,
		xmin=-0.5,
		xmax=10.5,
		ymin=-0.1,
		ymax=1.1,
		grid = both,
		grid style={dashed},
		axis background/.style={fill=gray!5},
		xtick={0,2,4,6,8,10},
		xticklabels={$a$, $x_1$, $x_2$, $x_3$, $x_4$, $b$},
		ytick={0,1},
		smooth,
		]
		
		\addplot [cyan, line width=1.2pt] coordinates {(0,0) (2,1)};
		\addplot [cyan, line width=1.2pt] coordinates {(2,1) (4,0)};
		\addplot [green!70!black, line width=1.2pt] coordinates {(2,0) (4,1)};
		\addplot [green!70!black, line width=1.2pt] coordinates {(4,1) (6,0)};
		\addplot [magenta!80!black, line width=1.2pt] coordinates {(4,0) (6,1)};
		\addplot [magenta!80!black, line width=1.2pt] coordinates {(6,1) (8,0)};
		\addplot [cyan, line width=1.2pt] coordinates {(6,0) (8,1)};
		\addplot [cyan, line width=1.2pt] coordinates {(8,1) (10,0)};
		
		\addplot [only marks, mark=*] coordinates {(0,0) (2,0) (4,0) (6,0) (8,0) (10,0)};
		
		\node[] at (axis cs: 1.3,0.85) {\textcolor{cyan}{$\nu_1$}};
		\node[] at (axis cs: 3.3,0.85) {\textcolor{green!70!black}{$\nu_2$}};
		\node[] at (axis cs: 5.3,0.85) {\textcolor{magenta!80!black}{$\nu_3$}};
		\node[] at (axis cs: 7.3,0.85) {\textcolor{cyan}{$\nu_4$}};
	\end{axis}
	
	\draw [cyan,decorate,decoration={brace,amplitude=5pt,mirror}]
	(0.5,-0.5) -- (4.8,-0.5) node[black,midway,below,yshift=-5pt] {$\Om_{1}$};
	
	\draw [green!60!black,decorate,decoration={brace,amplitude=5pt,mirror=false}]
	(2.65,2.7) -- (6.8,2.7) node[black,midway,above,yshift=5pt] {$\Om_{2}$};
	
	\draw [magenta!80!black,decorate,decoration={brace,amplitude=5pt,mirror}]
	(4.8,-0.5) -- (9,-0.5) node[black,midway,below,yshift=-5pt] {$\Om_{3}$};
	
	\draw [cyan,decorate,decoration={brace,amplitude=5pt,mirror=false}]
	(6.8,2.7) -- (11,2.7) node[black, midway,above,yshift=5pt] {$\Om_{4}$};
\end{tikzpicture} 
	\vspace{-0.3cm}
	\caption{Example of an initial partition of the domain $\Om=(a,b)$, the hat functions $\nu_i$ and the resulting subdomains $\Om_{i}$. }
	\label{fig:ex1ab}
\end{figure}
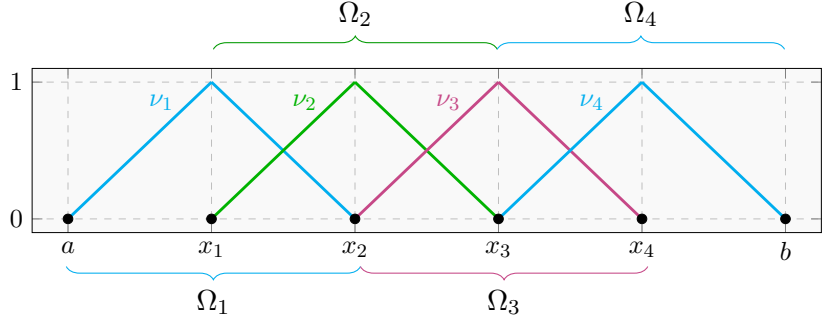

\subsubsection*{Refinement process}

Let $\{\Om_i\}_{i=1}^m$ be an initial subdomain decomposition of $\Om$ satisfying Assumption~\ref{as:a1}. To refine the subdomain decomposition $\{\Om_i\}_{i=1}^m$, we propose an iterative loop consisting of three steps:

\begin{enumerate}
	\item Train a NN with the loss function~\eqref{eq:losseq2} until achieving a prescribed stopping criteria.
	
	Here, the stopping criteria can be a prescribed maximum number of iterations, the difference between the loss of the training and validation sets, or a threshold on the loss value. 
	
	\item For $i\in \{1, \dots, m\}$, propose a subdomain decomposition of $\Om_i$, namely $\{\Om_{i,j}\}_{j=1}^{m'}$.
	
	\begin{figure}[ht!]
		\centering
		\begin{tikzpicture}[scale=0.9]
	\begin{axis}[
		width=0.8\textwidth,
		height=4cm,
		xmin=-0.5,
		xmax=10.5,
		ymin=-0.1,
		ymax=1.1,
		grid = both,
		grid style={dashed},
		axis background/.style={fill=gray!5},
		xtick={0,4,5,6,7,8,10},
		xticklabels={$a$, $x_1'$, $x_2'$, $x_3'$, $x_4'$, $x_5'$,$b$},
		ytick={0,1},
		smooth,
		]
		
		\addplot [gray, line width=0.5pt] coordinates {(0,0) (2,1)};
		\addplot [gray, line width=0.5pt] coordinates {(2,1) (4,0)};
		\addplot [gray, line width=0.5pt] coordinates {(2,0) (4,1)};
		\addplot [gray, line width=0.5pt] coordinates {(4,1) (6,0)};
		\addplot [gray, line width=0.5pt] coordinates {(4,0) (6,1)};
		\addplot [gray, line width=0.5pt] coordinates {(6,1) (8,0)};
		\addplot [gray, line width=0.5pt] coordinates {(6,0) (8,1)};
		\addplot [gray, line width=0.5pt] coordinates {(8,1) (10,0)};
		
		\addplot [cyan, line width=1.5pt] coordinates {(4,0) (5,1)};
		\addplot [cyan, line width=1.5pt] coordinates {(5,1) (6,0)};
		\addplot [magenta, line width=1.5pt] coordinates {(5,0) (6,1)};
		\addplot [magenta, line width=1.5pt] coordinates {(6,1) (7,0)};
		\addplot [green!70!black, line width=1.5pt] coordinates {(7,1) (8,0)};
		\addplot [green!70!black, line width=1.5pt] coordinates {(6,0) (7,1)};
		
		\addplot [gray, only marks, mark=*] coordinates { (4,0) (6,0) (8,0) (5,0) (7,0)};
		
	\end{axis}
	
	\draw [cyan,decorate,decoration={brace,amplitude=5pt,mirror}]
	(4.8,-0.6) -- (6.8,-0.6) node[black,midway,below,yshift=-5pt] {$\Om_{3,1}$};
	
	\draw [magenta,decorate,decoration={brace,amplitude=5pt,mirror=false}]
	(5.8,2.7) -- (7.8,2.7) node[black,midway,above,yshift=5pt] {$\Om_{3,2}$};
	
	\draw [green!70!black,decorate,decoration={brace,amplitude=5pt,mirror}]
	(6.8,-0.6) -- (8.8,-0.6) node[black,midway,below,yshift=-5pt] {$\Om_{3,3}$};
	
\end{tikzpicture}
		\vspace{-0.3cm}
		\caption{Refinement process sketch (1D). Example of a proposed decomposition of the subdomain $\Om_{3}$.}
		\label{fig:refin} 
	\end{figure}
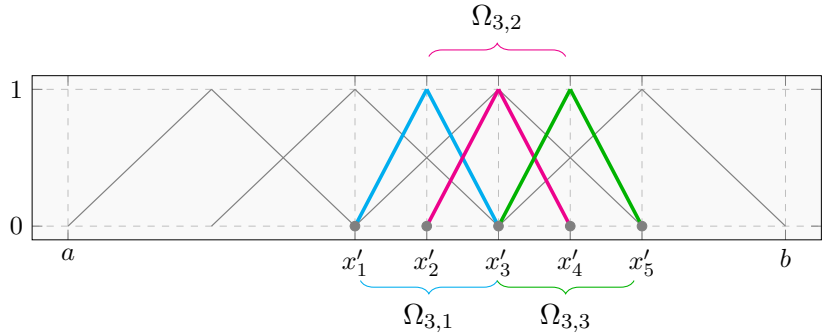
	Figure~\ref{fig:refin} shows the hat functions corresponding to a potential partition of the subdomain $\Om_3$ in Figure~\ref{fig:ex1ab}. Various strategies can partition a subdomain. However, our implementation prioritizes partitions with a minimal number of fully overlapping subdomains to control the bounds in Eq.~\eqref{eq:starbound}. 
	
	\item For $i\in \{1, \dots, m\}$ and $j\in \{1, \dots, m'\}$, add the subdomain $\Om_{i,j}$ to the previous decomposition, if it satisfies:
	\[\hat{\LL}(u;{\Om}_{i,j}) > \tau \left( \max\limits_{r,s} \hat{\LL}(u;{\Om}_{r,s}) \right),\]
	where $\tau \in (0,1]$ is a fixed threshold. 
\end{enumerate}

Algorithm~\hyperref[Algo1]{1} outlines the adaptive local refinement implementation described in  this section. 
\RestyleAlgo{ruled}
\begin{algorithm}[h!]\label{Algo1}
	\caption{Automatic adaptive local refinement}\label{alg:three}
	
	\KwData{cut-off frequencies, $\tau \in (0,1]$ and $\max_\mathrm{ref}>0$}
	
	Choose an initial subdomain decomposition of $\Om$ and let $q = 0$\
	
	\While{$q \leq \max_\mathrm{ref}$}{
		Perform the optimization of the loss function $\LL(u)$ \eqref{eq:losseq2}\
		
		Generate a subdomain decomposition of each $\Om_i$\
		
		Calculate the local error indicators $\varepsilon_{i,j} = \hat{\LL}(u;\Om_{i,j})$\ 
		
		\If{$\varepsilon_{i,j} > \tau \left(\max\limits_{r,s} \varepsilon_{r,s}\right)$}{
			Include subdomain $\Om_{i,j}$ in the decomposition of $\Om$\
		}{}
		$q = q + 1$\
	}
\end{algorithm}

\begin{remark}
	Figure~\ref{fig:sub2D} illustrates an initial subdomain decomposition of a polygonal domain in 2D. There, we show a simple partition of a rectilinear polynomial, the contour plot of the hat functions in 2D, and the resulting subdomain decomposition of $\Om$. 
	\begin{figure}[ht!]
		\centering
		\begin{subfigure}[t]{0.25\textwidth}
			\centering
			\includegraphics[width=0.55\textwidth]{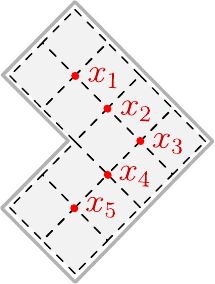}
			\caption{Domain partition.}
			\label{fig:sub2Da}
		\end{subfigure}
		\begin{subfigure}[t]{0.25\textwidth}
			\centering
			\includegraphics[width=1\textwidth]{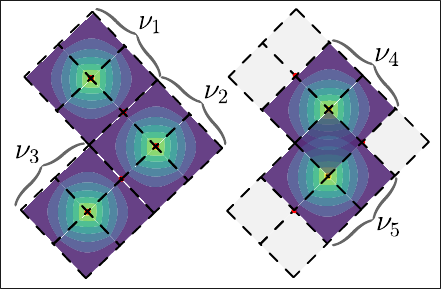}
			\caption{Hat functions in 2D.}
			\label{fig:sub2Db}
		\end{subfigure}
		\begin{subfigure}[t]{0.25\textwidth}
			\centering
			\includegraphics[width=1\textwidth]{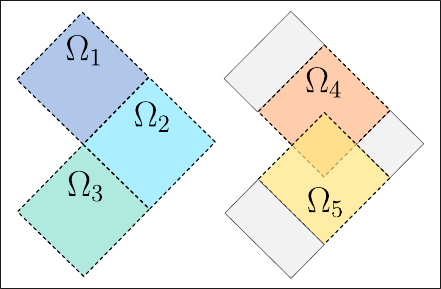}
			\caption{Subdomain decomposition.}
			\label{fig:sub2Dc}
		\end{subfigure}
		\caption{Example of an initial partition of a rectilinear polygon, the hat functions $\nu_i$ in 2D and the resulting subdomain decomposition.}
		\label{fig:sub2D}
	\end{figure}
	The process for automatic local refinements in 2D and 3D is similar to that in the 1D case. In higher dimensions, our algorithm results in quad- and oct-trees that can efficiently and dynamically store different levels of refinement. More information about these hierarchical mesh structures can be found in~\cite{ har2011geometric,mehta2004handbook}. Our implementation is currently limited to 1D cases and is only a proof of concept for 2D and 3D scenarios. Moreover, the number of subdomains increases significantly between refinement levels in 2D and even more so in 3D, and in these cases, the lack of orthogonality of the basis functions introduces new complications.
\end{remark}

\section{Numerical experiments}\label{sec:num}
This section shows numerical experiments in 1D and 2D that illustrate the potential of the proposed adaptive DFR technique. First, we show the relevance of using a subdomain decomposition for solving problems over 2D polygonal domains. We use pentagonal and classical L-shape domains, in which the traditional DFR method cannot be used. Later, we again use the L-shape domain to show the improvements in the solution in 2D given by a refined subdomain decomposition of the L-shape (\textit{ad-hoc}). Finally, we illustrate the performance of the automatic mesh-adaptive algorithm on 1D problems, some exhibiting low-regularity solutions.

In all the experiments, we use \textit{Tensorflow 2.10} and implement a feed-forward fully connected NN consisting of three hidden layers. The first two layers contain ten nodes, while the final layer consists of twenty nodes, resulting in approximately $370$ trainable variables. Every neuron in the network uses the hyperbolic tangent ($\tanh$) activation function. Moreover, we utilize the hybrid Least squares (LS)-Adam optimizer with a different initial learning rate in each case.

In all cases, we strategically distribute the integration points to mitigate the influence of numerical artifacts due to integration, which are significant in situations where the solution's gradient has singularities. Our methodology includes the results from a validation set, employing the same test functions with a significantly larger set of integration points, to monitor and address integration-related errors (as detailed in~\cite{rivera2022quadrature}). During the validation phase, we always employ a distribution of integration points $117\%$ denser than the one used during training\footnote{The Python scripts that generate these numerical results are available in the supplementary  \href{https://github.com/Mathmode/Adaptive-Deep-Fourier-Residual-method-via-overlapping-domain-decomposition/tree/main.}{GitHub repository}}.

\subsection{Case 1. Smooth solution in 2D}\label{sec:case0}
Consider the pentagonal domain $\Om= \Om_1 \cup \Om_2$, with $\Om_1 = (-1,1)^2$ and $\Om_2$ being the rotated square with vertices  $(1,-1)$, $(2,0)$, $(1,1)$ and  $(0,0)$. This geometry is shown in Figure~\ref{figCase0om}. 

We seek for the solution of the following PDE in variational form: 
\begin{equation}\label{eq:Lshape}
	\int_\Om \nabla u \cdot \nabla v \, + f v \, \dx = 0 \qquad \forall v\in \Hoo,
\end{equation}
where $f\in L^2(\Om)$ is such that the exact solution is
\begin{equation}
	u^*(x,y)= (x^2 + y^2 - 0.25)s(x,y),
\end{equation}
and $s$ is the following cut-off function:
\begin{equation}
	s(x,y)= (x + 1) (1 - y^2) ((x - 1) -( y + 1)) ((y - 1) + (x - 1)).
\end{equation} 

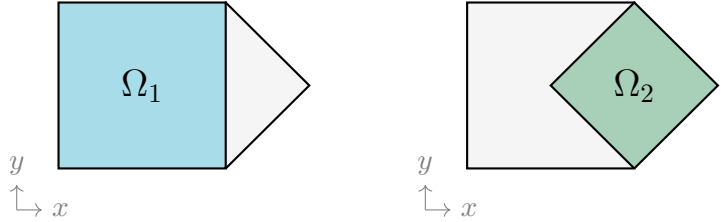
\begin{figure}[ht!]
	\centering
	\begin{subfigure}[t]{0.32\textwidth}
		\centering
		\begin{tikzpicture}[scale=1.1]
			\begin{scope}
				\draw[ thick,fill=gray!8] (1.5,-0.5)--(2.5,0.5)--(1.5,1.5)--(0.5,0.5)--(1.5,-0.5);
				\draw[fill=mycolor1, thick](-0.5,-0.5) rectangle (1.5,1.5);
				\node (a) at (0.5,0.5) {\Large $\Om_1$};
				{\draw[->,gray] (-1,-1) -- (-0.7,-1) node[right] {$x$};
					\draw[->,gray] (-1,-1) -- (-1,-0.7) node[above] {$y$};}
			\end{scope}
		\end{tikzpicture}
	\end{subfigure}
	\begin{subfigure}[t]{0.32\textwidth}
		\centering
		\begin{tikzpicture}[scale=1.1]
			\begin{scope}
				\draw[ thick,fill=gray!8](-0.5,-0.5) rectangle (1.5,1.5);
				\draw[fill=mycolor2, thick] (1.5,-0.5)--(2.5,0.5)--(1.5,1.5)--(0.5,0.5)--(1.5,-0.5);
				\node (a) at (1.5 ,0.5) {\Large $\Om_2$};
				{ \draw[->,gray] (-1,-1) -- (-0.7,-1) node[right] {$x$};
					\draw[->,gray] (-1,-1) -- (-1,-0.7) node[above] {$y$};}
			\end{scope}
		\end{tikzpicture}
	\end{subfigure}
	\caption{The cover of the pentagonal domain.}
	\label{figCase0om}
\end{figure}

Clearly, the construction of $\Om$ as the union of $\Om_1$ and $\Om_2$ satisfies Assumption~\ref{as:a1}. Furthermore, Theorem~\ref{thm:Lshapepent_app} ensures that the set $\{ H^1_0(\Om_1), H^1_0(\Om_2)\}$ satisfies Assumption~\ref{as:a2}. Hence, we can use the DFR-based domain decomposition theory to solve Eq.~\eqref{eq:Lshape} on the pentagonal domain. 

We set the learning rate of the LS-Adam optimizer to be $10^{-2}$ and employ $10 \times 10$ Fourier modes and $100 \times 100$ uniformly distributed integration points on both subdomains. Moreover, the relative $\Hoo$-norm of the error is computed using the midpoint rule, employing a grid consisting of $300$ points along both the $x$ and $y$ axes, spanning the domain $(-1,2)\times(-1,1)$. 

Figure~\ref{fig:SolutionEx0tot} presents the results of the DFR method applied to the pentagonal domain. Figure~\ref{fig:SolutionEx0} shows the reference solution $u^*$, the approximate solution $u$, and the pointwise error. Figure~\ref{fig:GradSolutionEx0} provides the magnitude of the gradient of the reference solution, the approximate solution and the magnitude of the error in the gradient. 
\begin{figure}[ht!]
	\begin{subfigure}[b]{1\textwidth}
		\centering
		\includegraphics{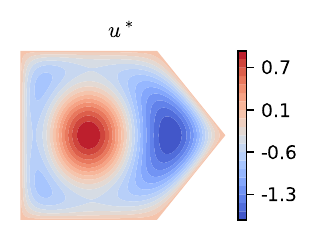}
		\hspace{-0.5cm}
		\includegraphics{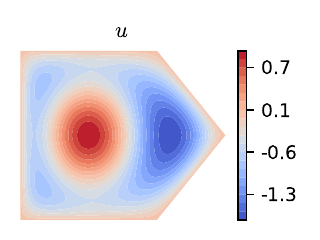}
		\hspace{-0.5cm}
		\includegraphics{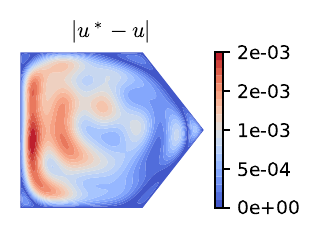}
		\vspace{-0.3cm}
		\caption{Exact solution (left), approximate solution (centre) and error in the solution (right).}
		\label{fig:SolutionEx0}
	\end{subfigure}
	\begin{subfigure}[b]{1\textwidth}
		\centering
		\includegraphics{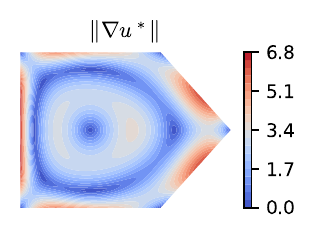}
		\hspace{-0.5cm}
		\includegraphics{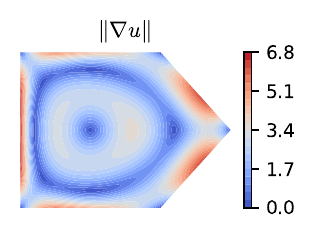}
		\hspace{-0.5cm}
		\includegraphics{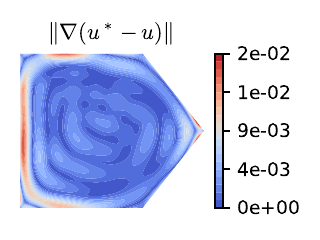}
		\vspace{-0.3cm}
		\caption{Gradient of the exact solution (left), gradient of the approximate solution (centre) and error in the gradient of the solution (right).}
		\label{fig:GradSolutionEx0}
	\end{subfigure}
	\caption{The solution and the gradient of the solution for the model  Case~\hyperref[sec:case0]{1}.}
	\label{fig:SolutionEx0tot}
\end{figure}

Figure~\ref{fig:ex0_lossanderror} shows the evolution of the loss and the percentage of the $\Hoo$-error.  The final relative $\Hoo$-error is $0.13\%$ after $5000$ iterations. This outcome represents an encouraging advance in addressing non-rectangular problems using a spectral-type approach. Moreover, Figure~\ref{fig:ex0_losserror} shows the linear correlation between the loss and the error. One observes a linear correlation between the loss and the error, as indicated by the reference dash line of slope $1$. This linear correlation was not anticipated due to the non-orthogonality of the test basis functions.
\begin{figure}[ht!]
	\centering
	\begin{subfigure}[b]{0.45\textwidth}
		\centering
		\includegraphics{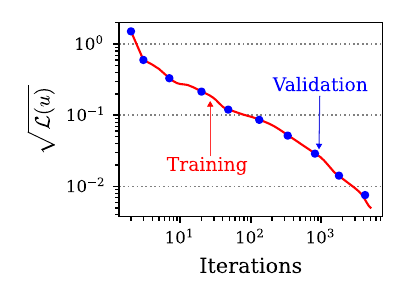}
		\vspace{-0.5cm}
		\caption{The evolution of the loss function on both the training and validation.}
		\label{fig:ex0_lossall}
	\end{subfigure}
	\hspace{0.5cm}
	\begin{subfigure}[b]{0.45\textwidth}
		\centering
		\includegraphics{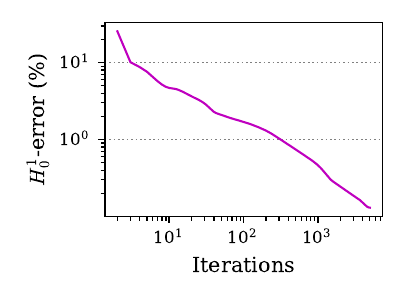}
		\vspace{-0.5cm}
		\caption{The evolution of the relative $\Hoo$-norm of the error. }
		\label{fig:ex0_errorall}
	\end{subfigure}
	\begin{subfigure}[b]{0.5\textwidth}
		\centering
		\includegraphics{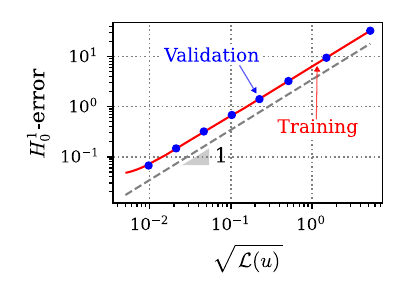}
		\vspace{-0.5cm}
		\caption{The correlation between the loss and the error.}
		\label{fig:ex0_losserror}
	\end{subfigure}
	\caption{The loss and the error for the model Case~\hyperref[sec:case0]{1}.}
	\label{fig:ex0_lossanderror}
\end{figure}

\subsection{Case 2. The L-shape domain}\label{sec:case1}

Consider the L-shape domain, $\Om=(-1,1)^2\setminus [-1,0]^2$, with Dirichlet boundary conditions on $\partial \Om$. We seek for the solution of Eq.~\eqref{eq:Lshape}, where $f\in L^2(\Om)$ is such that the exact solution is
\begin{equation}
	u^*(x,y)=(x^2-1)(y^2-1)s,
\end{equation}
and $s$ is the following singular solution in polar coordinates centred at the origin:
\begin{equation}
	s(r,\theta)=r^\frac{2}{3}\sin\left(\frac{2}{3}\left(\theta-\pi\right)\right).
\end{equation}

We cover $\Om$ with two rectangular domains, $\Om_1=(-1,0)\times(-1,1)$ and $\Om_2=(0,1)\times(-1,0)$ satisfying Assumption~\ref{as:a1}, as shown in Figure~\ref{figCase1Lshape}. 
\begin{figure}[ht!]
	\centering
	\begin{subfigure}[t]{0.32\textwidth}
		\centering
		\begin{tikzpicture}[scale=1.3]
			\begin{scope}
				\draw[fill=mycolor1] (0,0) -- (-1,0) -- (-1,1) -- 
				(1,1) -- (1,-1) -- (0,-1) -- (0,0);
				\draw[fill=gray!8] (0,-1)--(1,-1)--(1,0)--(0,0)--(0,-1);
				\node (a) at (0,0.5) {\Large $\Om_1$};
				{ \draw[->,gray] (-1,-1) -- (-0.7,-1) node[right] {$x$};
					\draw[->,gray] (-1,-1) -- (-1,-0.7) node[above] {$y$};}
			\end{scope}
		\end{tikzpicture}
	\end{subfigure}
	\begin{subfigure}[t]{0.32\textwidth}
		\centering
		\begin{tikzpicture}[scale=1.3]
			\begin{scope}
				\draw[fill=mycolor2] (0,0) -- (-1,0) -- (-1,1) -- 
				(1,1) -- (1,-1) -- (0,-1) -- (0,0);
				\draw[fill=gray!8] (-1,0)--(-1,1)--(0,1)--(0,0)--(-1,0);
				\node (a) at (0.5,0) {\Large $\Om_2$};
				{ \draw[->,gray] (-1,-1) -- (-0.7,-1) node[right] {$x$};
					\draw[->,gray] (-1,-1) -- (-1,-0.7) node[above] {$y$};}
			\end{scope}
		\end{tikzpicture}
	\end{subfigure}
	\caption{The cover of the L-shape domain.}
	\label{figCase1Lshape}
\end{figure}
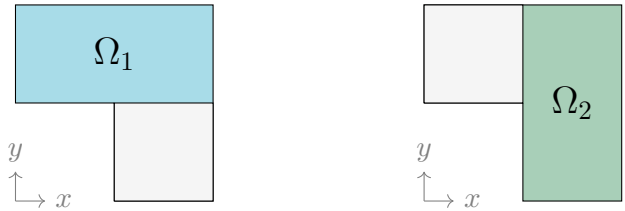

In this specific case, Theorem~\ref{thm:Lshapepent_app} ensures that $\{ H^1_0(\Om_1), H^1_0(\Om_2) \}$ satisfies Assumption~\ref{as:a2}. Moreover, in Appendix~\ref{app:3} we give a more refined analysis of the equivalence constants for the case of the L-shape domain and obtain an upper bound $\xi = \sqrt{2}$. Knowing the equivalence constants and given that the exact solution $u^*$ is singular, we use this example to 
demonstrate the effect of truncation errors in the test function space, related to higher frequency components in the residual. First, Figure~\ref{fig:SolutionEx1tot} illustrates our reference results of the DFR method on the L-shape domain. For this, we set the learning rate to be $10^{-2}$ and use $20 \times 10$ Fourier modes and $500 \times 250$ integration points on $\Om_1$, and $10 \times 20$ Fourier modes and $250 \times 500$ integration points on $\Om_2$. In both cases, the integration points are strategically distributed in a geometric pattern around the singularity at the re-entrant corner. Moreover, here the relative $\Hoo$-norm of the error is computed using an integration rule with $500 \times 500$ points spanning the domain $(-1,1)\times(-1,1)$ and also geometrically distributed around $(0,0)$. 
\begin{figure}[ht!]
	\begin{subfigure}[b]{1\textwidth}
		\centering
		\includegraphics{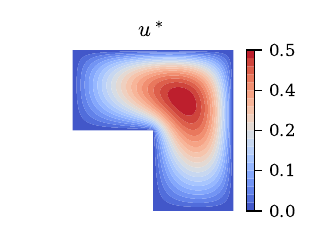}
		\hspace{-1cm}
		\includegraphics{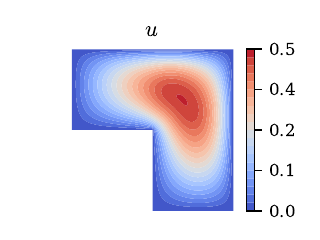}
		\hspace{-0.5cm}
		\includegraphics{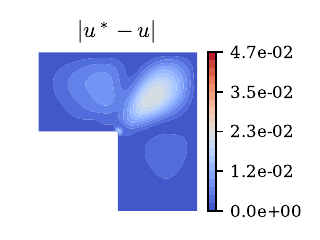}
		\vspace{-0.3cm}
		\caption{Exact solution (left), approximate solution (centre) and error in the solution (right).}
		\label{fig:SolutionEx1}
	\end{subfigure}
	\begin{subfigure}[b]{1\textwidth}
		\centering
		\includegraphics{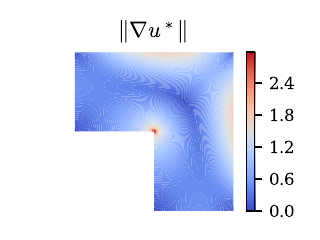}
		\hspace{-1cm}
		\includegraphics{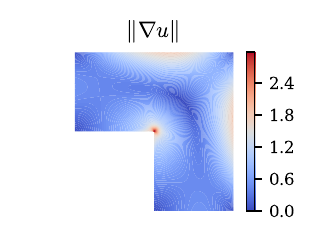}
		\hspace{-0.5cm}
		\includegraphics{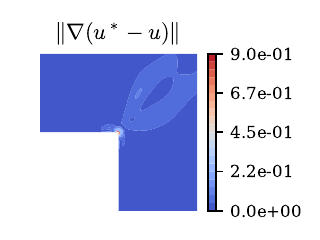}
		\vspace{-0.3cm}
		\caption{Gradient of the exact solution (left), gradient of the approximate solution (centre) and error in the gradient of the solution (right).}
		\label{fig:GradSolutionEx1}
	\end{subfigure}
	\caption{The solution and the gradient of the solution for the model Case~\hyperref[sec:case1]{2}.}
	\label{fig:SolutionEx1tot}
\end{figure}

Figure~\ref{fig:ex1_lossanderror} shows the evolution of the loss function and $\Hoo$-norm of the error for a case using $20$ modes on the long axis. The relative $\Hoo$-error reaches $2.79\%$ after $1000$ iterations.  

 Figure~\ref{fig:ex1_lossvserror} shows the correlation between the loss and the $\Hoo$-error, showing the curves corresponding to $10$, $20$ (the reference) and $60$ Fourier modes on the long axis and half as many along the short axis in each case, i.e., a total of $100$, $400$ and $3600$ test functions, respectively. There, we also provide a correlation line whose width corresponds to the known constants of equivalence $\frac{1}{\sqrt{2}}$ and $\sqrt{2}$. In the reference case, with $400$ test functions, the truncation error dominates asymptotically after an absolute $H_0^1$-error of $0.35$, and this behavior is less prevalent when increasing the number of test functions. We refer to~\cite{ rojas2023robust,taylor2023deep} for a more detailed discussion of this phenomenon. The authors explained the implications of the low quality of the chosen finite-dimensional subspaces of $\Hoo$. This is a motivation to improve the approximated solution by increasing the number of test functions near the singularity, which will help to reduce errors and improve the loss/error ratio.

\begin{figure}[ht!]
	\centering
	\begin{subfigure}[b]{0.45\textwidth}
		\centering
		\includegraphics{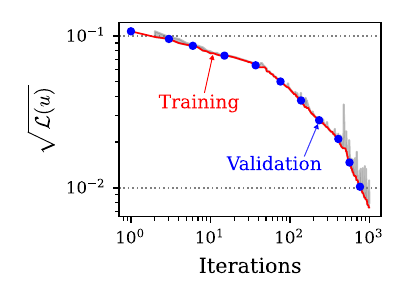}
		\vspace{-0.5cm}
		\caption{The evolution of the loss function on both the training and validation.}
		\label{fig:ex1_lossall}
	\end{subfigure}
	\hspace{0.5cm}
	\begin{subfigure}[b]{0.45\textwidth}
		\centering
		\includegraphics{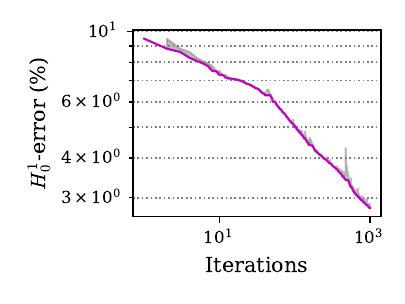}
		\vspace{-0.5cm}
		\caption{The evolution of the relative  $H_0^1$-error in percentage. }
		\label{fig:ex1_errorall}
	\end{subfigure}
		\begin{subfigure}[b]{0.5\textwidth}
		\centering
		\includegraphics{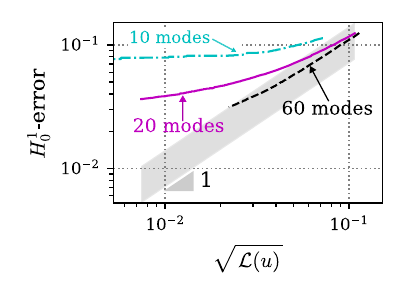}
		\vspace{-0.5cm}
		\caption{The correlation between the loss and the absolute $H_0^1$-error.}
		\label{fig:ex1_lossvserror}
	\end{subfigure}
	\caption{The loss and the error for the model Case~\hyperref[sec:case1]{2}.}
	\label{fig:ex1_lossanderror}
\end{figure}

\subsection{Case 3. The L-shape with local refinements (ad-hoc)}\label{sec:case3}

The L-shape in Case~\hyperref[sec:case1]{2} is distinctive because, as previously noted, the solution exhibits a singular derivative near the re-entrant corner, even if the source term $f$ in Eq.~\eqref{eq:Lshape} is smooth (see~\cite{ciarlet2002finite}). The reduced regularity is due to the presence of the corner itself, which introduces additional complexity to the problem. Therefore, this is an ideal candidate for mesh refinements. Since we know the singularity's location precisely, we can place smaller subdomains near the singularity without increasing the number of Fourier modes over the entire domain.

Starting from the subdomains $\Om_1$ and $\Om_2$ from Figure~\ref{figCase1Lshape}, we refine the partition by sequentially introducing four new overlapping subdomains near the re-entrant corner. The \textit{a-priori}-designed subdomains are shown in Figure~\ref{fig:refinementlshape}.
\begin{figure}[ht!]
	\centering
	\begin{minipage}{0.2\textwidth}
		\begin{equation}
			\begin{split}
				\Om_3 =&(-0.6,0.6)\times(0,0.6),\\
				\Om_4 =&(0,0.6)\times(-0.6,0.6),\\
				\Om_5 =&(-0.2,0.2)\times(0,0.2),\\
				\Om_6 =&(0,0.2)\times(-0.2,0.2).
			\end{split}
		\end{equation}
	\end{minipage}
	\hspace{1cm}
	\begin{minipage}{0.6\textwidth}
		\begin{center}
			\begin{subfigure}[t]{0.32\textwidth}
				\centering
				\begin{tikzpicture}[scale=1.3]
					\begin{scope}
						\draw[fill=gray!8] (0,0) -- (-1,0) -- (-1,1) -- 
						(1,1) -- (1,-1) -- (0,-1) -- (0,0);
						\draw[fill=mycolor1,dashed] (-0.6,0)--(0.6,0)--(0.6,0.6) -- (-0.6,0.6) -- (-0.6,0);
						\draw[fill=mycolor3,dashed] (-0.2,0)--(0.2,0)--(0.2,0.2) -- (-0.2,0.2) -- (-0.2,0);
						\node[] at (0,0.77) {\textcolor{black}{$\Om_3$}};
						\node[] at (-0.37,0.12) {\textcolor{black}{{\scriptsize  $\Om_5$}}};
						{ \draw[->,gray] (-1,-1) -- (-0.7,-1) node[right] {$x$};
							\draw[->,gray] (-1,-1) -- (-1,-0.7) node[above] {$y$};}
					\end{scope}
				\end{tikzpicture}
			\end{subfigure}
			\begin{subfigure}[t]{0.32\textwidth}
				\centering
				\begin{tikzpicture}[scale=1.3]
					\begin{scope}
						\draw[fill=gray!8] (0,0) -- (-1,0) -- (-1,1) -- 
						(1,1) -- (1,-1) -- (0,-1) -- (0,0);
						\draw[fill=mycolor2,dashed] (0,-0.6)--(0.6,-0.6)--(0.6,0.6) -- (0,0.6) -- (0,-0.6);
						\draw[fill=mycolor4,dashed] (0,-0.2)--(0.2,-0.2)--(0.2,0.2) -- (0,0.2) -- (0,-0.2);
						
						\node[] at (0.8,0) {\textcolor{black}{$\Om_4$}};
						\node[] at (0.17,-0.37) {\textcolor{black}{{\scriptsize$\Om_6$}}};
						{ \draw[->,gray] (-1,-1) -- (-0.7,-1) node[right] {$x$};
							\draw[->,gray] (-1,-1) -- (-1,-0.7) node[above] {$y$};}
					\end{scope}
				\end{tikzpicture}
			\end{subfigure}
		\end{center}
		\vspace{-0.5cm}
		\caption{The local refinement of $\Om$ with new subdomains near the singularity.} \label{fig:refinementlshape}
	\end{minipage}
\end{figure}

We set a learning rate of $10^{-2}$, and for each subdomain, we use $20 \times 10$ Fourier modes, as in Case~\hyperref[sec:case1]{2}. Regarding numerical integration, we maintain a quadrature rule in which the integration points are geometrically distributed around the re-entrant corner. Specifically, we utilize $500 \times 250$ integration points on $\Om_{1}$ and the same number on $\Om_{2}$. To integrate within the subdomains $\Om_3$, $\Om_4$, $\Om_5$, and $\Om_6$, we use the integration points from the larger domains within each subdomain. We choose this to ensure that the approximate solution $u$ is integrated on the same scale, even when different domains are involved. Moreover, for the $\Hoo$-error calculation, we again use $500 \times 500$ geometrically distributed points spanning the domain.

We begin training with the domains $\Om_1$ and $\Om_2$. After $1000$ iterations, we incorporate the domains $\Om_3$ and $\Om_4$ into the decomposition, and following $1000$ iterations, we include $\Om_5$ and $\Om_6$. In the final refinement, the total number of test functions is $1200$.

In Figure~\ref{fig:ex1_ref_lossanderror}, we show the evolution of the loss and the relative error, highlighting the iterations corresponding to the refinement process. As we refine the domain, the loss function increases, indicating that the challenges in achieving accurate approximations are near the singularity.

In Figure~\ref{fig:ex_ref_errorall}, we include a reference case with two subdomains and $60$ Fourier modes along the long axis, i.e., a total of $3600$ test functions. We highlight that, without increasing the computational cost on a global scale, our refinement strategy results in a relative $\Hoo$-error of $1.20\%$, using three times less test functions than the reference case where the error is recorded as $1.58\%$.

\begin{figure}[ht!]
	\centering
	\begin{subfigure}[b]{0.45\textwidth}
		\centering
		\includegraphics[width=1\textwidth]{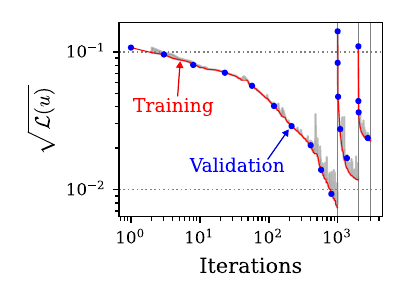}
		\vspace{-1cm}
		\caption{The evolution of the loss function on both the training and validation.}
		\label{fig:ex_ref_lossall}
	\end{subfigure}
	\hspace{1cm}
	\begin{subfigure}[b]{0.45\textwidth}
		\centering
		\includegraphics[width=1\textwidth]{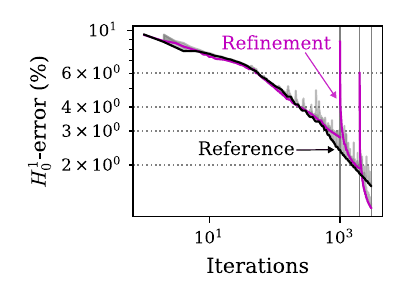}
		\vspace{-1cm}
		\caption{The evolution of the relative  $H_0^1$-error in percentage. }
		\label{fig:ex_ref_errorall}
	\end{subfigure}
	\caption{The loss and the error for the model Case~\hyperref[sec:case3]{3}.}
	\label{fig:ex1_ref_lossanderror}
\end{figure}

\subsection{Case 4. Singular 1D problem}\label{sec:case4}

We now consider the following problem in the weak form: find $u\in H^1_0(0,\pi)$ satisfying
\begin{equation}\label{eq:boundsing}
	\int_0^\pi u'(x)v'(x)-f(x)v(x)\dxx = 0 \qquad \forall v\in H^1_0(0,\pi),
\end{equation}
where $f$ is chosen so that the unique solution is given by 
\begin{equation}
	u^*(x)=x^\alpha(\pi-x),
\end{equation}
and $\alpha=0.7$. We have that $f\not\in L^2(0,\pi)$, but the integral $v\mapsto \int_0^\pi f(x)v(x)\dxx$ corresponds to a continuous linear functional in $H^1_0(0,\pi)$, and thus,~\eqref{eq:boundsing} is a well-defined ordinary differential equation (ODE). As $\alpha<1$, $u'$ is infinite at $0$, and a successful adaptive method refines in a neighborhood of $x=0$. 

Here, we let the initial learning rate to be $10^{-3.5}$. The automatic adaptive refinement process starts with the following subdomain decomposition:
\begin{equation}\label{eq:initdescomp}
	\left\lbrace \Om_i := \left( (i-1)\frac{\pi}{4},(i+1)\frac{\pi}{4}\right)  \right\rbrace_{i=1,2,3}, 
\end{equation}
and we iteratively refine the initial subdomain decomposition five times. We prescribe five Fourier modes for each subdomain (at all levels), and during each iteration of the automatic adaptive Algorithm~\hyperref[Algo1]{1}, we train the NN for a total of $500$ iterations, except for the final step, which employs $1000$ iterations. Moreover, we use a refinement threshold $\tau=0.66$ in Algorithm~\hyperref[Algo1]{1}. Concerning the numerical integration, we generate $500$ geometrically distributed integration points, and, for each refined subdomain, integration is performed using the same quadrature but restricted to that particular subdomain. Here, the relative $\Hoo$-norm of the error is computed using an overkilling quadrature with $5000$ geometrically distributed integration points.

Figure~\ref{fig:SolutionandErrorEx3tot} presents the results from the adaptive refinement process, displaying both the solution and its derivative. Additionally, we illustrate the pointwise errors associated with the final model.
\begin{figure}[ht!]
	\begin{subfigure}[b]{1\textwidth}
		\centering
		\includegraphics{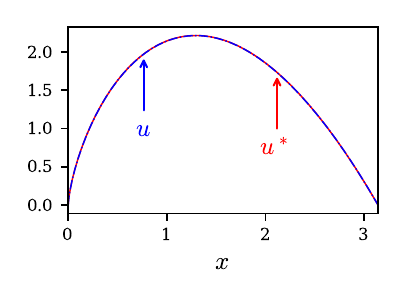}
		\includegraphics{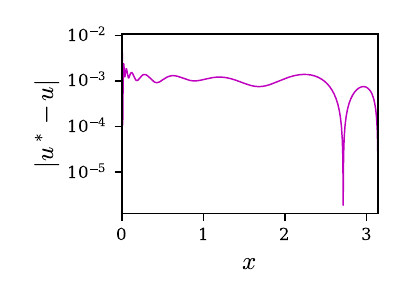}
		\vspace{-0.7cm}
		\caption{Exact and approximate solution (left) and the error in the solution (right).}
		\label{fig:SolutionandErrorEx3}
	\end{subfigure}
	\begin{subfigure}[b]{1\textwidth}
		\centering
		\includegraphics{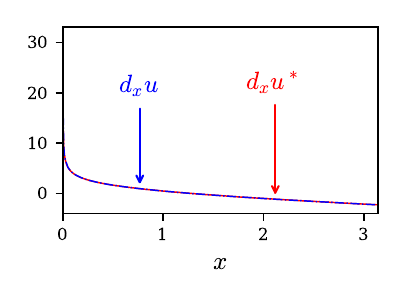}
		\includegraphics{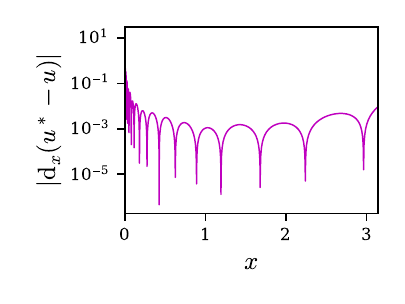}
		\vspace{-0.7cm}
		\caption{Derivative of the exact and approximate solution (left) and the error in the derivative of the solution (right).}
		\label{fig:DerSolutionandErrorEx3}
	\end{subfigure}
	\caption{The solution and the derivative of the solution for the model Case \hyperref[sec:case4]{4}.}
	\label{fig:SolutionandErrorEx3tot}
\end{figure}

The subdomain decomposition obtained at each iteration of the automatic adaptive Algorithm~\hyperref[Algo1]{1} is shown in Figure~\ref{fig:subdex3}. Our method successfully refines the domain $\Om$ near the left boundary, reducing the truncation error solely in that specific region of $\Om$ without introducing additional computational costs in other subdomains. We obtain $8$ subdomains in the last level, implying that we use a total of $40$ localized Fourier modes in the previous iterations.
\begin{figure}[ht!]
	\centering
	\includegraphics{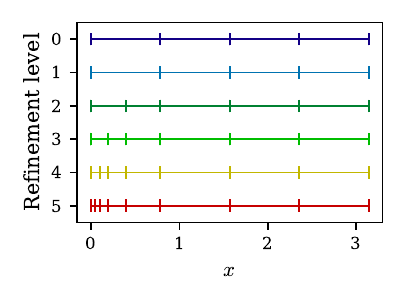}
	\vspace{-0.7cm}
	\caption{The subdomain decomposition at each refinement level for the model Case \hyperref[sec:case4]{4}.}
	\label{fig:subdex3}
\end{figure}

Figure~\ref{fig:ex3_lossanderror} illustrates the results of the model Case \hyperref[sec:case4]{4} in terms of the loss function and the $\Hoo$-norm of the error. There is a consistent relation between the loss function and the error, with both decreasing throughout each refinement step. However, we also observe a deterioration of the equivalence constants as we refine.
\begin{figure}[ht!]
	\centering
	\begin{subfigure}[b]{0.495\textwidth}
		\centering
		\includegraphics{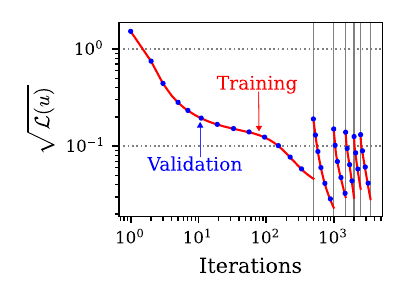}
		\vspace{-0.5cm}
		\caption{The evolution of the loss function during the whole adaptive process and on both the training and validation sets.}
		\label{fig:ex3_lossall}
	\end{subfigure}
	\begin{subfigure}[b]{0.495\textwidth}
		\centering
		\includegraphics{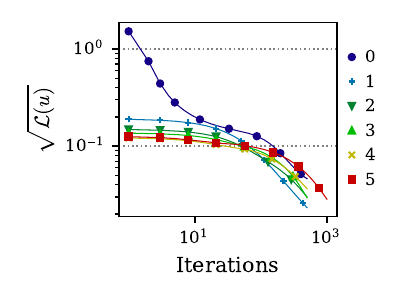}
		\vspace{-0.5cm}
		\caption{The evolution of the loss function for each refinement level. \\~}
		\label{fig:ex3_losseach}
	\end{subfigure}
	\begin{subfigure}[b]{0.495\textwidth}
		\centering
		\includegraphics{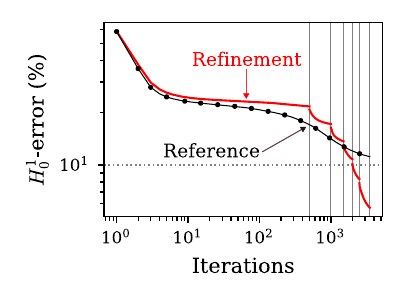}
		\vspace{-0.5cm}
		\caption{The evolution of the relative $H_0^1$-error during the whole adaptive process. }
		\label{fig:ex3_errorall}
	\end{subfigure}
	\begin{subfigure}[b]{0.495\textwidth}
		\centering
		\includegraphics{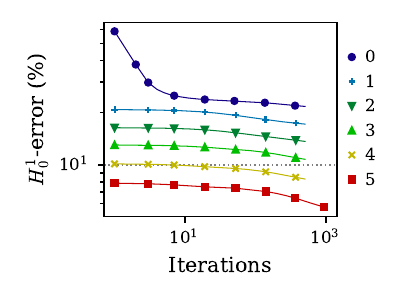}
		\vspace{-0.5cm}
		\caption{The evolution of the relative $H_0^1$-error for each refinement level.}
		\label{fig:ex3_erroreach}
	\end{subfigure}
	\caption{The loss and the error for the model Case \hyperref[sec:case4]{4}.}
	\label{fig:ex3_lossanderror}
\end{figure}

In Figure~\ref{fig:ex3_errorall}, we include the result of a reference model, which employs a non-adaptive approach. In this reference model, no subdomain decomposition of $\Om$ and no refinement process is executed. Moreover, in the reference model, we employ as many Fourier modes ($40$ modes) as the final subdomain decomposition achieved by our algorithm. Those $40$ Fourier modes are evenly distributed throughout the entire domain, and we perform integration using the same set of $500$ geometrically distributed points. This comparison effectively underscores the effectiveness of implementing adaptive strategies to enhance the solution and reduce the total error by mitigating localized errors.

\subsection{Case 5. High-gradients in 1D}\label{sec:case5}

We consider the following ODE in variational form: find $u\in H^1_0(0,\pi)$ satisfying Eq.~\eqref{eq:boundsing}, where $f$ is such that the exact solution is
\begin{equation}
	u^*(x)=x(x-\pi)\exp\left(-120\left(x-\frac{\pi}{2}\right)^2\right).
\end{equation}

The function $u^*$ is smooth but is mostly constant near the boundary and exhibits a prominent peak and a corresponding large derivative near $x=\frac{\pi}{2}$. 

The adaptive refinement process starts with the subdomain decomposition~\eqref{eq:initdescomp} and a learning rate of $10^{-3.5}$. Here, we also prescribe five Fourier modes for each subdomain (at all levels) and iteratively refine the subdomain decomposition five times. We set the refinement constant $\tau$ to $0.66$. During each iteration of the automatic adaptive Algorithm~\hyperref[Algo1]{1}, we train the NN for $500$ iterations. For the numerical integration, we generate $500$ integration points geometrically distributed around $\frac{\pi}{2}$ and on each subdomain, we use the same corresponding points. Moreover, the error is computed using $5000$ geometrically distributed integration points. 

Figure~\ref{fig:SolutionandErrorEx4tot} presents the automatic adaptive refinement process results, showcasing both the solution and its derivative. There, we also plot the corresponding pointwise errors. 
\begin{figure}[ht!]
	\begin{subfigure}[b]{1\textwidth}
		\centering
		\includegraphics{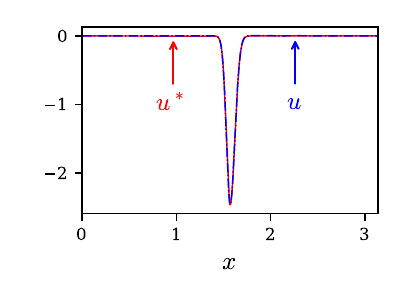}
		\includegraphics{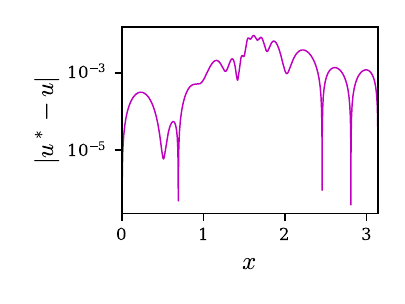}
		\vspace{-0.7cm}
		\caption{Exact and approximate solution (left) and the error in the solution (right).}
		\label{fig:SolutionandErrorEx4}
	\end{subfigure}
	\begin{subfigure}[b]{1\textwidth}
		\centering
		\includegraphics{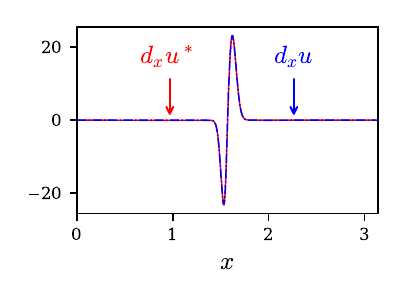}
		\includegraphics{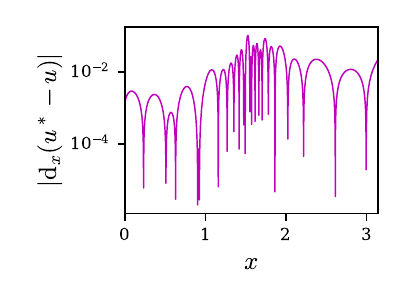}
		\vspace{-0.7cm}
		\caption{Derivative of the exact and approximate solution (left) and the error in the derivative of the solution (right).}
		\label{fig:DerSolutionandErrorEx4}
	\end{subfigure}
	\caption{The solution and the derivative of the solution for the model Case \hyperref[sec:case5]{5}.}
	\label{fig:SolutionandErrorEx4tot}
\end{figure}

Figure~\ref{fig:subdex4} corresponds to the subdomain decomposition obtained at each iteration of the automatic adaptive Algorithm~\hyperref[Algo1]{1}. The refinement process focuses on the region surrounding the high-gradients of the solution near $\frac{\pi}{2}$. In the final iteration, we achieve $12$ subdomains using $60$ localized Fourier modes.
\begin{figure}[ht!]
	\centering
	\includegraphics{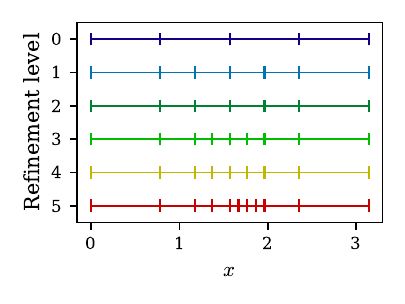}
	\vspace{-0.7cm}
	\caption{The subdomain decomposition at each refinement level for the model Case \hyperref[sec:case5]{5}.}
	\label{fig:subdex4}
\end{figure}

Figure~\ref{fig:ex4_lossanderror} illustrates the evolution of the loss function and the $\Hoo$-norm of the error throughout the entire adaptive process. In Figure~\ref{fig:ex4_errorall}, the reference model utilizes $60$ Fourier modes, the same number of modes employed in our algorithm's final adapted subdomain decomposition. There, the modes are not localized, and the computation of the reference is much more computationally expensive than the solution obtained by our adaptive method. We achieve relative $H_0^1$-errors of $0.00133\%$ by localizing the error. Note that in the initial stages of this example, both the reference and the refined solution display oscillating errors around $100\%$ during the first $1000$ iterations. This indicates that the number of Fourier modes in these levels does not adequately represent the solution and that the improvement in the solution only occurs when a sufficient number of Fourier modes are considered. We include an additional reference curve using $25$ Fourier modes to highlight this behavior. In this case, the error stops decreasing after $2.5\%$, and the truncation error dominates. 

\begin{figure}[ht!]
	\centering
	\begin{subfigure}[b]{0.495\textwidth}
		\centering
		\includegraphics{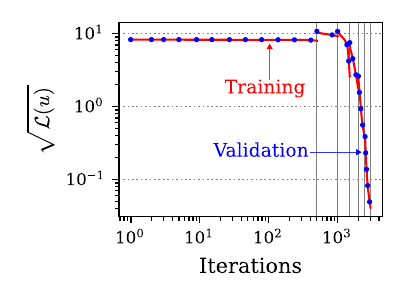}
		\vspace{-0.5cm}
		\caption{The evolution of the loss function during the whole adaptive process and on both the training and validation sets.}
		\label{fig:ex4_lossall}
	\end{subfigure}
	\begin{subfigure}[b]{0.495\textwidth}
		\centering
		\includegraphics{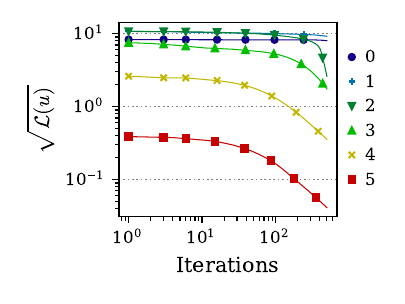}
		\vspace{-0.5cm}
		\caption{The evolution of the loss function for each refinement level. \\~}
		\label{fig:ex4_losseach}
	\end{subfigure}
	\begin{subfigure}[b]{0.495\textwidth}
		\centering
		\includegraphics{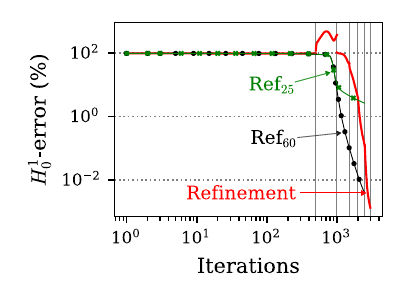}
		\vspace{-0.5cm}
		\caption{The evolution of the relative $H_0^1$-error during the whole adaptive process. }
		\label{fig:ex4_errorall}
	\end{subfigure}
	\begin{subfigure}[b]{0.495\textwidth}
		\centering
		\includegraphics{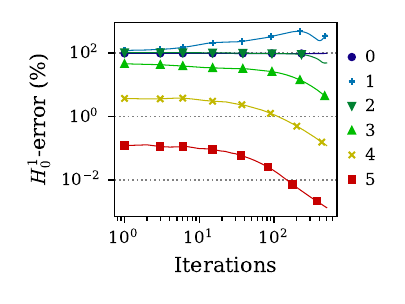}
		\vspace{-0.5cm}
		\caption{The evolution of the relative $H_0^1$-error for each refinement level.}
		\label{fig:ex4_erroreach}
	\end{subfigure}
	\caption{The loss and the error for the model Case \hyperref[sec:case5]{5}.}
	\label{fig:ex4_lossanderror}
\end{figure}


\section{Conclusions}\label{sec:conc}

This work extends the Deep Fourier Residual (DFR) method proposed in~\cite{ taylor2023deep} to handle polygonal geometries and to deal with low-regularity solutions. We propose an iterative subdomain decomposition algorithm guided by an error indicator based on the neural network's loss function. Our method overcomes some of the limitations of traditional Fourier-based approaches. We successfully approximate finer-scale structures in solutions using high-frequency test functions only in specific domain regions. Our adaptive approach eliminates the need for excessive Fourier modes with global support, making it a good fit for PDEs with solutions with peaks and singularities.

Under reasonable assumptions, our method provides good-quality solutions while avoiding the computational cost associated with the inversion of the Gram matrix, a requirement to calculate the loss function in the theory of the Robust Variational Physics-Informed Neural Networks~\cite{ rojas2023robust}. Furthermore, integrating a hybrid optimization approach accelerates Adam's convergence rate, making the training process more efficient. Our numerical experiments consistently validate the strong correlation between the loss function and the $\Hoo$-norm of the error. In 2D, we demonstrate the method's performance using two non-rectangular domains, while in 1D, we provide examples that involve solutions with very low regularity, effectively highlighting the adaptive capabilities of the DFR method. 

We emphasize that through our numerical experiments, we open up a discussion for future work on highly relevant topics, such as the selection of the initial learning rate when using adaptive processes, which may vary depending on the spatial resolution, as well as numerical integration methods that may require a-priori knowledge and on-line modifications when dealing with low-regularity solutions.

\section*{Acknowledgements}

Authors have received funding from the Spanish Ministry of Science and Innovation projects with references TED2021-132783B-I00, PID2019-108111RB-I00 (FEDER/AEI), and PDC2021-121093-I00 (MCIN / AEI / 10.13039 / 501100011033 / Next Generation EU), the ``BCAM Severo Ochoa'' accreditation of excellence CEX2021-001142-S / MICIN / AEI / 10.13039 / 501100011033; the Spanish Ministry of Economic and Digital Transformation with Misiones Project IA4TES (MIA.2021.M04.008 / NextGenerationEU PRTR); and the Basque Government through the BERC 2022-2025 program, the Elkartek project BEREZ-IA (KK-2023 / 00012),, and the Consolidated Research Group MATHMODE (IT1456-22) given by the Department of Education.

 \bibliographystyle{siam}
\bibliography{biblio}

\appendix
\addcontentsline{toc}{section}{Appendices}
\renewcommand{\thesubsection}{\Alph{section}.\Roman{subsection}}

\section{Domain decomposition over polygonal domains}\label{app:2}
Here we provide rigorous results to apply the DFR-based domain decomposition method to rectilinear polygons, in particular to the L-shape and pentagonal domains. Our aim is to show that Assumption~\ref{as:a2} holds in the given domains, to ensure the validity of our method.

For simplicity, here we focus only on decompositions of functions in $\Hoo$. An extension of our method to the case of Neumann (and Robin) boundary conditions is straightforward.

Consider $\{\Om_i\}_{i=1}^m$ to be a collection of rectangular open domains, not necessarily parallel. We define $\Om=\bigcup\limits_{i=1}^m \Om_i$.  Our approach is to use partitions of unity. If a Lipschitz partition of unity over $\{\Omega_i\}_{i=1}^m$, $\{\rho_i\}_{i=1}^m$ exists, then the mapping $v\mapsto \rho_i v$ would give a decomposition of $v=\sum\limits_{i=1}^mv_i$ with $v_i\in H^1_0(\Om_i)$. Then:
\begin{equation}
	\int_\Om |\nabla (\rho_i v)|^2\dx \leq 2\int_\Om v^2|\nabla \rho_i|^2 + \rho_i^2 |\nabla v|^2\dx\leq  2 \max(\|\nabla\rho_i\|_{\infty},1)\int_\Om |\nabla v|^2+v^2\dx. 
\end{equation} 

Unfortunately, some geometries do not permit Lipschitz partitions of unity. If the Lipschitz constant does not grow too rapidly, as we are considering functions in $\Hoo$, we may employ Hardy's inequality~\cite{nevcas1962methode}, which states that there exists a constant $C>0$ such that 
\begin{equation}
	\int_\Om \frac{1}{d(\tx,\partial\Om)^2}v^2\dx \leq C \int_\Om |\nabla v|^2\dx
\end{equation}
for all $v\in \Hoo$. Thus, if $|\nabla\rho_i|^2\leq C \frac{1}{d(\tx,\partial\Om)^2}$, we may define the decomposition $v=\sum\limits_{i=1}^m\rho_iv$, despite $\{\rho_i\}_{i=1}^m$ having unbounded gradient at the boundary. 

Here, the methodology is as follows:
\begin{enumerate}
	\item Define an appropriate continuous partition of unity over the subdomains (Definition~\ref{defPartitionUnity}).
	\item Demonstrate that the partition is Lipschitz on closed subsets of $\bar{\Om}$ that do not contain certain singular points (Corollary~\ref{corLipRegular}). 
	\item Construct explicit local partitions of unity in neighbourhoods of the problematic points that, while not Lipschitz, have controlled growth of the gradient that permits $\HoD$-estimates (Propositions~\ref{propDecompL} and~\ref{propDecomp5}). 
	\item Combine the global partition of unity from (2) with the local partitions from (3) via cutoff-functions to provide the required decomposition (Theorem~\ref{thm:Lshapepent_app}).
\end{enumerate}
Our approach to the first two steps is general. The local analysis of singular points will depend on the particular geometry, and we do not aim to provide a general theorem here, rather an outline on how the method may be applied to particular cases in a replicable way. The local partitions of unity will be based on a simple, explicit representation in polar coordinates.

\begin{definition}\label{defPartitionUnity}
	Let $\Om=\bigcup\limits_{i=1}^m\Om_i$ be a Lipschitz polygon. We define the functions $p_i:\bar{\Om}\to\mathbb{R}$ and ${\rho_i:\bar{\Om}\setminus\{x:\sum\limits_{i=1}^mp_i(\tx)=0\}\to\mathbb{R}}$ by 
	\begin{equation}
			p_i(\tx)= d(\tx,\Om\setminus \Om_i)=\inf\limits_{u\in\Om\setminus \Om_i} |x-y|, \quad \text{ and } \quad
			\rho_i(\tx)=\frac{p_i(\tx)}{\sum\limits_{j=1}^np_j(\tx)}.
	\end{equation}
	 We denote the partition by $\bm\rho=\{\rho_i\}_{i=1}^m$. 
\end{definition}

\begin{proposition}\label{prop:PartitionCont}
	$\bm\rho$ defines a continuous partition of unity of $\Om=\bigcup\limits_{i=1}^m\Om_i$.
\end{proposition}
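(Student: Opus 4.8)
The plan is to verify the defining properties of a partition of unity subordinate to the cover $\{\Om_i\}_{i=1}^m$ --- nonnegativity, $\sum_i\rho_i\equiv 1$ on $\Om$, and $\text{supp}\,\rho_i\subset\bar\Om_i$ --- together with the continuity of each $\rho_i$ on $\Om$. I would begin with the elementary properties of the building blocks $p_i$ of Definition~\ref{defPartitionUnity}. Since $p_i(\tx)=d(\tx,\Om\setminus\Om_i)$ is a distance-to-a-set function, it is nonnegative and $1$-Lipschitz via the standard bound $|d(\tx,A)-d(\ty,A)|\le|\tx-\ty|$ valid for any fixed set $A$; in particular every $p_i$ is continuous on $\bar\Om$, and hence so is the finite sum $S:=\sum_{j=1}^m p_j$.

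The crux is to show that $S(\tx)>0$ for every $\tx\in\Om$, so that the quotient $\rho_i=p_i/S$ is well-defined and continuous there. Fixing $\tx\in\Om$, since the open sets $\{\Om_i\}$ cover $\Om$ there is an index $i_0$ with $\tx\in\Om_{i_0}$, and openness of $\Om_{i_0}$ yields a radius $r>0$ with $B(\tx,r)\subset\Om_{i_0}$. This ball is disjoint from $\Om\setminus\Om_{i_0}$, so every point of that set lies at distance at least $r$ from $\tx$; hence $p_{i_0}(\tx)\ge r>0$ and $S(\tx)\ge p_{i_0}(\tx)>0$. (In the degenerate case $\Om\setminus\Om_{i_0}=\emptyset$ one has $\Om=\Om_{i_0}$ and the partition is trivially $\rho_{i_0}\equiv 1$.) Consequently each $\rho_i$ is a ratio of continuous functions with strictly positive denominator on $\Om$, hence continuous and nonnegative there.

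The two remaining properties are then immediate: by construction $\sum_{i=1}^m\rho_i=S/S=1$ on $\Om$, and if $\tx\in\Om\setminus\Om_i$ then $\tx$ lies in the very set whose distance defines $p_i$, so $p_i(\tx)=0$ and $\rho_i(\tx)=0$, giving $\text{supp}\,\rho_i\subset\bar\Om_i$. I expect the positivity of $S$ to be the only genuine point, but it is in truth immediate from the openness of the cover --- every interior point of $\Om$ is an interior point of some $\Om_{i_0}$, exactly as noted after Assumption~\ref{as:a1}. The one subtlety is that $S$ may vanish on parts of $\partial\Om$; this is harmless because $\rho_i$ is defined only on $\bar\Om\setminus\{S=0\}$, so it does not affect continuity on the open set $\Om$, which is all that the subsequent $\HoD$-decomposition requires.
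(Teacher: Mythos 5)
Your proof is correct and follows essentially the same route as the paper's: continuity and nonnegativity of the distance functions $p_i$, positivity of the denominator $\sum_j p_j$ on $\Om$ via openness of the covering sets, and the immediate verification that the $\rho_i$ sum to one and vanish off $\Om_i$. Your version merely spells out the details (the $1$-Lipschitz bound, the explicit ball argument, the degenerate case) that the paper leaves implicit.
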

\begin{proof}
	As each $p_i$ is a distance function, it is necessarily continuous and non-negative. If $\tx\in \Om_i$ for some $i$, as $\Om_i$ is open, $p_i(\tx)$ is necessarily positive. In particular, the functions $\{\rho_i\}_{i=1}^m$ are well-defined on $\Om$ as $\sum\limits_{i=1}^mp_i(\tx)\neq 0$ on $\Om$. $p_i(\tx)$ is trivially zero for $\tx\in\Om\setminus \Om_i$. Thus it is immediate that $\rho_i$ is positive on $\Om_i$ and zero otherwise, and the construction gives $\sum\limits_{i=1}^m\rho_i(\tx)=1$. 
\end{proof}

Whilst $\bm\rho$ is continuous on $\Omega$, we need to bound its gradient. Furthermore, we note that its definition is implicitly ill-defined at regions in $\partial\Omega$ where $\sum\limits_{i=1}^mp_i(\tx)=0$. We introduce the following definition. 

\begin{definition}
	Let $\Om=\bigcup\limits_{i=1}^m \Om_i$ be a Lipschitz domain. We define a point $\tx\in\bar{\Om}$ to be a {\it $\bm\rho$-singular point} if $\sum\limits_{i=1}^m p_i(\tx)=0$, and to be a {\it $\bm\rho$-regular point} otherwise.
\end{definition}

We choose the name {\it $\bm\rho$-singular point} as we will show later that these are precisely the points at which the derivative of the partition of unity we consider is unbounded, it is clear from the definition of $\bm\rho$ that these points may introduce singularities. This is made precise in the following proposition

\begin{proposition}
	Each $\rho_i$ is differentiable almost everywhere, and 
	\begin{equation}\label{eqLipEstimate}
		|\nabla \rho_i(\tx)|\leq \frac{n+1}{\sum\limits_{j=1}^np_j(\tx)}
	\end{equation}
	almost everywhere on $\bar{\Om}$. In particular, each $\rho_i$ is Lipschitz on closed subsets of $\bar{\Om}$ that do not contain any singular points.
\end{proposition}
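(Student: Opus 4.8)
The plan is to treat each $\rho_i$ as the quotient $p_i/S$, where $S:=\sum_{j=1}^n p_j$, and to exploit the single structural fact that every distance function is $1$-Lipschitz. First I would record that $p_i(\tx)=d(\tx,\Om\setminus\Om_i)$ is $1$-Lipschitz on the compact set $\bar\Om$; by Rademacher's theorem it is therefore differentiable almost everywhere, with $|\nabla p_i|\le 1$ a.e. Summing, $S$ is Lipschitz with $|\nabla S|\le n$ a.e., and at every $\bm\rho$-regular point $S>0$. Hence on the (open, full-measure) set of regular points $\rho_i=p_i/S$ is a quotient of almost-everywhere-differentiable functions with non-vanishing denominator, so $\rho_i$ is itself differentiable almost everywhere.

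Next I would apply the quotient rule,
\[ \nabla\rho_i=\frac{\nabla p_i}{S}-\frac{p_i}{S^2}\,\nabla S, \]
and estimate $|\nabla\rho_i|\le \dfrac{|\nabla p_i|}{S}+\dfrac{p_i\,|\nabla S|}{S^2}\le \dfrac{1}{S}+\dfrac{n\,p_i}{S^2}$ using the two Lipschitz bounds above. The decisive elementary observation is that $p_i$ is one of the non-negative summands making up $S$, so $0\le p_i\le S$ and thus $p_i/S\le 1$. This collapses the second term to at most $n/S$, yielding $|\nabla\rho_i|\le (n+1)/S=(n+1)\big/\sum_{j=1}^n p_j$, which is exactly \eqref{eqLipEstimate}.

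For the final assertion, let $K\subset\bar\Om$ be closed and free of $\bm\rho$-singular points. Since $\Om$ is bounded, $\bar\Om$ is compact and so is $K$; as $S$ is continuous and strictly positive on $K$, it attains a minimum $\delta>0$. I would then deduce Lipschitz continuity of $\rho_i=p_i/S$ on $K$ not by integrating the gradient bound, but from the purely algebraic identity for the difference of two quotients: writing the numerator of $\rho_i(\tx)-\rho_i(\ty)$ as $S(\ty)\big(p_i(\tx)-p_i(\ty)\big)+p_i(\ty)\big(S(\ty)-S(\tx)\big)$ and using $|p_i|\le \mathrm{diam}(\Om)$, $|S|\le n\,\mathrm{diam}(\Om)$, the $1$- and $n$-Lipschitz bounds, and the denominator lower bound $S(\tx)S(\ty)\ge\delta^2$, gives a Lipschitz constant controlled by $\delta^{-2}$, $\mathrm{diam}(\Om)$ and $n$.

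The step I expect to be the genuine obstacle is this last one. Because $\Om$ has re-entrant corners, neither $\bar\Om$ nor its closed subsets $K$ need be convex, so one cannot justify ``essentially bounded gradient implies Lipschitz'' by connecting points with straight segments inside $K$. The algebraic quotient estimate above is the clean way around this: it relies only on the \emph{global} Lipschitz continuity of the $p_j$ on $\bar\Om$ together with the lower bound $S\ge\delta$ on $K$, and is completely insensitive to the geometry of $K$. The earlier differentiability and gradient-bound steps are routine once Rademacher's theorem and the inequality $p_i\le S$ are in hand.
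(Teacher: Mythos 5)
Your proposal is correct, and its first two steps (Rademacher for the distance functions, then the quotient rule combined with $|\nabla p_j|\leq 1$ a.e.\ and $p_i\leq\sum_j p_j$) are essentially identical to the paper's argument for the a.e.\ differentiability and the bound \eqref{eqLipEstimate}. Where you genuinely diverge is the final claim. The paper concludes directly that an a.e.\ bounded gradient on a closed set free of $\bm\rho$-singular points gives Lipschitz continuity there; as you correctly point out, this inference is not automatic on non-convex sets, since one would need to connect points by paths staying inside the set (equivalently, comparability of the intrinsic and Euclidean metrics), a point the paper leaves implicit and which for closed subsets of an L-shaped or pentagonal $\bar{\Om}$ requires at least a remark on quasi-convexity. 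Your algebraic route---writing the numerator of $\rho_i(\tx)-\rho_i(\ty)$ as $S(\ty)\bigl(p_i(\tx)-p_i(\ty)\bigr)+p_i(\ty)\bigl(S(\ty)-S(\tx)\bigr)$ and using only the \emph{global} $1$- and $n$-Lipschitz bounds on $p_i$ and $S$ over $\bar{\Om}$ together with $S\geq\delta$ on the closed set---is completely insensitive to the geometry of that set, and so closes this gap cleanly. The trade-off is a worse Lipschitz constant (of order $\mathrm{diam}(\Om)/\delta^2$ rather than $(n+1)/\delta$), but since only finiteness is needed, your version is the more watertight proof of the stated proposition; the paper's version yields the sharper constant conditional on the omitted path-connectivity argument.
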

\begin{proof}
	From the argument of Proposition~\ref{prop:PartitionCont}, we have that $\sum\limits_{i=1}^m p_i(\tx)>0$ on $\Omega$. In particular, the set of singular points is of measure zero, as it is contained in $\partial\Omega$. 
	Each $p_i$ is a distance function, and is thus Lipschitz with Lipschitz constant at most $1$ and differentiable almost everywhere. It is then immediate that $\rho_i$ differentiable almost everywhere on $\Omega$ as each $p_j$ is, and it remains only to estimate the derivative. This is straightforward, as 
	\begin{equation}
			\nabla\rho_i(\tx)= \frac{1}{\sum\limits_{j=1}^np_j(\tx)}\nabla p_i(\tx)-p_i(\tx)\frac{\sum\limits_{j=1}^m \nabla p_j(\tx)}{\left(\sum\limits_{j=1}^n p_j(\tx)\right)^2}.
	\end{equation}
	We then use that each $p_j$ is positive giving $\frac{p_i(\tx)}{\sum\limits_{j=1}^n p_j(\tx)}\leq 1$, and that $|\nabla p_i(\tx)|\leq 1$ almost everywhere to obtain 
	\begin{equation}
			|\nabla \rho_i(\tx)|\leq \frac{1}{\sum\limits_{j=1}^np_j(\tx)}+\frac{p_i(\tx)}{\sum\limits_{j=1}^np_j(\tx)}\frac{\sum\limits_{j=1}^n|\nabla\rho_j(\tx)|}{\sum\limits_{j=1}^np_j(\tx)}\leq \frac{1+n}{\sum\limits_{j=1}^np_j(\tx)}. 
	\end{equation}
	In particular, as each $p_i$ is non-negative and continuous, we have that on any closed subset of $\bar{\Om}$ not containing any $\bm\rho$-singular points, $\sum\limits_{i=1}^m p_i(\tx)$ is bounded away from zero and thus the gradient of $\rho_i$ is bounded almost everywhere, giving that $\rho_i$ is Lipschitz on closed sets containing no $\bm\rho$-singular points. 
\end{proof}

Whilst the definition of $\bm\rho$-singular points is useful for estimating the regularity of the partition, it is not immediately clear how they may be identified in practice. Our next proposition gives an equivalent condition based on the geometry that will be more useful for identifying $\bm\rho$-singular points. In particular, this will be used to show that the set of $\bm\rho$-singular points is finite and can be be identified by inspection of the vertices of $\Omega$ and $\Omega_i$, and a local analysis at each is then straightforward.

\begin{proposition}\label{prop:EquivSingular}
	A point $\tx\in \bar{\Om}$ is a $\bm\rho$-regular point if and only if there exists some $\delta>0$ and $i=1,...,m$ such that $B_\delta(\tx)\cap\Omega = B_\delta(\tx)\cap\Omega_i$. 
\end{proposition}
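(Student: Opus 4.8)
The plan is to unwind the definition of a $\bm\rho$-regular point directly into the distance-to-set condition and then translate that into the stated ball condition. Since each $p_i$ is non-negative, a point $\tx\in\bar\Om$ is $\bm\rho$-regular precisely when $\sum_{j=1}^m p_j(\tx)>0$, which (by non-negativity of the summands) holds if and only if $p_i(\tx)>0$ for at least one index $i$. Thus the proposition reduces to the claim that $p_i(\tx)>0$ for some $i$ if and only if there exist $\delta>0$ and an index $i$ with $B_\delta(\tx)\cap\Om=B_\delta(\tx)\cap\Om_i$, and I would prove this equivalence index-by-index.

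First I would record the elementary fact that, for a fixed $i$, the value $p_i(\tx)=d(\tx,\Om\setminus\Om_i)$ is strictly positive if and only if there exists $\delta>0$ with $B_\delta(\tx)\cap(\Om\setminus\Om_i)=\emptyset$. This is just the defining property of the distance function to a set: positive distance means some open ball misses the set, and conversely. I would adopt the convention that the distance to the empty set is $+\infty$, which covers the degenerate case $\Om=\Om_i$ cleanly.

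Next I would use the inclusion $\Om_i\subseteq\Om$, which makes $B_\delta(\tx)\cap\Om_i\subseteq B_\delta(\tx)\cap\Om$ automatic. Consequently the set equality $B_\delta(\tx)\cap\Om=B_\delta(\tx)\cap\Om_i$ is equivalent to the single reverse inclusion $B_\delta(\tx)\cap\Om\subseteq\Om_i$, and this in turn is exactly the emptiness statement $B_\delta(\tx)\cap(\Om\setminus\Om_i)=\emptyset$. Chaining these equivalences with the distance characterization from the previous step settles both directions: an index $i$ with $p_i(\tx)>0$ produces a $\delta$ for which the set equality holds, and conversely any ball realizing the set equality forces $p_i(\tx)>0$ for the corresponding index.

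I do not expect a substantial obstacle here; the statement is a purely topological reformulation of the positivity of $\sum_j p_j$, and the argument is a short chain of equivalences. The only points demanding mild care are fixing the convention for the distance to the empty set and keeping track of which inclusion is automatic (so that the set equality collapses to a one-sided condition). The value of the proposition is practical rather than technical: as noted after the statement, it lets one locate the finitely many $\bm\rho$-singular points by inspecting exactly where no single $\Om_i$ locally coincides with $\Om$, namely at the vertices of $\Om$ and of the $\Om_i$.
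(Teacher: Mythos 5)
Your proposal is correct and follows essentially the same route as the paper's proof: both reduce $\bm\rho$-regularity to positivity of some single $p_i(\tx)$, identify $p_i(\tx)>0$ with the existence of a ball missing $\Om\setminus\Om_i$, and use the inclusion $\Om_i\subset\Om$ to convert that emptiness into the stated set equality. Organizing it as a single chain of equivalences rather than two separate implications is a purely cosmetic difference.
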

\begin{proof}
	First, assume that $x$ is a $\bm\rho$-regular point. This implies that $\sum\limits_{i=1}^mp_i(\tx)>0$, and in particular $p_i(\tx)>0$ for some $i$. Thus, as $d(\tx,\Omega\setminus\Omega_i)>0$, for sufficiently small $\delta$, $\emptyset = B_\delta(\tx)\cap (\Omega\setminus\Omega_i)=(B_\delta(\tx)\cap\Omega\setminus (B_\delta(\tx)\cap \Omega_i)$. As $\Omega_i\subset\Omega$, this yields $B_\delta(\tx)\cap\Omega=B_\delta(\tx)\cap\Omega_i$.
	
	For the reverse implication, if $B_\delta(\tx)\cap\Omega= B_\delta(\tx)\cap\Omega_i$ for some $i=1,...,m$ and $\delta>0$, then, by reversing the previous argument, $\emptyset=B_\delta(\tx)\setminus(\Omega\setminus\Omega_i)$, so that $B_\delta(\tx)\subset \Omega\setminus\Omega_i$ and thus $p_i(\tx)=d(\tx,\Omega\setminus\Omega_i)>\delta$. 
\end{proof}

With this equivalent characterisation of $\bm\rho$-singular and $\bm\rho$-regular points, we can demonstrate that any point that simultaneously belongs to an edge-segment of $\partial\Omega$ and $\partial\Omega_i$ for some $i$ is a $\bm\rho$-regular point. 

For clarity, we refer to a region $\Gamma\subset \partial U$, on a Lipschitz polygon $U$, as an edge segment if it is a connected region of the boundary that contains no vertices of $\partial U$. In particular, for small $\delta>0$, if $\tx_0$ is in an edge segment of $U$, then, for some $\tv\in\mathbb{R}^2$ and $c\in\mathbb{R}$, $B_\delta(\tx_0)\cap U=\{\tx\in B_\delta (\tx_0): \tx\cdot \tv<c\}$

\begin{proposition}
	If $\Gamma\subset\partial\Om$ is an edge-segment of both $\Om$ and $\Om_i$ for some $i$, then each $\tx\in \Gamma$ is a regular point. 
\end{proposition}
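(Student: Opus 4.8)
The plan is to reduce the statement to the geometric criterion of Proposition~\ref{prop:EquivSingular}: fixing an arbitrary $\tx\in\Gamma$, it suffices to exhibit a radius $\delta>0$ for which $B_\delta(\tx)\cap\Om=B_\delta(\tx)\cap\Om_i$. The crux is that, at a point lying on an edge segment common to $\Om$ and $\Om_i$, the two domains are described locally by the \emph{same} half-plane.

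First I would record the local half-plane structure furnished by the definition of an edge segment. Since $\Gamma$ is an edge segment of $\Om$, there are $\delta_1>0$, $\tv\in\RR^2$ and $c\in\RR$ with $B_{\delta_1}(\tx)\cap\Om=\{\ty:\ty\cdot\tv<c\}\cap B_{\delta_1}(\tx)$; since $\Gamma$ is also an edge segment of $\Om_i$, there are $\delta_2>0$, $\tw\in\RR^2$ and $d\in\RR$ with $B_{\delta_2}(\tx)\cap\Om_i=\{\ty:\ty\cdot\tw<d\}\cap B_{\delta_2}(\tx)$. As $\tx\in\Gamma\subset\partial\Om\cap\partial\Om_i$ is the common centre, it lies on both bounding lines, i.e.\ $\tx\cdot\tv=c$ and $\tx\cdot\tw=d$. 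Set $\delta=\min(\delta_1,\delta_2)$.

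Next I would invoke the inclusion $\Om_i\subset\Om$, which is immediate from $\Om=\bigcup_{j=1}^m\Om_j$, to pin the two half-planes together. Restricting to $B_\delta(\tx)$ gives $\{\ty:\ty\cdot\tw<d\}\cap B_\delta(\tx)\subseteq\{\ty:\ty\cdot\tv<c\}\cap B_\delta(\tx)$, so within the ball one half-disc is contained in the other while both bounding lines pass through $\tx$. Here is the main obstacle, and it is purely geometric: if the normals $\tv$ and $\tw$ were non-parallel, the two lines would cross transversally at $\tx$ and partition $B_\delta(\tx)$ into four sectors, each half-disc consisting of a distinct pair of adjacent sectors, so neither could contain the other, a contradiction. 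Hence $\tv$ and $\tw$ are parallel and the bounding lines coincide; the containment, together with the fact that $B_\delta(\tx)\cap\Om_i$ is a genuine half-disc (as $\tx\in\partial\Om_i$ and $\Om_i$ is open), then forces the same orientation. Thus the half-planes agree, $B_\delta(\tx)\cap\Om=B_\delta(\tx)\cap\Om_i$, and Proposition~\ref{prop:EquivSingular} yields that $\tx$ is a $\bm\rho$-regular point. Since $\tx\in\Gamma$ was arbitrary, the conclusion follows; the remaining steps beyond the transversality argument are routine bookkeeping with the definitions.
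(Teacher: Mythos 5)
Your proof is correct and takes essentially the same route as the paper's: both use the half-plane representations of $\Om$ and $\Om_i$ near $\tx$ furnished by the edge-segment definition, invoke $\Om_i\subset\Om$ to force the two half-planes to coincide, and conclude via Proposition~\ref{prop:EquivSingular}. The only difference is that you spell out, with the transversality/sector argument, the step the paper asserts in one line (``else $\Om_i$ would not be a subset of $\Om$''), which is a reasonable elaboration rather than a different approach.
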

\begin{proof}
	Let $\tx_0\in \Gamma$. As $\Gamma$ is an edge segment for $\Om_i$ and $\Om$, we have that there exists $\delta>0$, $\tv\in\mathbb{R}^2$ and $c\in\mathbb{R}$ with $B_\delta(\tx_0)\cap \Om=\{\tx\in B_\delta (\tx_0): \tx\cdot \tv<c\}$. It must also hold that, for sufficiently small $\delta$, $B_\delta(\tx_0)\cap \Om=\{\tx\in B_\delta (\tx_0): \tx\cdot \tv<c\}$ with the same $v$ and $c$, else $\Om_i$ would not be a subset of $\Om$. In particular, $B_\delta(\tx_0)\cap\Om= B_\delta(\tx_0)\cap \Om_i$, and thus $\tx_0$ is a regular point. 
\end{proof}

\begin{corollary}
	If $\Om=\bigcup\limits_{i=1}^m \Om_i$ is a Lipschitz polygon, then all $\bm\rho$-singular points of $\Om$ are either vertices of $\Om_i$ or vertices of $\Om_i$. In particular, they are finite in number. 
\end{corollary}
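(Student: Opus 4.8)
The plan is to prove the contrapositive: I will show that any point $\tx\in\bar{\Om}$ that is neither a vertex of $\Om$ nor a vertex of any $\Om_i$ must be a $\bm\rho$-regular point. Since the vertices of $\Om$ together with those of the finitely many rectangles $\Om_i$ form a finite set, establishing this inclusion immediately yields that the $\bm\rho$-singular points are finite in number.

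First I would dispose of interior points. If $\tx\in\Om$, then $\tx\in\Om_i$ for some $i$, and since $\Om_i$ is open, $p_i(\tx)>0$; hence $\sum_j p_j(\tx)>0$ and $\tx$ is regular. Thus every $\bm\rho$-singular point lies on $\partial\Om$, and it suffices to treat boundary points that are not vertices. Take then $\tx\in\partial\Om$ that is not a vertex of $\Om$ nor of any $\Om_i$. Because $\tx$ is not a vertex of $\Om$, it lies on an edge segment, so for small $\delta$ there are $\tv$ and $c$ with $B_\delta(\tx)\cap\Om=\{\ty\in B_\delta(\tx):\ty\cdot\tv<c\}$, an open half-disk whose points are interior to $\Om$.

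Next I would use the finiteness of the cover to locate a single relevant subdomain. Choosing a sequence $\ty_n\to\tx$ inside this half-disk, each $\ty_n$ lies in some $\Om_{j_n}$, and by the pigeonhole principle one index $i$ recurs, giving $\tx\in\bar{\Om}_i$. Since $\Om_i\subset\Om$ and $\tx\in\partial\Om$, the point $\tx$ cannot be interior to $\Om_i$, so $\tx\in\partial\Om_i$; as $\tx$ is not a vertex of $\Om_i$, it lies on an edge segment of $\Om_i$ and $B_{\delta'}(\tx)\cap\Om_i$ is again a half-disk for small $\delta'$. Finally I would reconcile the two half-disks: both $\partial\Om$ and $\partial\Om_i$ are straight lines through $\tx$ near $\tx$, and the inclusion $B_{\delta'}(\tx)\cap\Om_i\subseteq B_{\delta'}(\tx)\cap\Om$ of two half-disks sharing the boundary point $\tx$ forces the defining half-planes to coincide. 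Hence $B_{\delta'}(\tx)\cap\Om=B_{\delta'}(\tx)\cap\Om_i$, and Proposition~\ref{prop:EquivSingular} gives that $\tx$ is regular; equivalently, this is exactly the configuration covered by the preceding edge-segment proposition.

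The main obstacle I anticipate is this last geometric reconciliation, together with the guarantee that \emph{some single} subdomain locally fills the entire half-disk rather than merely touching $\tx$. The pigeonhole step only produces one $\Om_i$ with $\tx\in\bar{\Om}_i$; it is the containment $\Om_i\subset\Om$ combined with the half-disk (edge-segment) structure at $\tx$ that then forces the subdomain edge through $\tx$ to be collinear with, and on the same side as, the boundary edge of $\Om$. The care is in ruling out a transverse subdomain edge (which would make $\Om_i$ stick out of $\Om$) and any tangential touching; invoking $\Om_i\subset\Om$ and the openness of $\Om_i$ is precisely what excludes these degenerate cases and makes the two local half-planes agree.
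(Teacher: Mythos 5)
Your proposal is correct, and its skeleton matches the paper's: reduce to boundary points (interior points have $\sum_j p_j > 0$), then show that non-vertex boundary points are regular via the half-disk (edge-segment) characterization and Proposition~\ref{prop:EquivSingular}. The difference is one of completeness. The paper's own proof is two sentences: it cites the preceding proposition (``any point in an edge segment of both $\Om$ and $\Om_i$ is regular'') and immediately concludes that all singular points are vertices. This leaves implicit precisely the step you isolate as the main obstacle: why a point that is not a vertex of $\Om$ nor of any $\Om_i$ actually lies in an edge segment of \emph{both} $\Om$ and \emph{some} $\Om_i$ simultaneously. Your pigeonhole argument (a sequence $\ty_n \to \tx$ inside the half-disk, with one index recurring, giving $\tx \in \bar{\Om}_i$ and hence $\tx \in \partial\Om_i$) supplies the existence of that subdomain, and your half-plane reconciliation — containment of two half-disks with boundary lines through the common center forces the defining half-planes to coincide — supplies the collinearity of the two edges. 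The reconciliation step echoes the argument already inside the paper's proof of the edge-segment proposition (``else $\Om_i$ would not be a subset of $\Om$''), so it is not new in spirit, but the pigeonhole identification of $\Om_i$ appears nowhere in the paper and is a genuine gap you have filled. In short: same route, but yours is the rigorous version of it.
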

\begin{proof}
	We have that all $\bm\rho$-singular points must be in $\partial\Om$ from Proposition~\ref{prop:PartitionCont}. From the previous proposition, any point which is in an edge segment of both $\Om$ and $\Om_i$ is a regular point. Thus, all $\bm\rho$-singular points must correspond either to vertices of $\Om$ or vertices of $\Om_i$, which is a finite number. 
\end{proof}

For a given set of rectangular domains $\{\Om_i\}_{i=1}^m$, it will generally be straightforward to identify their $\bm\rho$-singular points. One need only consider the vertices of each $\Om_i$ and the vertices of $\Om$, and the property obtained in Proposition~\ref{prop:EquivSingular} can be readily verified at each. We illustrate the $\bm\rho$-singular points of the L-shape and pentagonal domains below, along with their local environments. 

\begin{figure}[h!]
	\centering
		\begin{subfigure}{0.45\textwidth}
			\centering
			\begin{tikzpicture}[scale = 1.3]
				\draw[thick,fill=gray!8] (-1,0) rectangle (1,1);
				\draw[thick,fill=gray!8] (0,-1) rectangle (1,1);
				\filldraw (0,0) circle (0.1);
			\end{tikzpicture}\hspace{0.8cm}
			\begin{tikzpicture}[scale=1.3]
				\draw[thick,domain = -90:180] plot({cos(\x)},{sin(\x)});
				\draw[thick,domain =0:90] plot ({0.35*cos(\x)},{0.35*sin(\x)});
				\node[] at (0.5,0.15) {$\frac{\pi}{4}$};
				\draw[thick,green] (1,0)--(0,0)--(-1,0);
				\draw[thick,blue] (0,1)--(0,0)--(0,-1);
				\filldraw[black] (0,0) circle (0.1);
			\end{tikzpicture}
			\caption{The L-shape, the $\bm\rho$-singular point highlighted (left) and the local environment $B_\delta(\tx_0)\cap \Om$ (right).}
		\end{subfigure}
			\begin{subfigure}{0.45\textwidth}
			\centering
			\begin{tikzpicture}[scale = 1.2]
				\draw[thick,fill=gray!8](-1,-1) rectangle (1,1);
				\draw[thick,fill=gray!8] (1,-1)--(2,0)--(1,1)--(0,0)--(1,-1);
				\filldraw[black](1,1) circle (0.1);
				\filldraw[black](1,-1) circle (0.1);
			\end{tikzpicture}
			\begin{tikzpicture}[scale=1.9]
				\draw[thick,domain = 0:135] plot({cos(\x)},{sin(\x)});
				\draw[thick,domain =0:45] plot ({0.35*cos(\x)},{0.35*sin(\x)});
				\node[] at (0.5,0.15) {$\frac{\pi}{4}$};
				\draw[thick,green] (0,1)--(0,0)--(1,0);
				\draw[thick,blue] (0.707,0.707)--(0,0)--(-0.707,0.707);
				\filldraw[black] (0,0) circle (0.1);
			\end{tikzpicture}
			\caption{The pentagonal domain, the $\bm\rho$-singular points (left) and the local environment $B_\delta(\tx_0)\cap \Om$ (right).}
		\end{subfigure}
	\caption{Illustration of the L-shape and pentagonal domains with their $\bm\rho$-singular points highlighted and their corresponding local environments.}
\end{figure}
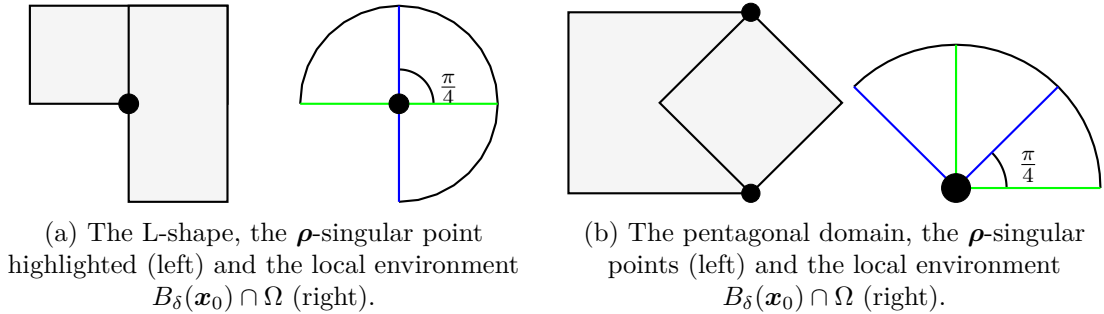

\begin{corollary}\label{corLipRegular}
	Let $\Om=\bigcup\limits_{i=1}^m \Om_i$ be a Lipschitz polygon, and denote its $\bm\rho$-singular points $\{\tx_j\}_{j=1}^n$. Then, for every $\delta>0$, each $\rho_i$ is Lipschitz on the set $\bar{\Om}\setminus \bigcup\limits_{j=1}^n B_\delta(\tx_j)$. 
\end{corollary}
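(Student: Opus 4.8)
The plan is to reduce the statement directly to the proposition that established the gradient bound~\eqref{eqLipEstimate}, whose final conclusion already asserts that each $\rho_i$ is Lipschitz on every closed subset of $\bar{\Om}$ that contains no $\bm\rho$-singular point. It therefore suffices to verify that the set $K:=\bar{\Om}\setminus\bigcup_{j=1}^n B_\delta(\tx_j)$ is a closed subset of $\bar{\Om}$ free of $\bm\rho$-singular points; the corollary then follows by applying that proposition to $K$.

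First I would record the two routine topological facts. The preceding corollary guarantees that the $\bm\rho$-singular points are finite in number, so $\bigcup_{j=1}^n B_\delta(\tx_j)$ is a finite union of open balls and hence open; since $\bar{\Om}$ is closed, $K$ is the intersection of $\bar{\Om}$ with the closed complement of that union, and is thus closed, and moreover compact as a closed subset of the bounded set $\bar{\Om}$. Second, each singular point $\tx_j$ lies at the centre of the ball $B_\delta(\tx_j)$ that is removed, so $\tx_j\notin K$; as $\{\tx_j\}_{j=1}^n$ are \emph{all} of the $\bm\rho$-singular points, $K$ contains none of them.

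With these two checks in place, $K$ satisfies the hypotheses of the earlier proposition and the conclusion is immediate. For completeness one may exhibit an explicit Lipschitz constant: on the compact set $K$ the continuous, non-negative function $\sum_{j} p_j$ vanishes nowhere (no singular points lie in $K$) and therefore attains a strictly positive minimum $c>0$; combined with~\eqref{eqLipEstimate}, this bounds $|\nabla\rho_i|$ by a fixed constant multiple of $1/c$ almost everywhere on $K$.

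I expect no serious obstacle, since the corollary is essentially a specialization of an already-proved proposition to a concrete closed set, and the work is purely a verification of closedness and of the avoidance of singular points. The only point that genuinely deserves care is the passage from an almost-everywhere gradient bound to a true Lipschitz estimate, which is not automatic on a non-convex set: it relies on the quasiconvexity of a Lipschitz polygon, namely that any two points of $\bar{\Om}$ can be joined by a rectifiable path in $\bar{\Om}$ whose length is comparable to their Euclidean distance. As this is precisely the mechanism already used in establishing~\eqref{eqLipEstimate}, I would invoke that result rather than re-derive it here.
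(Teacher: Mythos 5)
Your proposal is correct and follows essentially the same route as the paper, whose entire proof is the one-line observation that the corollary follows directly from the proposition establishing the gradient bound~\eqref{eqLipEstimate}. Your additional verifications (closedness of $\bar{\Om}\setminus\bigcup_{j=1}^n B_\delta(\tx_j)$, absence of singular points, and the explicit positive lower bound on $\sum_j p_j$ by compactness) simply make explicit the routine details the paper leaves implicit.
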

\begin{proof}
	This directly follows from Proposition~\ref{eqLipEstimate}.
\end{proof}

The previous results suggest that the partition of unity $\bm\rho$ can be used away from the finite collection of $\bm\rho$-singular points of $\Om$. Near the $\bm\rho$-singular points, we perform a local decomposition by considering the wedge-shaped segments in polar coordinates. We proceed only focusing only on the particular $\bm\rho$-singular points in the L-shape and pentagonal domains, where similar constructions can easily be envisaged for more general vertices. We do not aim to classify all potential types of $\bm\rho$-singular points nor provide a general proof. 

\begin{proposition}\label{propDecompL}
	Let $\tx_0$ denote the $\bm\rho$-singular point of the L-shape domain, and let $\delta>0$ be small. Then, for $v\in H^1_0(\Om\cap B_\delta(\tx_0))$, there exists $v_i\in H^1_0(\Om_i\cap B_\delta(\tx_0))$ so that $v=v_1+v_2$
\end{proposition}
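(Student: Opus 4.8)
The plan is to pass to polar coordinates $(r,\theta)$ centred at the re-entrant corner $\tx_0$ and to separate the two subdomains by an angular cut-off. Write $U:=\Om\cap B_\delta(\tx_0)$ and $U_i:=\Om_i\cap B_\delta(\tx_0)$. For $\delta$ small, $U$ is the reflex wedge $\{(r,\theta):0<r<\delta,\ -\tfrac{\pi}{2}<\theta<\pi\}$, while locally $U_1=\{0<\theta<\pi\}$ and $U_2=\{-\tfrac{\pi}{2}<\theta<\tfrac{\pi}{2}\}$, with overlap the first-quadrant sector $\{0<\theta<\tfrac{\pi}{2}\}$. I would fix $\phi\in C^\infty(\RR;[0,1])$ with $\phi\equiv 0$ on $(-\infty,0]$ and $\phi\equiv 1$ on $[\tfrac{\pi}{2},\infty)$, set $\chi(\tx):=\phi(\theta)$, and define $v_1:=\chi v$ and $v_2:=(1-\chi)v$, so that $v=v_1+v_2$ by construction. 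Because $\chi\equiv 0$ for $\theta\le 0$ and $\chi\equiv 1$ for $\theta\ge\tfrac{\pi}{2}$, the support of $v_1$ lies in $\overline{U_1}$ and that of $v_2$ in $\overline{U_2}$; moreover each $v_i$ has vanishing trace on $\partial U_i$, since on the part of $\partial U_i$ shared with $\partial U$ the factor $v$ vanishes (as $v\in H^1_0(U)$), whereas the remaining piece is the artificial cut ($\theta=0$ for $U_1$, $\theta=\tfrac{\pi}{2}$ for $U_2$), on which $\chi$, respectively $1-\chi$, vanishes.

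The only nontrivial point is to confirm that $v_1,v_2\in H^1$, and here lies the main obstacle. Since $\chi$ depends on $\theta$ alone, $|\nabla\chi(\tx)|=r^{-1}|\phi'(\theta)|$, which is of order $r^{-1}$ and blows up as $\tx\to\tx_0$; hence no Lipschitz partition of unity can separate the two angular sectors, and the elementary product bound for $\nabla(\chi v)$ does not by itself furnish an $L^2$ gradient. I would resolve this using Hardy's inequality, exactly as outlined at the start of this appendix. The gradient $\nabla\chi$ is supported in the overlap sector $\{0<\theta<\tfrac{\pi}{2}\}$, which is angularly separated from the two boundary rays $\theta=-\tfrac{\pi}{2}$ and $\theta=\pi$ of $U$; for a first-quadrant point at radius $r$ the nearest point of $\partial U$ is the corner itself, so $d(\tx,\partial U)=\min\{r,\delta-r\}$, whence $|\nabla\chi(\tx)|\le C\,d(\tx,\partial U)^{-1}$ on the support of $\nabla\chi$.

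Finally, using $|\nabla(\chi v)|^2\le 2|\nabla\chi|^2v^2+2\chi^2|\nabla v|^2$ and the fact that the reflex wedge $U$ is a bounded Lipschitz domain, so that Hardy's inequality $\int_U d(\tx,\partial U)^{-2}v^2\dx\le C_H\int_U|\nabla v|^2\dx$ holds, I would estimate
\[
	\int_U|\nabla v_1|^2\dx\ \le\ 2C^2\int_U \frac{v^2}{d(\tx,\partial U)^2}\dx+2\int_U|\nabla v|^2\dx\ \le\ (2C^2C_H+2)\int_U|\nabla v|^2\dx<\infty,
\]
with the identical bound for $v_2$ (replacing $\chi$ by $1-\chi$). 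Together with the support and zero-trace properties of the first paragraph, the finiteness of these norms yields $v_1\in H^1_0(U_1)$ and $v_2\in H^1_0(U_2)$ with $v=v_1+v_2$, which is the claim.
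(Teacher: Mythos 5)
Your proposal is correct and takes essentially the same approach as the paper's proof: an angular cut-off in polar coordinates centred at $\tx_0$, the observation that its gradient is of order $1/r$ and hence dominated by a constant times the reciprocal distance to the boundary, and Hardy's inequality to absorb the term $v^2|\nabla\chi|^2$. The only deviations are cosmetic---you take a smooth angular profile where the paper uses the explicit piecewise-linear $\frac{2\theta}{\pi}$, you apply Hardy on the local sector $\Om\cap B_\delta(\tx_0)$ rather than via $d(\tx,\partial\Om)$ on $\Om$, and you spell out the support and trace verification somewhat more explicitly than the paper does.
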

\begin{proof}
	
	We take an appropriate polar coordinate system centred at $\tx_0$ such that $B_\delta(\tx_0)\cap \Om_1$ corresponds to coordinate space $(r,\theta)\in (0,\delta)\times\left(-\frac{\pi}{2},\frac{\pi}{2}\right)$, $B_\delta(\tx_0)\cap \Om_2$ corresponds to $(0,\delta)\times\left(0,\pi\right)$. We define the function $\tilde{\rho}_1$ in such a coordinate system as 
	\begin{equation}
		\tilde{\rho}_1(r,\theta)=\left\{\begin{array}{l l}
			0 & 0\leq \theta \leq 0\\
			\frac{2\theta}{\pi} & 0<\theta<\frac{\pi}{2}\\
			1 & \theta\geq\frac{\pi}{2}
		\end{array}\right.
	\end{equation}
	$\tilde{\rho}_2$ is then defined to be $1-\tilde{\rho}_1$. It is immediate that $(\tilde{\rho}_1,\tilde{\rho}_2)$ is a continuous partition of unity of $\left(B_\delta(\tx_0)\cap \Om_1)\cup(B_\delta(\tx_0)\cap \Om_2\right)$. Each is differentiable almost everywhere on this set, and its gradient is calculated to satisfy $|\nabla \tilde{\rho}_i|^2=\frac{4}{\pi^2r^2}$ for $\theta\in \left(0,\frac{\pi}{2}\right)$ and zero otherwise. As $\tx_0\in\partial\Om$ and $r=|\tx-\tx_0|$, we then have that $r\geq d(\tx,\partial\Om)$, so 
	\begin{equation}
		|\nabla\tilde{\rho}_i|^2\leq \frac{4}{\pi^2|\tx-\tx_0|^2}\lesssim \frac{1}{d(\tx,\partial\Om)^2}
	\end{equation}
	
	Now, taking $v\in H^1_0(\Om\cap B_\delta(\tx_0))$, we have that 
	\begin{equation}
		\begin{split}
			\int_{\Om\cap B_\delta(\tx_0)}|\nabla (\rho_iv)|^2\dx \leq &2\int_{\Om\cap B_\delta(\tx_0)} \rho_i^2|\nabla v|^2+v^2|\nabla\rho_i|^2\dx \\
			\lesssim & \int_{\Om\cap B_\delta(\tx_0)} |\nabla v|^2+\frac{1}{d(\tx,\partial\Om)^2} v^2\dx
			\lesssim \int_{\Om\cap B_\delta(\tx_0)}|\nabla v|^2=\|v\|_{H^1_0}^2,
		\end{split}
	\end{equation}
	using that $|\rho_i|\leq 1$, the estimate obtained before, and Hardy's inequality in the last line. Thus we  have that $v_i=\tilde{\rho}_i v$ gives the required decomposition, with $v_i\in H^1_0(B_\delta(\tx_0)\cap \Om_i)$. 
\end{proof}

\begin{proposition}\label{propDecomp5}
	Let $\tx_0$ denote a $\bm\rho$-singular point of the pentagonal domain, and let $\delta>0$ be small. Then, for each $v\in H^1_0(\Om\cap B_\delta(\tx_0))$, there exists $v_i\in H^1_0(\Om_i\cap B_\delta(\tx_0))$ so that $v=v_1+v_2$
\end{proposition}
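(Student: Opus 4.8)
The plan is to mirror the proof of Proposition~\ref{propDecompL} almost verbatim, the only genuinely new ingredient being the identification of the local wedge geometry at the pentagon's $\bm\rho$-singular points and the corresponding explicit partition in polar coordinates. First I would observe that the pentagonal domain has exactly two $\bm\rho$-singular points, namely the vertices $(1,1)$ and $(1,-1)$ at which $\partial\Om_1$ meets $\partial\Om_2$ (the remaining vertices of $\Om$ and $\Om_i$ are $\bm\rho$-regular by Proposition~\ref{prop:EquivSingular}: near each of them $\Om$ locally coincides with a single $\Om_i$, the vertex $(0,0)$ of $\Om_2$ being engulfed by $\Om_1$). Since $\Om$ is symmetric under $y\mapsto -y$, which interchanges these two points, it suffices to treat one of them, say $\tx_0=(1,1)$.

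Next I would fix a polar coordinate system centred at $\tx_0$, chosen as in the right-hand illustration of the pentagon, so that $B_\delta(\tx_0)\cap\Om_1$ corresponds to $(r,\theta)\in(0,\delta)\times(0,\tfrac{\pi}{2})$ and $B_\delta(\tx_0)\cap\Om_2$ to $(0,\delta)\times(\tfrac{\pi}{4},\tfrac{3\pi}{4})$. Thus $\Om\cap B_\delta(\tx_0)$ is a wedge of opening $\tfrac{3\pi}{4}$, the overlap $\Om_1\cap\Om_2$ is the wedge $(\tfrac{\pi}{4},\tfrac{\pi}{2})$, and the two rays $\theta=0$ and $\theta=\tfrac{3\pi}{4}$ lie on $\partial\Om$. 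On this patch I would define the function depending only on $\theta$,
\begin{equation}
	\tilde{\rho}_1(r,\theta)=\left\{\begin{array}{l l}
		1 & 0<\theta\leq \tfrac{\pi}{4},\\
		2-\tfrac{4\theta}{\pi} & \tfrac{\pi}{4}<\theta<\tfrac{\pi}{2},\\
		0 & \theta\geq \tfrac{\pi}{2},
	\end{array}\right.
\end{equation}
and set $\tilde{\rho}_2=1-\tilde{\rho}_1$. As in Proposition~\ref{propDecompL}, $(\tilde{\rho}_1,\tilde{\rho}_2)$ is a continuous partition of unity on $\left(B_\delta(\tx_0)\cap\Om_1\right)\cup\left(B_\delta(\tx_0)\cap\Om_2\right)$. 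Since $\tilde{\rho}_1$ is supported in $\{\theta<\tfrac{\pi}{2}\}\subset\bar{\Om}_1$ and $\tilde{\rho}_2$ in $\{\theta>\tfrac{\pi}{4}\}\subset\bar{\Om}_2$, and since $v$ vanishes on the outer rays $\theta=0$ and $\theta=\tfrac{3\pi}{4}$, the products $v_i:=\tilde{\rho}_i v$ have the correct supports and boundary values, giving $v_i\in H^1_0(\Om_i\cap B_\delta(\tx_0))$ with $v=v_1+v_2$.

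It then remains to verify the $H^1$-estimates, and here every step is a transcription of the L-shape computation. Because each $\tilde{\rho}_i$ depends only on $\theta$, one has $|\nabla\tilde{\rho}_i|^2=\tfrac{1}{r^2}(\partial_\theta\tilde{\rho}_i)^2=\tfrac{16}{\pi^2 r^2}$ on the transition wedge $(\tfrac{\pi}{4},\tfrac{\pi}{2})$ and $0$ elsewhere; as $\tx_0\in\partial\Om$ and $r=|\tx-\tx_0|\geq d(\tx,\partial\Om)$, this yields $|\nabla\tilde{\rho}_i|^2\lesssim d(\tx,\partial\Om)^{-2}$, precisely the bound used before. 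I would conclude by expanding $|\nabla(\tilde{\rho}_i v)|^2\leq 2\left(\tilde{\rho}_i^2|\nabla v|^2+v^2|\nabla\tilde{\rho}_i|^2\right)$, using $|\tilde{\rho}_i|\leq 1$ together with the gradient bound, and absorbing the resulting term $\int v^2\,d(\tx,\partial\Om)^{-2}\dx$ through Hardy's inequality to obtain $\|v_i\|_{H^1_0}\lesssim\|v\|_{H^1_0}$. As this is a verbatim repetition of Proposition~\ref{propDecompL}, there is no real analytic obstacle; the only care needed lies in reading off the wedge angles correctly (a $\tfrac{3\pi}{4}$ opening split into two $\tfrac{\pi}{2}$ wedges overlapping in a $\tfrac{\pi}{4}$ wedge) and in checking, via the reflection symmetry, that both singular points share this local structure.
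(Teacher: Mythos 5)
Your proposal is correct and follows essentially the same strategy as the paper's proof: identify the two $\bm\rho$-singular points $(1,\pm 1)$, reduce to a single one by the $y\mapsto -y$ symmetry, build an explicit angular partition of unity in polar coordinates whose gradient blows up like $1/r$, and absorb the resulting term via Hardy's inequality, exactly as in Proposition~\ref{propDecompL}.

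One point is worth flagging, because it is where your write-up and the paper's differ and your choice is the right one. You make the partition transition \emph{only across the overlap wedge} ($\tilde{\rho}_1$ drops from $1$ to $0$ on $(\frac{\pi}{4},\frac{\pi}{2})$ in your coordinates), which is precisely what guarantees that $\tilde{\rho}_1 v$ is supported in the closure of $\Om_1$'s sector and $\tilde{\rho}_2 v$ in the closure of $\Om_2$'s, with the remaining boundary values killed by $v$ itself. The paper's printed formula instead interpolates linearly across the whole sector $(\frac{\pi}{4},\frac{3\pi}{4})$ of one subdomain rather than across the overlap $(\frac{\pi}{2},\frac{3\pi}{4})$ (in its rotated coordinates); consequently $\tilde{\rho}_2=1-\tilde{\rho}_1$ is positive on $(\frac{\pi}{4},\frac{\pi}{2})$, outside the closure of the other subdomain's sector, so the product $\tilde{\rho}_2 v$ would not lie in the required $H^1_0$ space. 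Together with the degenerate case distinction ``$\frac{\pi}{4}\leq\theta\leq\frac{\pi}{4}$'' and the repeated label $\Om_1$, this is evidently a typo in the paper, and your construction is the corrected version of it; the rest of your argument (gradient bound $16/(\pi^2 r^2)$ on the transition wedge, $r\geq d(\tx,\partial\Om)$ since $\tx_0\in\partial\Om$, and the Hardy absorption) matches the paper's verbatim.
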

\begin{proof}
	The construction is near identical to the case in the L-shape domain. The local environments of each $\bm\rho$-singular point are similar, and so by constructing the decomposition for one, we obtain the decomposition by a trivial transformation.
	
	We consider the point which has local environment given in polar coordinates by $(r,\theta)\in (0,\delta)\times \left(\frac{\pi}{4},\pi\right)$, with $B_\delta(\tx_0)\cap \Om_1$ corresponding to $(0,\delta)\times \left(\frac{\pi}{2},\pi\right)$ and $B_\delta(\tx_0)\cap \Om_1$ corresponding to $(0,\delta)\times \left(\frac{\pi}{4},\frac{3\pi}{4}\right)$. We define our partition of unity $\{\tilde{\rho}_i\}_{i=1}^2$ by 
	\begin{equation}
		\tilde{\rho}_1(r,\theta)=\left\{\begin{array}{l l}
			1 & \frac{\pi}{4}\leq\theta\leq\frac{\pi}{4},\\
			\frac{3}{2}-\frac{2 \theta}{\pi} & \frac{\pi}{4}<\theta<\frac{3\pi}{4},\\
			0 & \theta\geq\frac{3\pi}{4}.
		\end{array}\right.
	\end{equation}
	Here, $\tilde{\rho}_2$ is defined to be $1-\tilde{\rho}_1$. By a similar argument as before, we have that $|\nabla \tilde{\rho}_i|^2\leq \frac{C}{r^2}\leq \frac{C}{d(\tx,\partial\Om)^2}$, and so by employing Hardy's inequality we have that $v\mapsto \tilde{\rho}_iv=v_i$ defines a partition so that $v=v_1+v_2$ with $v_i \in H^1_0(\Om_i\cap B_\delta(\tx_0))$. 
\end{proof}

\begin{theorem}\label{thm:Lshapepent_app} 
	Let $\Om=\Om_1\cup \Om_2$ be the L-shape or pentagonal domain. Then, for every $v\in \Hoo$, there exists $v_i\in H^1_0(\Om_i)$ such that $v=v_1+v_2$. 
\end{theorem}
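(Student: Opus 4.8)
The plan is to build the global decomposition by gluing together two ingredients already established in the appendix: the global partition of unity $\bm\rho$, which is Lipschitz away from the finitely many $\bm\rho$-singular points, and the local decompositions near those points furnished by Propositions~\ref{propDecompL} and~\ref{propDecomp5}. The gluing is performed with an auxiliary Lipschitz partition of unity that separates a ``regular'' bulk region from small neighbourhoods of the singular points.

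First I would enumerate the $\bm\rho$-singular points $\{\tx_j\}_{j=1}^n$ — there is one for the L-shape and two for the pentagon, by the corollary following Proposition~\ref{prop:EquivSingular} — and fix $\delta>0$ small enough that the balls $B_\delta(\tx_j)$ are pairwise disjoint and each realises the wedge geometry required by the relevant local proposition. I would then pick Lipschitz cutoffs $\psi_1,\dots,\psi_n$ with $\psi_j\equiv 1$ on $B_{\delta/2}(\tx_j)$ and $\operatorname{supp}\psi_j\subset B_\delta(\tx_j)$ (vanishing near $\partial B_\delta(\tx_j)$), and set $\psi_0:=1-\sum_{j=1}^n\psi_j$, so that $\{\psi_0,\psi_1,\dots,\psi_n\}$ is a Lipschitz partition of unity on $\bar\Om$ with $\operatorname{supp}\psi_0\subset\bar\Om\setminus\bigcup_j B_{\delta/2}(\tx_j)$. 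Since multiplication by a Lipschitz function preserves $H^1$ and the vanishing trace on $\partial\Om$, this produces the splitting $v=\psi_0 v+\sum_{j=1}^n\psi_j v$ with every summand in $\Hoo$.

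For the regular part $\psi_0 v$, its support avoids all $\bm\rho$-singular points, so Corollary~\ref{corLipRegular} guarantees that each $\rho_i$ is Lipschitz there; I would set $w_i:=\rho_i\psi_0 v$ and argue $w_i\in H^1_0(\Om_i)$: the $H^1$-regularity follows from the product rule with bounded gradients, and the homogeneous boundary condition holds because $\rho_i$ vanishes on the interior interface $\partial\Om_i\cap\Om$ while $v$ vanishes on $\partial\Om_i\cap\partial\Om$. For each singular part, $\psi_j v\in H^1_0(\Om\cap B_\delta(\tx_j))$ — as $v$ vanishes on $\partial\Om$ and $\psi_j$ vanishes near $\partial B_\delta(\tx_j)$ — so Proposition~\ref{propDecompL} (L-shape) or Proposition~\ref{propDecomp5} (pentagon) supplies $\psi_j v=(\psi_j v)_1+(\psi_j v)_2$ with $(\psi_j v)_i\in H^1_0(\Om_i\cap B_\delta(\tx_j))$, which extends by zero to an element of $H^1_0(\Om_i)$. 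Collecting terms, I would define
\[
	v_i:=\rho_i\psi_0 v+\sum_{j=1}^n(\psi_j v)_i,\qquad i=1,2,
\]
so that $v_i\in H^1_0(\Om_i)$ and, using $\rho_1+\rho_2=1$ on $\Om$, $v_1+v_2=\psi_0 v+\sum_{j=1}^n\psi_j v=v$.

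The main obstacle I anticipate is the bookkeeping of boundary and interface conditions rather than any hard analysis: one must check carefully that each product $\rho_i\psi_0 v$ has zero trace on all of $\partial\Om_i$ — on the interior interface because $\rho_i$ does, on the exterior boundary because $v$ does — and that the local pieces returned by the propositions, once extended by zero, genuinely remain in $H^1_0(\Om_i)$ globally. The only delicate quantitative estimate, the control of the $H^1$-norm near the singular points via Hardy's inequality, has already been absorbed into Propositions~\ref{propDecompL} and~\ref{propDecomp5}, so here it suffices to invoke them and the remaining work is purely organisational.
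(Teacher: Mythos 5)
Your proposal is correct and follows essentially the same route as the paper's own proof: both split $v$ via cut-off functions into a regular bulk part handled by the Lipschitz partition of unity $\bm\rho$ (Corollary~\ref{corLipRegular}) and singular parts near the $\bm\rho$-singular points handled by Propositions~\ref{propDecompL} and~\ref{propDecomp5}, then recombine as $v_i=\rho_i\psi_0 v+\sum_j(\psi_j v)_i$. The only cosmetic difference is that the paper takes smooth cut-offs $\varphi(\tx-\tx_j)$ where you take Lipschitz ones, which is immaterial since only Lipschitz regularity is needed.
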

\begin{proof}
	Let $\delta\ll 1$ and take $\varphi$ to be a smooth cut-off function that satisfies $\varphi(\tx)=1$ for $|x|\leq \frac{\delta}{2}$ and $\varphi(\tx)=0$ for $|x|\geq \delta$. Let $\{x_j\}_{j=1}^k$ denote the $\bm\rho$-singular points of $\Om$, with $k=1$ in the case of the L-shape domain, $k=2$ in the pentagonal domain. Write $v=v_0+\sum\limits_{j=1}^kv_0^j$, where $v_0(\tx)=\left(1-\sum\limits_{j=1}^k\varphi(\tx-x_j)\right)v(\tx)$, and $v_0^j(\tx) =\varphi(\tx-x_j)v(\tx)$. We then have that $v_0^j\in H^1_0(\Om\cap B_\delta(\tx_j))$, and $v_0\in H^1_0(\Om\setminus\bigcup\limits_{j=1}^kB_\delta(\tx_j))$. By Proposition~\ref{propDecompL} in the case of the L-shape domain and Proposition~\ref{propDecomp5} in the case of the pentagonal domain, we may write $v_0^j = v_{0,1}^j+v_{0,2}^j$ with $v_{0,i}^j\in H^1_0(\Om_i\cap B_\delta(\tx_j))$. By Proposition~\ref{propDecompL}, the partition of unity $(\rho_1,\rho_2)$ is Lipschitz on the support of $v_0$, so $\rho_i v_0\in H^1_0(\Om_i)$. Thus we may define $v_i=\rho_iv_0+\sum\limits_{j=1}^k v_{0,i}^j$, with $v_i\in H^1_0(\Om_i)$ and $v_1+v_2=v$. 
\end{proof}

\begin{remark}[Extension to $\HoD$]
	An extension of our method is straightforward in the case of $\GD \neq \partial \Om$, under the assumption that, for some $\delta>0$,
	 $\partial\Om\cap\left(\bigcup\limits_{j=1}^kB_\delta(\tx_j)\right)\subset\GD$. That is, the $\bm\rho$-singular points are separated from $\partial\Om\setminus\GD$. This is a consequence of Proposition~\ref{propDecompL}, as our partition of unity is Lipschitz up to the boundary outside of the $\bm\rho$-singular points. We note that such decomposition may be impossible in some cases, such as the L-shape domain with $\GD=\emptyset$ and the constant function $v(\tx)=1$. 
\end{remark}

\begin{remark}[Extension to general Lipschitz polygons]
	To extend this procedure to general Lipschitz polygons, it is clear that the only necessity is an equivalent local decomposition to Proposition~\ref{propDecompL} and Proposition~\ref{propDecomp5} at each $\bm\rho$-singular point of $\Om$. The $\bm\rho$-singular points may be readily identified for a given geometry, and a similar decomposition in polar coordinates can be easily imagined for a wide variety of vertex types. As the partition of unity $\{\rho_i\}_{i=1}^m$ given in Definition~\ref{defPartitionUnity} is Lipschitz away from the $\bm\rho$-singular points (c.f. Proposition~\ref{propDecompL}), the same decomposition via local cut-off functions can be employed. 
\end{remark}

\section{Bounds over the $\star$-norm}\label{app:1}
Given a Hilbert space $H$ and a set of closed subspaces $ \{H_i\}_{i=1}^m$ within $H$, we define the minimal energy decomposition of $v \in H$ as
\begin{equation}\label{def.opt.decomp}
	\{P_iv\}_{i=1}^m := \mathrm{arg}\min\left\{\sum\limits_{i=1}^m \|v_i\|_{H}^2:v_i\in H_i \text{ and } v = \sum\limits_{i=1}^m v_i\right\}. 
\end{equation}

To prove Proposition~\ref{prop:star}, we refer to the following lemma, which states the well-posedness of the mappings $\{P_i\}_{i=1}^m$.

\begin{lemma}\label{prop:maps}
	Let $H$ be a Hilbert space. If there exists a set of closed subspaces $\{H_i\}_{i=1}^m$ satisfying Assumption~\ref{as:a2},  then the mappings $\{P_i\}_{i=1}^m$ resulting from the minimal energy decomposition of $v \in H$ are bounded linear operators from $H\to H_i$. Furthermore, 
	\begin{equation}
		\xi \left( \{H_i\}_{i=1}^m \right):=\sup\limits_{v\in H}\frac{1}{\|v\|_{H}}\min\left\{\sqrt{\sum\limits_{i=1}^m \|v_i\|_{H}^2}:v_i\in H_i \text{ and } v = \sum\limits_{i=1}^m v_i \right\}<+\infty. 
	\end{equation}
\end{lemma}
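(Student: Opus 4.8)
The plan is to recast the minimal energy decomposition \eqref{def.opt.decomp} as a minimal-norm preimage problem for a single bounded linear operator, and then to extract both the boundedness of the $P_i$ and the finiteness of $\xi$ from the bounded inverse theorem. First I would form the product Hilbert space $\mathcal{H} := \prod_{i=1}^m H_i$, equipped with the inner product $\langle (u_i),(w_i)\rangle_{\mathcal{H}} := \sum_{i=1}^m (u_i,w_i)_H$; since each $H_i$ is a closed subspace of the Hilbert space $H$, each is complete, so $\mathcal{H}$ is itself a Hilbert space with $\|(w_i)\|_{\mathcal{H}}^2=\sum_i\|w_i\|_H^2$. I would then introduce the summation operator $S:\mathcal{H}\to H$, $S((w_i))=\sum_{i=1}^m w_i$, which is linear and bounded since $\|S((w_i))\|_H\le\sum_i\|w_i\|_H\le\sqrt{m}\,\|(w_i)\|_{\mathcal{H}}$ by Cauchy--Schwarz. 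The key observation is that Assumption~\ref{as:a2} is exactly the surjectivity of $S$, and that the minimization in \eqref{def.opt.decomp} is precisely the search for the minimal-$\mathcal{H}$-norm element of the affine fibre $S^{-1}(\{v\})$.

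Next I would settle existence and uniqueness, hence well-definedness of the $P_i$. The fibre $S^{-1}(\{v\})$ is nonempty (by surjectivity), closed (by continuity of $S$), and convex, so the projection theorem in $\mathcal{H}$ gives it a unique element of minimal norm, which I denote $Tv$. Writing $N:=\ker S$ (a closed subspace), this minimal-norm element is the unique point of $S^{-1}(\{v\})\cap N^{\perp}$, and I set $P_iv:=\pi_i(Tv)$ with $\pi_i:\mathcal{H}\to H_i$ the $i$-th coordinate projection. This makes $\{P_iv\}_{i=1}^m$ well-defined and, by construction, the unique solution of \eqref{def.opt.decomp}.

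I would then obtain linearity and boundedness in one stroke by identifying $T$ with the inverse of a restricted operator. The restriction $S|_{N^{\perp}}:N^{\perp}\to H$ is injective, since its kernel is $N\cap N^{\perp}=\{0\}$, and surjective, since $S$ is surjective and $\mathcal{H}=N\oplus N^{\perp}$; hence it is a continuous linear bijection between Hilbert spaces. By the bounded inverse theorem its inverse is bounded, and because the minimal-norm preimage $Tv$ lies in $N^{\perp}$ with $S(Tv)=v$, we have exactly $T=(S|_{N^{\perp}})^{-1}$. Consequently $T$ is a bounded linear operator, and each $P_i=\pi_i\circ T$ is a composition of bounded linear maps, hence a bounded linear operator $H\to H_i$. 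Finally, since $\|Tv\|_{\mathcal{H}}^2=\sum_i\|P_iv\|_H^2$ equals the minimum in \eqref{def.opt.decomp}, the defining supremum reduces to $\xi(\{H_i\}_{i=1}^m)=\sup_{v\neq 0}\|Tv\|_{\mathcal{H}}/\|v\|_H=\|T\|$, the operator norm of a bounded operator, which is finite.

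I expect the main obstacle to be the \emph{uniform} bound rather than the pointwise one: for each fixed $v$, finiteness of the minimum is immediate from Assumption~\ref{as:a2} and the projection theorem, but finiteness of the supremum over all $v$ is the real content. This is exactly where completeness is indispensable, as it is the open mapping / bounded inverse theorem applied to $S|_{N^{\perp}}$ that upgrades fibrewise solvability into a global operator-norm estimate; accordingly, the verification that $S|_{N^{\perp}}$ is a bounded bijection is the step I would treat most carefully.
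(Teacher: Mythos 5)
Your proposal is correct and follows essentially the same route as the paper's proof: both form the product Hilbert space $\prod_{i=1}^m H_i$, view Assumption~\ref{as:a2} as surjectivity of the summation operator, identify the minimal-energy decomposition with the inverse of that operator restricted to the orthogonal complement of its kernel, and invoke the bounded inverse theorem to conclude that $\xi$ equals a finite operator norm. The only differences are cosmetic (you verify closedness and convexity of the fibre explicitly via the projection theorem and give the $\sqrt{m}$ bound for the summation operator, where the paper argues via strict convexity and the direct decomposition of a general preimage), so there is nothing to correct.
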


\begin{proof}
	To prove this lemma, we first show that the mapping $(v_i)_{i=1}^m\to \sum\limits_{i=1}^m v_i$ is a closed mapping from a particular Hilbert space to $H$. We can then show that it has a bounded, linear inverse on the orthogonal complement of its kernel via the bounded inverse theorem, which we show to be the minimal energy decomposition. It then results that $\xi \left( \{H_i\}_{i=1}^m \right)$ is the norm of this mapping, which is necessarily finite.

	We start by defining the Hilbert space $\hat{H}=\prod\limits_{i=1}^mH_i$, with the natural inner product \[\langle (v_i^1)_{i=1}^m,(v_i^2)_{i=1}^m \rangle_{\hat{H}}= \sum\limits_{i=1}^m\langle v^1_i,v_i^2\rangle_H.\]
	
	Using this definition, we observe that the minimisation problem~\eqref{def.opt.decomp} admits a unique solution, so that $\{P_i v\}_{i=1}^m$ is well defined, as it corresponds to the minimisation of a strictly convex norm over a Hilbert space with a closed constraint.
	
	We next consider the map $A:\hat{H}\to H$ given by $A(v_i)_{i=1}^m=\sum\limits_{i=1}^mv_i$, which is clearly continuous. In particular, its kernel is closed. Furthermore, its range is $H$ by assumption. In this case, we may perform an orthogonal decomposition of $\hat{H}=\ker(A)\oplus\ker(A)^\perp$, where $A$ is injective on $\ker(A)^\perp$. In particular, we see that if $\tilde{A}:\text{ker}(A)^\perp\to H$ is the restriction of $A$, then $\tilde{A}$ is a continuous, linear bijection between Hilbert spaces, and therefore admits a continuous inverse $\tilde{A}^{-1}:H\to \ker(A)^{\perp}$. We note that $\tilde{A}^{-1}v$ solves the corresponding minimisation problem~\eqref{def.opt.decomp}, as we decompose $(v_i)_{i=1}^m=(v_i^\perp)_{i=1}^m \oplus (v_i^\parallel)_{i=1}^m \in \text{ker}(A)^\perp\oplus \text{ker}(A)$. Then $A(v_i)_{i=1}^m=A(v_i^\perp)_{i=1}^m$ and thus if $A(v_i)_{i=1}^m =v$, 
	\begin{equation}
		\|(v_i)_{i=1}^m \|^2_{\hat{H}}=\|(v_i^\perp)_{i=1}^m \|^2_{\hat{H}}+\|(v_i^\parallel)_{i=1}^m \|^2_{\hat{H}}\geq \|(v_i^\perp)_{i=1}^m\|^2_{\hat{H}}.
	\end{equation}
	We thus conclude that $\{P_i v\}_{i=1}^m=\tilde{A}^{-1}v$ is a well-defined bounded linear operator.  
	
	Finally, we observe that the operator norm of $\tilde{A}^{-1}$, which is necessarily finite, can be expressed as
	\begin{equation}\begin{split}
			\|\tilde{A}^{-1}\| =&\sup\limits_{v\in H}\frac{\|\tilde{A}^{-1}v\|_{\hat{H}}}{\|v\|_{H}}
			=\sup\limits_{v\in H}\frac{1}{\|v\|_{H}}\sqrt{\sum\limits_{i=1}^m \|P_iv\|_{H}^2}\\
			=&\sup\limits_{v\in H}\frac{1}{\|v\|_{H}}\min\left\{\sqrt{\sum\limits_{i=1}^m \|v_i\|_{H}^2}:v_i\in H,\, \sum\limits_{i=1}^m v_i=v\right\}=\xi \left( \{H_i\}_{i=1}^m \right)
		\end{split}
	\end{equation}
\end{proof}

\subsection{Proof of Proposition~\ref{prop:star}}

\begin{proof}
	For all $i=1,\dots m$, let $Q_i$ correspond to the orthogonal projection from $H$ onto $H_i$. Since $P_iH=H_i$ for all $i$ and $Q_i$ is the identity on $H_i$, then $Q_iP_i=P_i$. Furthermore, this implies that $P_i^T=P_i^TQ_i$, as $Q_i=Q_i^T$. 
	Consequently, one has that 
	\begin{equation}
		\| f \| _{H_i^*}= \| Q_if \| _{H^*} \quad \forall f\in H^*.
	\end{equation}
	
	To show the lower bound in~\eqref{eq:starbound}, it suffices to observe that since $Q_i$ is an orthogonal projection operator, it has an operator norm given by $1$, thus 
	\begin{equation}
		\| f \|_{\star}= \sqrt{\sum\limits_{i=1}^m \| f \|_{H_i^*}^2}= \sqrt{\sum\limits_{i=1}^m \| Q_if \| _{H^*}^2} \leq \sqrt{\sum\limits_{i=1}^m \| Q_i \|^2\| f \|_{H^*}^2}= \sqrt{m} \| f \|_{H^*}. 
	\end{equation}
	
	For the upper bound, we observe that $P_i^Tf = P_i^TQ_if $, for all $P_i \in \{P_i\}_{i=1}^m$. Thus, we obtain 
	\begin{equation}
		\begin{split}
			\| f \|_{H^*} =& \| \sum\limits_{i=1}^m P_i^Tf \|_{H^*}= \| \sum\limits_{i=1}^m P_i^TQ_if \|_{H^*} \leq \sum\limits_{i=1}^m \| P_i^TQ_if \|_{H^*} \\ 
			\leq & \sum\limits_{i=1}^m \| P_i \| \| Q_if \|_{H^*} \leq \left(\sum\limits_{i=1}^m \| P_i \| ^2\right)^\frac{1}{2}\left(\sum\limits_{i=1}^m \| Q_if \|_{H^*} ^2\right)^\frac{1}{2}.
		\end{split}
	\end{equation}
	Using Lemma~\ref{prop:maps}, and noting that $$ \left(\sum\limits_{i=1}^m \|P_i\|^2\right)^\frac{1}{2}=\xi \left( \{H_i\}_{i=1}^m \right),$$
	we conclude  $\| f \|_{H^*}  \leq \xi \left( \{H_i\}_{i=1}^m \right)\| f \|_{\star}$.

\end{proof}

\section{The partition of unity for the L-shape}\label{app:3}
In this appendix, we derive the constants of equivalence for the case of the L-shape domain guaranteed by Theorem~\ref{thm:Lshapepent_app}. We consider the cover of the domain as represented in Figure~\ref{figCase1Lshape} and denote $U_1=\Omega_1\setminus\overline{\Omega_2}=(-1,0)\times(0,1)$, $U_2=\Omega_2\setminus\overline{\Omega_1}=(0,1)\times(-1,0)$ and $U_{1,2}=\Omega_1\cap\Omega_2=U_{1,2}$, as shown in Figure~\ref{figCase1Lshape_app}.
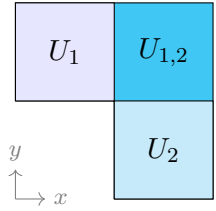
\begin{figure}[h!]
	\centering
	\begin{tikzpicture}[scale=1.3]
		\begin{scope}
			\draw[fill=cyan!60, opacity=1] (0,0) -- (-1,0) -- (-1,1) -- 
			(1,1) -- (1,-1) -- (0,-1) -- (0,0);
			\draw[fill=cyan!20,opacity=1] (0,-1)--(1,-1)--(1,0)--(0,0)--(0,-1);
			\draw[fill=blue!10,opacity=1] (-1,0)--(-1,1)--(0,1)--(0,0)--(-1,0);
			\node at (-0.5,0.5) {$U_1$};
			\node at (0.5,-0.5) {$U_2$};
			\node at (0.5,0.5) {$U_{1,2}$};
			{\footnotesize \draw[->,gray] (-1,-1) -- (-0.7,-1) node[right] {$x$};
				\draw[->,gray] (-1,-1) -- (-1,-0.7) node[above] {$y$};}
		\end{scope}
	\end{tikzpicture}
	\caption{The subdomains of the L-shape.}
	\label{figCase1Lshape_app}
\end{figure}

In the following analysis, we consider certain harmonic functions defined on $U_{1,2}$ that, for a given $v\in \Hoo$, are defined to be the weak solutions to:
\begin{equation}\label{eq:laplaciansv1v2}
	\begin{array}{r l l}
		\Delta v_1^h=&0 & \text{ in } U_{1,2}, \\
		v_1^h = & v & \text{ on } \Omega_1\cap \partial U_{1,2}, \\
		v_1^h = & 0 & \text{ on } \Omega_2\cap \partial U_{1,2}, \\
		v_1^h = & 0 & \text{ on } \partial\Omega\cap\partial U_{1,2},
	\end{array} \quad \text{ and } \quad  \begin{array}{r l l}
		\Delta v_2^h=&0 & \text{ in } U_{1,2},\\
		v_2^h = & 0 & \text{ on } \Omega_1\cap \partial U_{1,2},\\
		v_2^h = & v & \text{ on } \Omega_2\cap \partial U_{1,2},\\
		v_2^h = & 0 & \text{ on } \partial\Omega\cap\partial U_{1,2}.
	\end{array}	
\end{equation}

 We remark that as Theorem~\ref{thm:Lshapepent_app} guarantees that, for all $v\in \Hoo$, there exists $v_i\in H^1_0(\Omega_i)$ with $v_1+v_2=v$, the boundary conditions that appear in the PDEs in \eqref{eq:laplaciansv1v2} are all in $H^{\frac{1}{2}}$ of their respective domains, and thus the PDEs admit unique solutions. 

First, we have a preliminary result on the interior minimisation problem.
\begin{proposition}\label{prop:app3}
	Let $v\in \Hoo$ be given. Then the solutions $v_1,v_2$ to 
	\begin{equation}\label{eqMinLShapeConstant}\min\left\{\|v_1\|_{\Hoo}^2+\|v_2\|_{\Hoo}^2: v_1\in H^1_0(\Omega_1),\,v_2\in H^1_0(\Omega_2),\, v_1+v_2=v\right\}\end{equation}
	are given piecewise by 
	
	\begin{equation}
		v_1(\tx) = \left\{\begin{array}{l l}
			v & \tx\in U_1,\\
			\frac{1}{2}\left(v+v_1^h-v_2^h\right) & \tx\in U_{1,2},\\
			0 & \tx\in U_2,
		\end{array}\right. \text{ and } \,\, v_2(\tx)=\left\{\begin{array}{l l}
			0 & \tx\in U_1,\\
			\frac{1}{2}\left(v-v_1^h+v_2^h\right) & \tx\in U_{1,2},\\
			v & \tx\in U_2.
		\end{array}\right.
	\end{equation}
	
	Furthermore, the value of the minimum in \eqref{eqMinLShapeConstant} is given by 
	$$
	\int_\Omega|\nabla v|^2 d\tx+\frac{1}{2}\int_{U_{1,2}}|\nabla(v_1^h-v_2^h)|^2-|\nabla v|^2\dx.
	$$
\end{proposition}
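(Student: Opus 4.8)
The plan is to reduce the global minimisation \eqref{eqMinLShapeConstant} to a Dirichlet problem on the overlap $U_{1,2}$ and then invoke the Dirichlet principle. First I would exploit that any $v_1\in H^1_0(\Omega_1)$ vanishes outside $\overline{\Omega_1}$ and any $v_2\in H^1_0(\Omega_2)$ vanishes outside $\overline{\Omega_2}$. Since $U_2\cap\Omega_1=\emptyset$ and $U_1\cap\Omega_2=\emptyset$, the constraint $v_1+v_2=v$ forces $v_1=v,\ v_2=0$ on $U_1$ and $v_1=0,\ v_2=v$ on $U_2$; the only genuine freedom is the pair $(v_1,v_2)$ on $U_{1,2}$, where $v_1+v_2=v$. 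Trace continuity across the two interfaces then fixes the Dirichlet data on $\partial U_{1,2}$: on $\Omega_1\cap\partial U_{1,2}$ one has $v_1=v,\ v_2=0$, on $\Omega_2\cap\partial U_{1,2}$ one has $v_1=0,\ v_2=v$, and on $\partial\Omega\cap\partial U_{1,2}$ both vanish. Theorem~\ref{thm:Lshapepent_app}, together with the remark following \eqref{eq:laplaciansv1v2}, guarantees these data lie in $H^{1/2}$, so all the auxiliary problems are well posed.

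Next I would change variables to $w:=v_1-v_2$, so that $v_1=\tfrac12(v+w)$ and $v_2=\tfrac12(v-w)$, and use the elementary identity $|\nabla v_1|^2+|\nabla v_2|^2=\tfrac12\big(|\nabla v|^2+|\nabla w|^2\big)$. Integrating and inserting the forced values on $U_1\cup U_2$, the objective becomes
\begin{equation}
	\|v_1\|_{\Hoo}^2+\|v_2\|_{\Hoo}^2=\int_{U_1\cup U_2}|\nabla v|^2\dx+\frac12\int_{U_{1,2}}\big(|\nabla v|^2+|\nabla w|^2\big)\dx,
\end{equation}
in which only $\int_{U_{1,2}}|\nabla w|^2\dx$ depends on the choice of decomposition. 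The boundary data of $w$ on $U_{1,2}$ are $w=v$ on $\Omega_1\cap\partial U_{1,2}$, $w=-v$ on $\Omega_2\cap\partial U_{1,2}$ and $w=0$ on $\partial\Omega\cap\partial U_{1,2}$, which are exactly the boundary conditions defining $v_1^h-v_2^h$ in \eqref{eq:laplaciansv1v2}.

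Finally, by the Dirichlet principle the function minimising $\int_{U_{1,2}}|\nabla w|^2\dx$ over all $H^1(U_{1,2})$ functions with the prescribed trace is the harmonic extension of that trace, so the minimiser is $w=v_1^h-v_2^h$. Substituting back gives $v_1=\tfrac12(v+v_1^h-v_2^h)$ and $v_2=\tfrac12(v-v_1^h+v_2^h)$ on $U_{1,2}$, i.e. the claimed piecewise form, and inserting $w=v_1^h-v_2^h$ into the displayed objective together with $\int_{U_1\cup U_2}|\nabla v|^2+\tfrac12\int_{U_{1,2}}|\nabla v|^2=\int_\Omega|\nabla v|^2-\tfrac12\int_{U_{1,2}}|\nabla v|^2$ reproduces the stated value $\int_\Omega|\nabla v|^2\dx+\tfrac12\int_{U_{1,2}}\big(|\nabla(v_1^h-v_2^h)|^2-|\nabla v|^2\big)\dx$. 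I expect the main obstacle to be the careful bookkeeping of the reduction step: checking that the glued candidates genuinely belong to $H^1_0(\Omega_i)$ (trace matching across each interface and vanishing on $\partial\Omega_i$) and that every admissible pair corresponds to an admissible $w$, so the reduced Dirichlet problem is exactly equivalent to \eqref{eqMinLShapeConstant}. Existence and uniqueness of the minimiser are not an issue, following from strict convexity of the Dirichlet energy over an affine constraint set, or directly from Lemma~\ref{prop:maps}.
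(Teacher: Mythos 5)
Your proof is correct, and it reaches the same reduced problem on the overlap $U_{1,2}$ as the paper, but it solves that reduced problem by a different device. The paper eliminates $v_2=v-v_1$, expands the objective into $\int_\Om|\nabla v|^2\dx+\int_{U_{1,2}}2|\nabla v_1|^2-2\nabla v_1\cdot\nabla v\dx$, and derives the Euler--Lagrange equation $\Delta v_1=\tfrac{1}{2}\Delta v$ in $U_{1,2}$ with mixed boundary data, after which linearity and \eqref{eq:laplaciansv1v2} give $v_1=\tfrac{1}{2}\left(v+v_1^h-v_2^h\right)$. You instead pass to the symmetric variable $w=v_1-v_2$ and use the parallelogram identity, so that the only part of the objective depending on the decomposition is the pure Dirichlet energy $\tfrac{1}{2}\int_{U_{1,2}}|\nabla w|^2\dx$ subject to the trace data $w=v$, $-v$, $0$ on the three pieces of $\partial U_{1,2}$; the Dirichlet principle then identifies the minimiser with the harmonic extension of that trace, which is $v_1^h-v_2^h$ by uniqueness of the Dirichlet problem. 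The two parametrisations are linearly equivalent (your $w$ equals $2v_1-v$), so the mathematical content coincides, but your route buys two things: it avoids the Euler--Lagrange equation with the source $\tfrac{1}{2}\Delta v$, which for general $v\in\Hoo$ only makes sense distributionally in $H^{-1}$ (a point the paper leaves implicit), and both the formula for the minimiser and the value of the minimum fall out of a single substitution rather than a separate identification step followed by a computation. Conversely, the paper's elimination is slightly more economical in setting up the admissible class. Your closing remarks on trace matching across the interfaces, and on every admissible pair corresponding to an admissible $w$, are exactly the bookkeeping needed to make either argument complete, and the paper glosses over the same points.
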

\begin{proof}
	The conditions that $v_i\in H^1_0(\Omega_i)$ and $v_1+v_2=v$ immediately impose their values on $U_1$ and $U_2$, thus we need only obtain their behaviour on $U_{1,2}$. 
	We note that we may elimate $v_2$, as $v_2=v-v_1$, where $v_1=v$ on $U_1$ and $v_1=0$ on $U_2$. Thus we may reduce this to 
	\begin{equation}
		\min\left\{\|v_1\|_{\Hoo}^2+\|v-v_1\|_{\Hoo}^2: v_1\in H^1_0(\Omega_1),\,v_1|_{U_1}=v\right\},
	\end{equation}
	giving a minimisation problem over a single function. We have that 
	\begin{equation}
		\|v_1\|_{\Hoo}^2+\|v-v_1\|_{\Hoo}^2=\int_{\Omega}|\nabla v|^2\dx +\int_{U_{1,2}}2|\nabla v_1|^2-2\nabla v_1\cdot \nabla v\, d\tx,
	\end{equation}
	where we have the boundary condition that $v_1=v$ on $(\partial U_{1,2})\cap \Omega_1$ and $v_1=0$ on $\partial U_{1,2}\setminus \Omega_1$.
	
	The corresponding minimisation problem over $v_1$ therefore admits a unique solution $v_1$, which satisfies weakly the PDE
	\begin{equation}
		\begin{array}{r l l}
			\Delta v_1=&\frac{1}{2}\Delta v & \text{ in } U_{1,2},\\
			v_1 = & v & \text{ on } \Omega_1\cap \partial U_{1,2}, \\
			v_1 = & 0 & \text{ on } \Omega_2\cap \partial U_{1,2} \text{ and }  \partial\Omega\cap\partial U_{1,2}. 
		\end{array}
	\end{equation}
	Recalling \eqref{eq:laplaciansv1v2}, we may thus decompose $v_1$ as $v_1=\frac{1}{2}\left(v+v_1^h-v_2^h\right)$ on $U_{1,2}$. Correspondingly, $v_2=v-v_1 = \frac{1}{2}\left(v-v_1^h+v_2^h\right)$. Returning to the value of the integral,  
	\begin{equation}
		\begin{split}
			\int_{\Omega}|\nabla v|^2\dx +\int_{U_{1,2}}2|\nabla v_1|^2-2\nabla v_1\cdot \nabla v\, d\tx=\int_\Omega|\nabla v|^2+\frac{1}{2}\int_{U_{1,2}}|\nabla(v_1^h-v_2^h)|^2-|\nabla v|^2\dx.
		\end{split}
	\end{equation}
\end{proof}

We now turn to the analysis of the maximisation problem defining the operator norm. Consider $v$ to be fixed on $\partial U_{1,2}$, and separate the maximisation problem on each subdomain $U_1$, $U_2$, and $U_{1,2}$.

\begin{proposition}
	Let $V=\{v\in \Hoo:v|_{\Omega\setminus U_{1,2}}=v_0\}$. Then 
	\begin{equation}
		\max\limits_{v\in V} \left\lbrace  \frac{\|v_1\|_{H^1_0(\Om)}^2+\|v_2\|_{H^1_0(\Om)}^2}{\|v\|_{H^1_0(\Om)}^2} \right\rbrace  = 1+\frac{-2\int_{U_{1,2}}\nabla v_1^h\cdot\nabla v_2^h\dx}{\int_{U_{1,2}}|\nabla (v_1^h+v_2^h)|^2\dx + \int_{\Omega\setminus U_{1,2}}|\nabla v|^2\dx}.
	\end{equation}
\end{proposition}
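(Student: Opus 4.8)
The plan is to exploit Proposition~\ref{prop:app3}, which already supplies a closed form for the numerator $\|v_1\|_{\Hoo}^2+\|v_2\|_{\Hoo}^2$ of the quotient as the minimal energy of the decomposition. Substituting that expression and splitting each integral over $\Om$ into its parts over $\Om\setminus U_{1,2}$ and over $U_{1,2}$, I would first record the two identities
\[
\|v_1\|_{\Hoo}^2+\|v_2\|_{\Hoo}^2=\int_{\Om\setminus U_{1,2}}|\nabla v|^2\dx+\tfrac12\int_{U_{1,2}}|\nabla v|^2\dx+\tfrac12\int_{U_{1,2}}|\nabla(v_1^h-v_2^h)|^2\dx,
\]
\[
\|v\|_{\Hoo}^2=\int_{\Om\setminus U_{1,2}}|\nabla v|^2\dx+\int_{U_{1,2}}|\nabla v|^2\dx .
\]

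The crucial observation is that, as $v$ ranges over $V$, the boundary data on $\partial U_{1,2}$ are frozen: on the two interface edges the trace of $v$ coincides with that of $v_0$, while on $\partial\Om\cap\partial U_{1,2}$ it vanishes because $v\in\Hoo$. Hence the harmonic lifts $v_1^h,v_2^h$ of~\eqref{eq:laplaciansv1v2} do not depend on the choice of $v\in V$, and neither do the constants $C_0:=\int_{\Om\setminus U_{1,2}}|\nabla v_0|^2\dx$ and $A:=\tfrac12\int_{U_{1,2}}|\nabla(v_1^h-v_2^h)|^2\dx$. The only quantity that varies is $t:=\int_{U_{1,2}}|\nabla v|^2\dx$, and since $v|_{\partial U_{1,2}}$ is prescribed, $t$ sweeps the ray $[t_{\min},\infty)$, where $t_{\min}$ is the Dirichlet energy of the harmonic extension of that boundary data. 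By superposition this extension is exactly $v_1^h+v_2^h$, so $t_{\min}=\int_{U_{1,2}}|\nabla(v_1^h+v_2^h)|^2\dx$. The optimization therefore reduces to maximizing the scalar function $R(t)=\dfrac{\tfrac12 t+C_0+A}{t+C_0}$ over $t\ge t_{\min}$.

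A direct computation gives $R'(t)=\dfrac{-\tfrac12 C_0-A}{(t+C_0)^2}\le 0$, since $C_0,A\ge 0$; thus $R$ is non-increasing and attains its maximum at $t=t_{\min}$, that is, when $v$ equals the harmonic function $v_1^h+v_2^h$ on $U_{1,2}$. Evaluating $R(t_{\min})$ and applying the parallelogram identity $\tfrac12|\nabla(v_1^h+v_2^h)|^2+\tfrac12|\nabla(v_1^h-v_2^h)|^2=|\nabla v_1^h|^2+|\nabla v_2^h|^2$ to the numerator, then writing $\tfrac{N}{D}=1+\tfrac{N-D}{D}$ and expanding $|\nabla(v_1^h+v_2^h)|^2$ so that $N-D=-2\int_{U_{1,2}}\nabla v_1^h\cdot\nabla v_2^h\dx$, produces exactly the asserted identity. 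I expect the main obstacle to be the careful bookkeeping in the first step together with the justification that the admissible interior energies $t$ fill the entire ray $[t_{\min},\infty)$ while $v_1^h,v_2^h$ remain fixed; once these points are secured, the maximization collapses to the elementary monotone scalar argument above and existence of the maximizer follows since the supremum is achieved at the finite endpoint.
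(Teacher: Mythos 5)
Your proof is correct and takes essentially the same route as the paper's: both use Proposition~\ref{prop:app3} to rewrite the quotient, observe that it is a non-increasing function of the interior energy $\int_{U_{1,2}}|\nabla v|^2\dx$ (with $v_1^h,v_2^h$ fixed since the trace on $\partial U_{1,2}$ is frozen for $v\in V$), identify the maximizer as the harmonic extension $v=v_1^h+v_2^h$ on $U_{1,2}$, and simplify. Your explicit scalar reduction $R(t)$ and the parallelogram identity are merely cleaner bookkeeping of the same computation the paper performs by direct expansion.
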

\begin{proof}
	From our previous result, we have that 
	\begin{equation}\label{eqMaxProblem1}
		\begin{split}
			\max\limits_{v\in V} \left\lbrace \frac{\|v_1\|_{H^1_0(\Om)}^2+\|v_2\|_{H^1_0(\Om)}^2}{\|v\|_{H^1_0(\Om)}^2} \right\rbrace = & \max\limits_{v\in V} \left\lbrace  1+\frac{\int_{U_{1,2}}|\nabla (v^h_1-v^h_2)|^2-|\nabla v|^2\dx}{2\left(\int_{U_{1,2}}|\nabla v|^2\dx + \int_{\Omega\setminus U_{1,2}}|\nabla v|^2\dx\right)}\right\rbrace  .
		\end{split}
	\end{equation}
	As the integrals are non-negative, we have that the expression is a decreasing function of $\int_{U_{1,2}}|\nabla v|^2\dx$. In particular, the maximum is attained at the (unique) minimiser of $\min\limits_{v\in V}\left\lbrace \int_{U_{1,2}}|\nabla v|^2\dx\right\rbrace $. As $v$ is fixed on $\Omega\setminus U_{1,2}$, this prescribes a boundary condition, and we see that the minimiser of the Dirichlet energy --or, equivalently, the maximiser of \eqref{eqMaxProblem1}-- satisfies the PDE 
	\begin{equation}
		\begin{array}{r l l}
			\Delta v=&0 & \text{ in } U_{1,2},\\
			v = & v_0 & \text{ on } \partial U_{1,2}. 
		\end{array}
	\end{equation}
	Recalling \eqref{eq:laplaciansv1v2} and that $v_0=0$ on $\partial\Omega$, this means that $v = v_1^h+v_2^h$. Thus,
	\begin{equation}
		\max\limits_{v\in V} \left\lbrace 1+\frac{\int_{U_{1,2}}|\nabla (v^h_1-v^h_2)|^2-|\nabla v|^2\dx}{2\left(\int_{U_{1,2}}|\nabla v|^2\dx + \int_{\Omega\setminus U_{1,2}}|\nabla v|^2\dx\right)} \right\rbrace = 1+\frac{-2\int_{U_{1,2}}\nabla v_1^h\cdot\nabla v_2^h\dx}{\int_{U_{1,2}}|\nabla (v_1^h+v_2^h)|^2\dx + \int_{\Omega\setminus U_{1,2}}|\nabla v|^2\dx}.
	\end{equation}
\end{proof}


\begin{proposition}
	Let $V_2=\{v\in \Hoo:v|_{\partial U_{1,2}}=v_0\}$. Then
	\begin{equation}
		\begin{split}
			\sup\limits_{v\in V_2} \left\lbrace  1+\frac{-2\int_{U_{1,2}}\nabla v_1^h\cdot\nabla v_2^h\dx}{\int_{U_{1,2}}|\nabla (v_1^h+v_2^h)|^2\dx + \int_{\Omega\setminus U_{1,2}}|\nabla v|^2\dx} \right\rbrace  = \\ \max\left(1,\frac{\int_{U_{1,2}}|\nabla v_1^h|^2+|\nabla v_2^h|^2\dx}{\int_{U_{1,2}}|\nabla v_1^h|^2+|\nabla v_2^h|^2+\nabla v_1^h\cdot\nabla v_2^h\dx}\right)
		\end{split}
	\end{equation}
\end{proposition}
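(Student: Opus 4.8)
The plan is to exploit a key structural feature of the objective: the harmonic functions $v_1^h$ and $v_2^h$ from \eqref{eq:laplaciansv1v2} depend only on the trace $v|_{\partial U_{1,2}} = v_0$, and are therefore \emph{fixed} across the admissible set $V_2$. Indeed, $v_1^h$ is determined by $v_0$ on $\Om_1\cap\partial U_{1,2}$ and $v_2^h$ by $v_0$ on $\Om_2\cap\partial U_{1,2}$, both of which are prescribed. Writing
\[
a = \int_{U_{1,2}}|\nabla v_1^h|^2\dx,\qquad b = \int_{U_{1,2}}|\nabla v_2^h|^2\dx,\qquad c = \int_{U_{1,2}}\nabla v_1^h\cdot\nabla v_2^h\dx,
\]
the numerator $-2c$ and the term $\int_{U_{1,2}}|\nabla(v_1^h+v_2^h)|^2\dx = a+2c+b$ are constants over $V_2$; the only varying quantity is $B := \int_{\Om\setminus U_{1,2}}|\nabla v|^2\dx$. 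Thus the whole problem collapses to optimizing the scalar function $f(B) = 1 + \frac{-2c}{(a+2c+b)+B}$ over the attainable range of $B$.

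Next I would pin down that range. Since $\Om\setminus U_{1,2} = U_1\cup U_2$ and every $v\in V_2$ must equal $v_0$ on the edges shared with $U_{1,2}$ and vanish on $\partial\Om$, the value $B$ can be made arbitrarily large, while its infimum $B_{\min}$ is the sum $E_1+E_2$ of the Dirichlet energies of the harmonic extensions of $v_0$ into $U_1$ and $U_2$. Because $f'(B) = \frac{2c}{((a+2c+b)+B)^2}$ has the sign of $c$, the supremum equals $f(B_{\min})$ when $c<0$, equals $\lim_{B\to\infty}f(B)=1$ when $c>0$, and $f\equiv 1$ when $c=0$. This monotonicity alone already produces the $\max(1,\cdot)$ structure of the claimed formula.

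The crux — and the step I expect to be the main obstacle — is to show that $B_{\min}=a+b$, i.e.\ that the minimal extension energies into $U_1,U_2$ coincide with $\int_{U_{1,2}}|\nabla v_i^h|^2$. This is precisely where the L-shape geometry is used. I would invoke the reflection $R:(x,y)\mapsto(-x,y)$, which maps $U_1$ isometrically onto $U_{1,2}$, fixes the shared edge $\{0\}\times(0,1)$ together with its data $v_0$, and sends the three edges of $U_1$ lying on $\partial\Om$ onto the three edges of $U_{1,2}$ carrying zero data for $v_1^h$. Since $R$ is an isometry and harmonicity is reflection-invariant, the harmonic extension into $U_1$ is exactly $v_1^h\circ R$, so $E_1=a$; the analogous reflection $(x,y)\mapsto(x,-y)$ gives $E_2=b$. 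Substituting $B_{\min}=a+b$ yields $f(B_{\min}) = 1 + \frac{-2c}{2(a+b+c)} = \frac{a+b}{a+b+c}$.

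Finally I would assemble the cases. Combining $f(B_{\min})=\frac{a+b}{a+b+c}$ (valid for $c<0$) with the value $1$ obtained for $c\ge 0$, and noting that $\frac{a+b}{a+b+c}>1$ if and only if $c<0$, shows that both branches are captured by $\max\!\left(1,\frac{a+b}{a+b+c}\right)$, which is the asserted identity. A routine point to dispatch en route is positivity of the denominators: by Cauchy–Schwarz $|c|\le\sqrt{ab}\le\tfrac{1}{2}(a+b)$, so $a+b+c\ge\tfrac{1}{2}(a+b)>0$ unless $v_0$ is trivial, ensuring all fractions are well defined.
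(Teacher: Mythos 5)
Your proposal is correct and follows essentially the same route as the paper's proof: reduce the supremum to a monotone scalar function of the exterior energy $\int_{\Omega\setminus U_{1,2}}|\nabla v|^2\dx$, split into cases according to the sign of $\int_{U_{1,2}}\nabla v_1^h\cdot\nabla v_2^h\dx$, and use the reflections $(x,y)\mapsto(-x,y)$ and $(x,y)\mapsto(x,-y)$ to identify the minimal exterior energy with $\int_{U_{1,2}}|\nabla v_1^h|^2+|\nabla v_2^h|^2\dx$. Your explicit parametrization $f(B)$ and the Cauchy--Schwarz/Young positivity check are just slightly more detailed renderings of the steps the paper carries out implicitly.
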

\begin{proof}
	The analysis of the supremum depends on the sign of $\int_{U_{1,2}}\nabla v_1^h\cdot v_2^h\dx$. As the term appearing in the denominator is non-negative, if $\int_{U_{1,2}}\nabla v_1^h\cdot v_2^h\dx\geq 0$, then the expression is bounded by $1$, and by taking $\int_{\Omega\setminus U_{1,2}}|\nabla v|^2\dx$ large, we see that the supremum of 1 is attainable. Assuming $\int_{U_{1,2}}\nabla v_1^h\cdot v_2^h\dx<0$, then we have that the expression is a decreasing function of $\int_{\Omega\setminus U_{1,2}}|\nabla v|^2\dx$, thus the maximiser corresponds to the minimiser of the seminorm. The restriction that $v\in V_2$ prescribes boundary conditions, and by a similar argument as before we have that on $U_1$, $v$ must satisfy the PDE 
	\begin{equation}
		\begin{array}{r l l}
			\Delta v=&0 & \text{ in } U_{1}\\
			v= & v_0 & \text{ on } \Omega_1\cap\partial U_1\\
			v = & 0 & \text{ on } \Omega_2\cap\partial U_1 \text{ and } \partial\Omega\cap\partial U_{1}. 
		\end{array}
	\end{equation}
	weakly. Due to symmetry considerations, as $U_1=\{(-x,y):(x,y)\in U_{1,2}\}$, with the correspodning mapping preserving boundary conditions, we then have that $v(x,y)=v_1^h(-x,y)$ on $U_1$. Similarly, we have that $v(x,y)=v_2^h(x,-y)$ on $U_2$. In particular, $\int_{\Omega\setminus U_{1,2}}|\nabla v|^2\dx = \int_{U_{1,2}}|\nabla v_1^h|^2+|\nabla v_2^h|^2\dx$. Thus 
	{\footnotesize \begin{equation}
		\begin{split}
			\sup \limits_{v\in V_2} \left\lbrace1+\frac{-2\int_{U_{1,2}}\nabla v_1^h\cdot\nabla v_2^h\dx}{\int_{U_{1,2}}|\nabla (v_1^h+v_2^h)|^2\dx + \int_{\Omega\setminus U_{1,2}}|\nabla v|^2\dx} \right\rbrace &= \max\left(1,1+\frac{-2\int_{U_{1,2}}\nabla v_1^h\cdot\nabla v_2^h\dx}{\int_{U_{1,2}}|\nabla (v_1^h+v_2^h)|^2+|\nabla v_1^h|^2+|\nabla v_2^h|^2\dx}\right)\\
			= \max\left(1,1+\frac{-2\int_{U_{1,2}}\nabla v_1^h\cdot\nabla v_2^h\dx}{2\int_{U_{1,2}}|\nabla v_1^h|^2+|\nabla v_2^h|^2+\nabla v_1^h\cdot\nabla v_2^h\dx}\right) &=  \max\left(1,\frac{\int_{U_{1,2}}|\nabla v_1^h|^2+|\nabla v_2^h|^2\dx}{\int_{U_{1,2}}|\nabla v_1^h|^2+|\nabla v_2^h|^2+\nabla v_1^h\cdot\nabla v_2^h\dx}\right)
		\end{split}
	\end{equation}}
\end{proof}

Then, our main theorem is the following: 
\begin{theorem}\label{thmConstantL}
	For the L-shape domain, $\xi \left( \{H_i\}_{i=1}^2\right)\leq \sqrt{2}$.
\end{theorem}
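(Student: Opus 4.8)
The plan is to treat Theorem~\ref{thmConstantL} as the capstone of the chain of propositions already proved in this appendix, which have reduced the operator norm to a ratio of Dirichlet energies of harmonic functions on the overlap $U_{1,2}$, and then to close with a single elementary inequality. Recall that $\xi^2=\sup_{v\in\Hoo}\frac{\|v_1\|^2+\|v_2\|^2}{\|v\|^2}$, where $(v_1,v_2)$ is the minimal-energy decomposition of $v$. Organising this supremum hierarchically—first fixing $v$ on $\Omega\setminus U_{1,2}$ (the first maximisation proposition), then fixing only the trace of $v$ on $\partial U_{1,2}$ (the second maximisation proposition)—and finally taking the remaining supremum over admissible traces $v_0$, I obtain
\begin{equation}
\xi^2=\sup_{v_0}\max\left(1,\frac{\int_{U_{1,2}}|\nabla v_1^h|^2+|\nabla v_2^h|^2\dx}{\int_{U_{1,2}}|\nabla v_1^h|^2+|\nabla v_2^h|^2+\nabla v_1^h\cdot\nabla v_2^h\dx}\right),
\end{equation}
where $v_1^h,v_2^h$ are the harmonic functions of~\eqref{eq:laplaciansv1v2} determined by $v_0$. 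It therefore suffices to bound the second argument of the maximum by $2$, uniformly in $v_0$.

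Abbreviating $a=\int_{U_{1,2}}|\nabla v_1^h|^2\dx$, $b=\int_{U_{1,2}}|\nabla v_2^h|^2\dx$ and $c=\int_{U_{1,2}}\nabla v_1^h\cdot\nabla v_2^h\dx$, the goal collapses to the single inequality $\frac{a+b}{a+b+c}\le 2$. The key point is that the (possibly negative) cross term $c$ is controlled by the exact identity $a+b+2c=\int_{U_{1,2}}|\nabla(v_1^h+v_2^h)|^2\dx\ge 0$, which gives $a+b+c\ge\tfrac12(a+b)\ge 0$. In the degenerate case $a+b=0$ both $v_i^h$ vanish and the expression already returns the value $1$; otherwise the denominator is strictly positive and
\begin{equation}
\frac{a+b}{a+b+c}\le\frac{a+b}{\tfrac12(a+b)}=2.
\end{equation}
Hence the maximum is bounded by $\max(1,2)=2$ for every $v_0$, so $\xi^2\le 2$ and $\xi\le\sqrt2$.

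I expect no genuine obstacle remains at this stage: the substantive work—writing down the minimal-energy decomposition explicitly (Proposition~\ref{prop:app3}), reducing both outer maximisations to the harmonic extensions $v_1^h,v_2^h$, and exploiting the reflective symmetries $U_1=\{(-x,y):(x,y)\in U_{1,2}\}$ and $U_2=\{(x,-y):(x,y)\in U_{1,2}\}$ to evaluate $\int_{\Omega\setminus U_{1,2}}|\nabla v|^2\dx=a+b$—has already been dispatched by the preceding propositions. The only step requiring care is recognising that the bound must come from the nonnegativity of $\int|\nabla(v_1^h+v_2^h)|^2$ (equivalently, a one-sided Cauchy--Schwarz estimate $c\ge-\sqrt{ab}\ge-\tfrac12(a+b)$) rather than from naive cancellation, since $c$ need not be positive. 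Once this identity is in hand the conclusion $\xi\le\sqrt2$ is immediate.
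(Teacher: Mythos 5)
Your proposal is correct and follows essentially the same route as the paper: the same hierarchical reduction through the two maximisation propositions to the ratio $\frac{a+b}{a+b+c}$ over harmonic extensions on $U_{1,2}$, followed by the bound $c\geq-\tfrac{1}{2}(a+b)$. The only (immaterial) difference is that you obtain this bound by expanding $\int_{U_{1,2}}|\nabla(v_1^h+v_2^h)|^2\dx\geq 0$, whereas the paper derives the equivalent two-sided estimate via Cauchy--Schwarz and Young's inequality and then uses monotonicity of $x\mapsto(1+x)^{-1}$; your one-sided version (and explicit handling of the degenerate case $a+b=0$) is, if anything, marginally cleaner.
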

\begin{proof}
	Due to the preceding propositions, we have that 
	\begin{equation}
	 \xi \left( \{H_i\}_{i=1}^2\right)^2=\max\limits_{v_0 \in H^\frac{1}{2}(U_{1,2})}\max\left(1,\frac{\int_{U_{1,2}}|\nabla v_1^h|^2+|\nabla v_2^h|^2\dx}{\int_{U_{1,2}}|\nabla v_1^h|^2+|\nabla v_2^h|^2+\nabla v_1^h\cdot\nabla v_2^h\dx}\right)
	\end{equation}
	we then have that 
	\begin{equation}
		\begin{split}
			\frac{\int_{U_{1,2}}|\nabla v_1^h|^2+|\nabla v_2^h|^2\dx}{\int_{U_{1,2}}|\nabla v_1^h|^2+|\nabla v_2^h|^2+\nabla v_1^h\cdot\nabla v_2^h\dx}=&\left(1+\frac{\int_{U_{1,2}}\nabla v_1^h\cdot\nabla v_2^h\dx}{\int_{U_{1,2}}|\nabla v_1^h|^2+|\nabla v_2^h|^2\dx}\right)^{-1}
		\end{split}
	\end{equation}
	By a combination of Cauchy-Schwartz and Young's inequality we see, 
	\begin{equation}\label{eqCSYoung}
		|\langle v_1^h,v_2^h\rangle_{H^1}|\leq  \|v_1^h\|_{H^1}\|v_2^h\|_{H^1}\leq \frac{1}{2}\left(\|v_1^h\|_{H^1}^2+\|v_2^h\|_{H^1}^2\right), 
	\end{equation}
	so that $\frac{\int_{U_{1,2}}\nabla v_1^h\cdot\nabla v_2^h\dx}{\int_{U_{1,2}}|\nabla v_1^h|^2+|\nabla v_2^h|^2\dx} \in \left(-\frac{1}{2},\frac{1}{2}\right)$. Then, as $x\mapsto (1+x)^{-1}$ is a decreasing function on $\left(-\frac{1}{2},\frac{1}{2}\right)$, we have that 
	\begin{equation}
		\left(1+\frac{\int_{U_{1,2}}\nabla v_1^h\cdot\nabla v_2^h\dx}{\int_{U_{1,2}}|\nabla v_1^h|^2+|\nabla v_2^h|^2\dx}\right)^{-1}\leq \left(1-\frac{1}{2}\right)^{-1}=2. 
	\end{equation}
	Thus $ \xi \left( \{H_i\}_{i=1}^2\right)^2\leq \max(1,2)=2$, giving the result. 
\end{proof}

We note via the proof of Theorem~\ref{thmConstantL}, that the constant $\sqrt{2}$ may not indeed be sharp. The inequalities in \eqref{eqCSYoung} are equalities if and only if $v_1^h=\pm v_2^h$. This, however, is incompatible with the boundary conditions prescribed on $v_1^h$ and $v_2^h$.

 \end{document}